\newtheorem{THEOREM}{Theorem}
\newtheorem{thm}{Theorem}[section]
\newtheorem{lem}[thm]{Lemma}
\newtheorem{cor}[thm]{Corollary}
\newtheorem{prop}[thm]{Proposition}
\newtheorem{defn}[thm]{Definition}
\newtheorem{rmk}[thm]{Remark}
\numberwithin{equation}{thm}
\newenvironment{red}
{\relax\color{red}}
{\hspace*{.5ex}\relax}
\newcommand{\ber}{\begin{red}}
\newcommand{\er}{\end{red}}
\newcommand\Z{\mathbb{Z}}
\newcommand\N{\mathbb{N}}
\newcommand\F{\mathbb{F}}
\newcommand\Hom{\operatorname{Hom}}
\newcommand\End{\operatorname{End}}
\newcommand\Span{\operatorname{span}}
\newcommand\Rad{\operatorname{Rad}}
\newcommand\Soc{\operatorname{Soc}}
\newcommand\GL{\operatorname{GL}}
\newcommand{\isom}{\,\raise2pt\hbox{$\underrightarrow{\sim}$}\,}
\begin{document}

\setlength{\baselineskip}{4.9mm}
\setlength{\abovedisplayskip}{4.5mm}
\setlength{\belowdisplayskip}{4.5mm}

%%%%%%%%%%%%%%%%%%%%%%%%%%%%%%%%%%%%%%%%%%%%

\renewcommand{\theenumi}{\roman{enumi}}
\renewcommand{\labelenumi}{(\theenumi)}
\renewcommand{\thefootnote}{\fnsymbol{footnote}}
\renewcommand{\thefootnote}{\fnsymbol{footnote}}
\parindent=20pt

%%%%%%%%%%%%%%%%%%%%%%%%%%%%%%%%%%%%%%%%%%%%

\setcounter{section}{0}

%%%%%%%%%%%%%%%%%%%%%%%%%%%%%%%%%%%%%%%%%%%%

%%%%%%%%%%%%%%%%%%%%%%%%%%%%%%%%%%%%%%%%%%%%%%%%%%%%%%%%%%%%%%%

%%%%%%%%%%%%%%%%%%%%%%%%%%%%%%%%%%%%%%%%%%%%%%%%%%%%%%%%%%%%%%%

\title{Tame block algebras of Hecke algebras of classical type}
\author{Susumu Ariki}

\address{S.Ariki : Department of Pure and Applied Mathematics, Graduate School of Information Science and Technology, Osaka University, 1-5 Yamadaoka, Suita, Osaka 565-0871, Japan}
\subjclass[2010]{20C08, 20C33, 16G20, 16G60}

\maketitle 

\begin{center}
{\it On the occasion of Professor Richard Dipper's retirement}
\end{center}

\begin{abstract}
We classify tame block algebras of Hecke algebras of classical type over an algebraically closed field of characteristic not equal to two. 
\end{abstract}

\section{Introduction}

Hecke algebras associated with finite Weyl groups have been studied intensively in the past several decades because of its importance in Lie theory. 
In the modular representation theory of finite groups of Lie type over algebraically closed fields of non-defining characteristic, 
they appear as the endomorphism algebras of the modules which are Harish-Chandra induced from the cuspidal modules of Harish-Chandra series. 
Utilizing the modular representation theory of Hecke algebras of type $A$ developed in \cite{DJ86(b)} and \cite{DJ87}, 
Richard Dipper in the papers \cite{D85I}, \cite{D85II}, and Gordon James in the paper \cite{J86}, 
gave the classification of irreducible modules of $\GL_n(q)$ in the non-defining characteristic case. 
The bijection between the two labels was established in \cite{DJ86(a)}. Then, they introduced the $q$-Schur algebra, which is an algebra defined from 
the Hecke algebras of type $A$, and showed that the modular representation theory of the $q$-Schur algebra knows the decomposition numbers of 
$\GL_n(q)$ in the non-defining characteristic case \cite{DJ89}. The relationship between the module category of the $q$-Schur algebra and 
the module category of the finite general linear group in non-defining characteristics is given in \cite{BDK01} via the cuspidal algebras.

This success motivated Dipper and James to study the modular representation theory of Hecke algebras in other types and 
studied Hecke algebras of type $B$ in \cite{DJ92} and \cite{DJM95}. However, the study of the modular representation theory of Hecke algebras of type $B$ 
required various new ideas, and the Lascoux-Leclerc-Thibon conjecture allowed the author to contribute the later development in \cite{Ar96}, \cite{AM00}, \cite{Ar01(b)} etc.
See survey papers \cite{Ar01(a)} and \cite{Ar08}.
\footnote{It was expected that the James' conjecture together with the solution of the Lascoux-Leclerc-Thibon conjecture would allow us 
to give formulas for the decomposition numbers in certain stable region of parameters. However, due to
counterexamples by Geordie Williamson, we cannot expect any reasonable 
answer to the decomposition number problem at this moment.}
Nevertheless, the modular representation theory of Hecke algebras itself is still far from well-understood, 
and little has been done to explore relationship between subquotient categories of the module category or the derived category of a finite group of Lie type and those of Hecke algebras, neither. 

In this paper, we consider tame block algebras of Hecke algebras of classical type. Recall that Drozd's dichotomy theorem tells us that 
we have to choose among the stages either
\begin{itemize}
\item[(a)]
studying representations over algebras of tame representation type, or
\item[(b)]
finding results on Grothendieck group level such as character formulas, or studying relationship between various subquotient categories using 
modules with good properties,
\end{itemize}
because we cannot expect detailed study of the module categories for algebras of wild representation type. 

Our recent results \cite[Theorem A, B]{Ar17} give criteria to tell the representation type of block algebras of Hecke algebras of classical type and 
the purpose of this article is to work more to determine 
Morita equivalence classes of tame block algebras, so that we have settled the stage (a) for Hecke algebras of classical type in principle, 
and answer the decomposition number problem for tame block algebras as a corollary. 

We note that although the definition of the Hecke algebras of classical type is very simple, 
the proof of Theorem A and Theorem B requires combination of various results in the development of the theory of cyclotomic quiver Hecke algebras, which are also called cyclotomic Khovanov-Lauda-Rouquier algebras: results by Brundan-Kleshchev \cite{BK09(a)}, \cite{BK09(b)}, Chuang-Rouquier \cite{CR08}, Kang-Kashiwara \cite{KK11}, \cite{Kash11}, together with classical results by Rickard \cite{R89(b)} and Krause \cite{Kr97}. 

For finite representation type, we have already proved the following Theorem C. For the proof, the cellularity plays an important role, and block algebras of Hecke algebras are known to be cellular by the old results of Dipper, James and Murphy I have mentioned above and by Geck's result \cite{G07}. 
Because of the cellularity, we may also speak of decomposition numbers, and if we know the decomposition numbers, we may give dimension formulas for 
irreducible modules. 

\begin{THEOREM}[{\cite[Theorem C]{Ar17}}]\label{Thm1}
Suppose that $B$ is a block algebra of Hecke algebras of classical type over an algebraically closed field of characteristic not equal to two. If $B$ is of finite representation type, then 
$B$ is a Brauer line algebra, that is, a Brauer tree algebra whose Brauer tree is a straight line without exceptional vertex. In particular, the decomposition matrix is of the 
following form. 
\[
\begin{pmatrix}
1 &    &            & \\
1 & 1 &             & \\
   & 1 &  \ddots & \\
   &    & \ddots  & 1 \\
   &    &             & 1
\end{pmatrix}
\]
\end{THEOREM}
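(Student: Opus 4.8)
The plan is to combine the structure theory of representation-finite symmetric algebras with an explicit computation of the decomposition matrix coming from the categorification picture. First I would record that every block algebra $B$ of a Hecke algebra of classical type is a symmetric algebra, since the defining Hecke algebra carries a symmetrizing trace form that restricts to each block, and $B$ is indecomposable by the definition of a block. Granting finite representation type, the classification of representation-finite self-injective algebras (Riedtmann and successors), specialized to the symmetric case in characteristic $\neq 2$ where no non-standard algebras occur, shows that $B$ is Morita equivalent to a Brauer tree algebra; equivalently, its stable Auslander-Reiten quiver has tree class $A_n$. Thus the problem reduces to identifying the Brauer tree: I must show that it is a line and that it carries no exceptional vertex.

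The key point is that the derived-equivalence machinery is by itself blind to the shape of the tree. Indeed, by the Brundan-Kleshchev graded isomorphism \cite{BK09(a)}, \cite{BK09(b)} I would realize $B$ as a cyclotomic Khovanov-Lauda-Rouquier algebra $R^\Lambda_\beta$, and the $\Fsl_2$-categorification of Chuang and Rouquier \cite{CR08}, together with the cyclotomic categorification of Kang and Kashiwara \cite{KK11}, \cite{Kash11} and Rickard's derived equivalences for Brauer tree algebras \cite{R89(b)}, produces derived (hence, $B$ being self-injective, stable) equivalences relating $B$ to a standard small block of the same defect. But all Brauer tree algebras with the same number of edges and the same exceptional multiplicity are derived equivalent, so this step alone cannot distinguish a line from, say, a star, nor detect the exceptional multiplicity. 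The tree shape must therefore be read off Morita-invariant data.

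That data is the decomposition matrix, which is available because each block of a Hecke algebra of classical type is cellular (Dipper-James-Murphy and Geck \cite{G07}); cellularity also supplies the relation $C = D^{\mathsf T} D$ between the Cartan and decomposition matrices. The criteria of Theorem A and Theorem B of \cite{Ar17} force a finite-type block to have defect at most one, so the relevant canonical basis vectors in the level one or level two Fock space are the elementary ``ladder'' ones, and by Ariki's categorification theorem \cite{Ar96} the entries of $D$ are their coefficients evaluated at $v = 1$. The computation then yields exactly the bidiagonal staircase matrix displayed in the statement. Finally I would translate this back: a staircase decomposition matrix has $e$ columns and $e+1$ rows, each row supported on at most two consecutive columns with entry $1$. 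Since the number of ordinary characters ($e+1$) exceeds the number of modular irreducibles ($e$) by one, the exceptional multiplicity equals $1$; and since each row has at most two nonzero entries, every vertex of the Brauer tree has degree at most two, so the tree is a line. Hence $B$ is a Brauer line algebra without exceptional vertex, as claimed.

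The main obstacle is this last chain, namely extracting a Morita statement from tools that a priori yield only derived or stable equivalences. Concretely, the hard part is the decomposition-number computation: one must control the canonical basis of the Fock space well enough in the defect-one case to prove that $D$ is exactly the staircase, and not merely staircase-like up to an unknown exceptional multiplicity, and one must verify that among Brauer tree algebras the line with trivial multiplicity is pinned down uniquely by this matrix, its planar embedding being essentially unique. This is where the cellular structure and Ariki's theorem, rather than the categorical equivalences, carry the real weight of the argument.
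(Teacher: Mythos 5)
Your argument runs in the opposite direction from the paper's. The paper takes the Brauer-line structure as the primary fact---the statement is quoted from \cite[Theorem C]{Ar17}, and the paper notes that its proof leans on cellularity (Dipper--James--Murphy, Geck \cite{G07}) together with the structure theory of representation-finite symmetric algebras---and then obtains the decomposition matrix as a purely arithmetic consequence: once $B$ is known to be a Brauer line algebra, its Cartan matrix $C$ is the tridiagonal matrix with $2$'s on the diagonal and $1$'s beside it, and $D$ is determined modulo row permutation by $D^TD=C$. This is exactly how the paper's section on decomposition numbers treats Theorem \ref{Thm1}, in parallel with the tame algebras $A(2,1,2)$, $A(2,2,1)$ and $A(2,2,2)$. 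You instead propose to compute $D$ first, via Fock-space canonical bases and the categorification theorem, and to reconstruct the tree from $D$ afterwards. That inversion is legitimate in principle, and your observation that derived or stable equivalences alone cannot pin down the shape of the tree is exactly right; but it loads the entire weight of the theorem onto an explicit canonical-basis computation, at the very point where the paper needs nothing more than linear algebra.

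Two steps in your chain have genuine holes. First, Ariki's theorem \cite{Ar96} identifies decomposition numbers with canonical-basis coefficients at $v=1$ over fields of characteristic zero; over a field of odd positive characteristic---which the statement must cover---the decomposition matrix is the characteristic-zero one multiplied by an adjustment matrix, and you never argue that this adjustment matrix is trivial for the blocks at hand. For defect-one blocks this can be proved, but it is an additional argument, not a citation of \cite{Ar96} or \cite{BK09(b)}. Second, blocks of type $D$ are not cyclotomic (quiver) Hecke algebras: they are fixed-point subalgebras $H_n(q,1)^\sigma$, so the Brundan--Kleshchev isomorphism and the Fock-space combinatorics do not apply to them directly. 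This is precisely why the present paper devotes a separate Clifford-theory argument (Theorem \ref{restriction}, Proposition \ref{type D indec}) to type $D$ even in the tame case; your proposal silently omits this case. A smaller, patchable point: your final translation from the staircase $D$ to ``line with no exceptional vertex'' invokes the dictionary between rows of $D$ and vertices of the Brauer tree, which is a fact about blocks with cyclic defect groups, not about abstract cellular symmetric algebras; the safe route is through the Cartan matrix, namely that $D^TD$ tridiagonal with diagonal entries $2$ forces both endpoints of every edge to have multiplicity one (since $C_{ii}=m(u)+m(v)$) and every vertex to have degree at most two, which is what actually yields the line.
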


For tame block algebras of Hecke algebras of classical type over an algebraically closed field of characteristic not equal to two, we classify their
Morita equivalence classes in this paper. This has become possible by the confirmation  in \cite{Ar17} of the author's conjecture that tame block algebras are 
Brauer graph algebras.\footnote{We expect that this remains true for wider class of cyclotomic Hecke algebras, or cyclotomic quiver Hecke algebras.} 
Then, applying recent results from the representation theory of Brauer graph algebras, we obtain the following result. 
Theorem \ref{Thm2} says that even though there are infinitely many tame block algebras, their Morita equivalence classes are very restricted. 

\begin{THEOREM}\label{Thm2}
Suppose that $B$ is a block algebra of Hecke algebras of classical type over an algebraically closed field of characterisitc not equal to two. 
If $B$ is of tame representation type and not of finite representation type, then 
$B$ is Morita equivalent to one of the algebras below.
\begin{itemize}
\item[(1)]
For Hecke algebras of type $A$ and type $D$, Brauer graph algebras whose Brauer graph are one of the following.
\[
\begin{xy}
(0,0) *[o]+[Fo]{2}="A", (10,0) *[o]+[Fo]{\hphantom{2}}="B", (20,0) *[o]+[Fo]{2}="C",
\ar@{-} "A";"B"
\ar@{-} "B";"C"
\end{xy}\quad\quad
\begin{xy}
(0,0) *[o]+[Fo]{2}="A", (10,0) *[o]+[Fo]{2}="B", (20,0) *[o]+[Fo]{\hphantom{2}}="C",
\ar@{-} "A";"B"
\ar@{-} "B";"C"
\end{xy}
\]
They occur only when the quantum characteristic is $e=2$.
\item[(2)]
For Hecke algebras of type $B$ with two parameters, either 
\begin{itemize}
\item[(a)]
the Brauer graph algebras in (1), or the symmetric Kronecker algebra, which is the Brauer graph algebra with one non-exceptional vertex and one loop,  
if  the quantum characteristic $e=2$, or
\item[(b)]
the Brauer graph algebra whose Brauer graph is 
\[
\begin{xy}
(0,0) *[o]+[Fo]{2}="A", (10,0) *[o]+[Fo]{2}="B", (20,0) *[o]+[Fo]{2}="C",
\ar@{-} "A";"B"
\ar@{-} "B";"C"
\end{xy}
\]
if the quantum characteristic is $e\ge4$ and $Q=-1$.
%%
%% The proof of \cite[Prop.11.5]{Ar17}. See p.840 of the paper.
%%
\end{itemize}
\end{itemize}
\end{THEOREM}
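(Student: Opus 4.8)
The plan is to bootstrap from the two inputs supplied by \cite{Ar17}: the representation-type criteria of Theorems A and B, and the structural theorem that every tame block algebra of a Hecke algebra of classical type is a Brauer graph algebra. The first reduces the classification to a finite bookkeeping problem by cutting the admissible blocks down to those of minimal positive complexity beyond the Brauer-line blocks of Theorem \ref{Thm1}; the second converts the remaining work into the purely combinatorial task of pinning down a Brauer graph together with its vertex multiplicities. The overall architecture is therefore: enumerate the tame non-finite blocks, compute their Morita-invariant data, and then match that data against the graphs in (1) and (2).

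First I would apply \cite[Theorem A, B]{Ar17} to list, in each classical type and for each admissible pair $(e,Q)$, exactly the blocks that are tame but not representation-finite. I expect this list to be governed by a single small numerical invariant (a weight or defect), so that for types $A$ and $D$ one is forced to $e=2$, while for type $B$ with two parameters one must separate the case $e=2$ from the case $e\ge 4$ even with $Q=-1$; this is precisely where the second parameter genuinely intervenes. For each surviving block $B$ I would then compute the invariants that are preserved under Morita equivalence: the number $\ell(B)$ of simple modules, which equals the number of edges of the Brauer graph, together with the Cartan matrix, the total dimension, and the decomposition numbers. Here I would access the decomposition matrix through the cellular structure (Dipper--James--Murphy and Geck \cite{G07}) and control these numbers in types $B$ and $D$ through the crystal and categorification machinery behind \cite{BK09(a),BK09(b),CR08,KK11,Kash11}.

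Since $B$ is already known to be a Brauer graph algebra, this data determines the graph: $\ell(B)$ fixes the number of edges (two in the path cases, one in the loop case), the Cartan determinant and the total dimension fix the vertex multiplicities—forcing the exceptional vertices of multiplicity $2$ and thereby distinguishing the $(2,1,2)$ and $(2,2,1)$ graphs of (1) from the $(2,2,2)$ graph of (2b)—and the Ext-quiver separates a genuine loop, which yields the symmetric Kronecker algebra with its single simple module, from an ordinary edge joining two distinct vertices. Finally I would invoke the classification of representation-finite and tame Brauer graph algebras, together with the fact that two Brauer graph algebras are Morita equivalent exactly when their Brauer graphs are isomorphic as graphs with multiplicities and cyclic orderings. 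For the small graphs at issue the cyclic ordering at each vertex is forced, so the assembled invariants determine the Brauer graph, hence the Morita equivalence class, and one reads off precisely the algebras listed in (1) and (2). This simultaneously explains why infinitely many blocks collapse to finitely many Morita classes: the relevant invariants take only finitely many values among tame blocks.

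I expect the main obstacle to be the middle step—extracting the \emph{precise} Brauer graph, in particular the placement of the multiplicity-$2$ vertices and the detection of a loop versus an ordinary edge, from the intrinsic invariants of $B$—since this demands explicit control of the decomposition numbers of these tame blocks in each classical type rather than merely their representation type. A secondary but essential difficulty is ensuring completeness and irredundancy: one must verify that Theorems A and B leave no further tame families, and that none of the resulting Brauer graphs are accidentally isomorphic, so that the stated list is both exhaustive and without repetition.
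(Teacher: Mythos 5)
Your proposal has the right starting point (the representation-type criteria and the Brauer-graph structure coming from \cite{Ar17} together with \cite{AZ17}), but its central step is circular, and this is a genuine gap rather than a technical obstacle. You propose to pin down the Brauer graph of a tame block $B$ by directly computing Morita invariants of $B$: the Cartan matrix, the total dimension and, above all, the decomposition numbers, via cellularity and ``the crystal and categorification machinery.'' For blocks over a field of arbitrary characteristic $\ne 2$ there is no a priori access to these numbers: canonical-basis methods compute decomposition numbers only in characteristic zero, adjustment matrices in positive characteristic are unknown in general (the paper's footnote on Williamson's counterexamples is precisely about this), and the paper obtains the decomposition matrices of the tame blocks as a \emph{corollary} of the Morita classification (via $D^TD=C$ once the Morita class, hence the Cartan matrix $C$, is known), not as an input to it. The paper's actual proof never computes any invariant of the unknown block $B$: by \cite{Ar17} and \cite{CR08}, $B$ is derived equivalent to one of the listed Brauer graph algebras; these are tilting discrete by Theorem \ref{Aihara thm}; and Theorem \ref{unique Morita class}, resting on iterated irreducible tilting mutation (Theorem \ref{silting connected}), reduces the classification to the finite explicit computations of Section 5, which show that mutation of $A(2,2,2)$, $A(2,2,1)$, $A(2,1,2)$ and the symmetric Kronecker algebra never produces an endomorphism algebra outside this set. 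Nothing in your proposal plays the role of this ``the derived class contains only these Morita classes'' argument; knowing that $B$ is a Brauer graph algebra and that the Brauer graph is a Morita invariant does not identify which graph $B$ has. Note in particular that $A(2,2,1)$ and $A(2,1,2)$ are derived equivalent with the same coarse invariants (same number of edges and same multiset of multiplicities), so genuinely Morita-level data is indispensable --- and that is exactly the data your method cannot produce without already knowing the answer.

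A second gap is type $D$. You treat types $A$ and $D$ uniformly, but the input from \cite{Ar17} used in the paper (affine Weyl group orbit representatives, hence derived equivalence to the listed Brauer graph algebras) covers types $A$ and $B$ only, and the categorification machinery does not apply directly to type $D$, which is the fixed-point subalgebra $H_n(q,1)^\sigma$ rather than a cyclotomic (quiver) Hecke algebra. The paper needs a separate argument for this: Clifford theory for the order-two automorphism $\sigma$, Hu's crystal-theoretic formula for $\lambda\mapsto h(\lambda)$, and an explicit check, orbit by orbit, that $h(\lambda)\ne\lambda$ for every simple in the covering type $B$ block, so that these simples do not split on restriction (Proposition \ref{type D indec}); only then does the type $B$ classification descend to type $D$. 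Your proposal contains no counterpart of this step.
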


To prove Theorem \ref{Thm2} for type $A$ and type $B$, we use the silting theory initiated in \cite{AI12}. Then, the result is obtained by simple application of 
recent development by Takuma Aihara and his collaborators\footnote{The author is grateful to Dr. Ryoichi Kase for drawing his attention to Aihara's work. }
in \cite{AAC15}, \cite{A13}, \cite{A15} and \cite{AM17}, 
because the tame block algebras are derived equivalent to Brauer graph algebras by \cite[Theorem A, B]{Ar17}. 
We note that Brauer graph algebras are symmetric special biserial algebras and the class of Brauer graph algebras is closed under 
derived equivalence if the ground field is algebraically closed of characterisitc not equal to two by \cite{AZ17}. 
For Hecke algebras of type $D$, we need a little more extra argument to obtain the result. We embed Hecke algebras of type $D$ to 
Hecke algebras of type $B$ with a special choice of two parameters and 
use Specht module theory in the language of Kashiwara crystal to control the branching rule and prove that 
irreducible modules remain irreducible under the restriction from the Hecke algebras of type $B$ to type $D$. 

As a consequence of Theorem \ref{Thm2}, we can determine the decomposition numbers for tame block algebras. 
The result also shows that the Morita classes in the derived equivalence classes of the tame block algebras all appear as tame block algebras again. 
As one can expect, this is no more true for wild block algebras. We give an example in the last section. 

\section{Preliminaries}

Throughout the paper, $K$ is an algebraically closed field of characterisitc not equal to two. The \textbf{Hecke algebra of type $A$} is the $K$-algebra $H^A_n(q)$, where $1\ne q\in K^\times$,  
defined by generators $T_1,\dots,T_{n-1}$ and relations
\begin{gather*}
(T_i-q)(T_i+1)=0\;\;(1\le i\le n-1),\quad  T_iT_j=T_jT_i\;\;(j\ge i+2) \\
 T_iT_{i+1}T_i=T_{i+1}T_iT_{i+1}\;\; (1\le i\le n-2).
\end{gather*}
We call $e=\min\{ k\in\N \mid 1+q+\cdots+q^{k-1}=0\;\text{in $K$.}\}$ the \textbf{quantum chracteristic}. 
Let $\mathfrak{g}(A^{(1)}_{e-1})$ be the affine Kac-Moody Lie algebra of type $A^{(1)}_{e-1}$, $\{ \Lambda_i \mid i\in \Z/e\Z\}$ the fundamental weights. 
Then, block algebras of $H^A_n(q)$ $(n=0,1,2,\dots)$ are labeled by the weights of the integrable module $V(\Lambda_0)$. 

The \textbf{Hecke algebra of type $B$} is the $K$-algebra $H_n(q,Q)$, where $1\ne q\in K^\times$ and $Q\in K^\times$,  
defined by generators $T_0, T_1,\dots,T_{n-1}$ and relations
\begin{gather*}
(T_0-Q)(T_0+1)=0,\;\; (T_i-q)(T_i+1)=0\;\;(1\le i\le n-1)\\
(T_0T_1)^2=(T_1T_0)^2,\quad T_iT_j=T_jT_i\;\;(j\ge i+2) \\
 T_iT_{i+1}T_i=T_{i+1}T_iT_{i+1}\;\; (1\le i\le n-2).
\end{gather*}
If $-Q\not\in q^{\Z}$, block algebras are Morita equivalent to tensor product algebras of two block algebras of type $A$ by \cite{DJ92}. 
If $-Q=q^s$, for some $0\le s\le e-1$, 
block algebras are labeled by the weights of the integrable module $V(\Lambda_0+\Lambda_s)$ by \cite{LM07}. 

In type $A$ and type $B$, the affine Weyl group, which is the affine symmetric group generated by Coxeter generators $\{ s_i \mid i\in \Z/e\Z\}$, 
acts on the weights of $V(\Lambda_0)$ and $V(\Lambda_0+\Lambda_s)$. Then, 
block algebras in the same affine Weyl group orbit are mutually derived equivalent by \cite{CR08}. 

The \textbf{Hecke algebra of type $D$} is the $K$-algebra $H^D_n(q)$, where $1\ne q\in K^\times$,  
defined by generators $T_0, T_1,\dots,T_{n-1}$ and relations
\begin{gather*}
(T_i-q)(T_i+1)=0\;\;(0\le i\le n-1),\quad T_0T_i=T_iT_0\;\;(i\ne 2)\\
T_0T_2T_0=T_2T_0T_2, \quad T_iT_j=T_jT_i\;\;(j\ge i+2\ge3) \\
 T_iT_{i+1}T_i=T_{i+1}T_iT_{i+1}\;\; (1\le i\le n-2).
\end{gather*}

Modules are always assumed to be finite dimensional right modules. We call block algebras which are of tame representation type and not of finite representation type simply 
tame block algebras.

\section{Silting theory}

We assume that the reader is familiar with the various theories for Hecke algebras arising from the categorification of integrable modules over the affine Kac-Moody Lie algebra of type 
$A^{(1)}_{e-1}$. However, since experts in the modular representation theory of Hecke algebras are not familiar with new development of the silting theory, 
we briefly review the theory in this section. 

\subsection{}
We start with the definition of silting object and basic properties.

\begin{defn}
An object $X$ of a triangulated category $\mathcal{T}$ is a \textbf{silting object} if 
\begin{itemize}
\item[(i)]
$\Hom_{\mathcal{T}}(X,X[i])=0$, for all $i>0$.
\item[(ii)]
If an additive full subcategory $\mathcal{C}$ of $\mathcal{T}$ satisfies the conditions
\begin{itemize}
\item[(a)]
$\mathcal{C}$ is closed under isomorphism, shift, taking mapping cone, and
\item[(b)]
all the objects of ${\rm add}(X)$ are objects of $\mathcal{C}$.
\end{itemize} 
then we must have $\mathcal{C}=\mathcal{T}$.
\end{itemize}
Furthermore, if indecomposable direct summands of $X$ are pairwise non-isomorphic, then $X$ is called a \textbf{basic silting object}.
If the condition {\rm(i)} is replaced with 
\begin{itemize}
\item[(i)']
$\Hom_{\mathcal{T}}(X,X[i])=0$, for all $i\ne 0$.
\end{itemize}
then $X$ is called a \textbf{tilting object}.
\end{defn}

If a triangulated category $\mathcal{T}$ admits a silting object, then, as the authors of \cite{AI12} pointed out in Remark 2.9 of  their paper, 
the isomorpphism classes of $\mathcal{T}$ form a set by \cite[Prop.2.17]{AI12}. Hence, set theoretical issues do not arise, and we denote the set 
of isomorphism classes of basic silting objects by $Silt(\mathcal{T})$. 
For a finite dimensional algebra $A$, we denote $Silt(K^b(proj(A)))$ by $Silt(A)$. We call silting (resp. tilting) objects 
silting (resp. tilting) complexes when $\mathcal{T}=K^b(proj(A))$. 

The following lemma characterizes tilting complexes among silting complexes. 

\begin{lem}[{\cite[Thm.A.4]{A13}}]
Let $A$ be a finite dimensional selfinjective algebra.  Then, a silting complex $T$ is a tilting complex if and only if $\nu(T)\simeq T$, where $\nu$ is the Nakayama functor.
\end{lem}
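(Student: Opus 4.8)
The plan is to derive everything from the single structural fact that, for a finite dimensional selfinjective algebra $A$, the Nakayama functor $\nu=\Hom_K(\Hom_A(-,A),K)$ is a \emph{Serre functor} on $\mathcal{T}=K^b(proj(A))$: it is a triangulated autoequivalence admitting natural isomorphisms
$$\Hom_{\mathcal{T}}(X,Y)\simeq \Hom_K\bigl(\Hom_{\mathcal{T}}(Y,\nu X),K\bigr)\quad(X,Y\in\mathcal{T}).$$
Writing $D=\Hom_K(-,K)$ and specializing to $X=T$, $Y=T[i]$, this gives the one identity on which both implications will rest,
$$\Hom_{\mathcal{T}}(T,T[i])\simeq D\,\Hom_{\mathcal{T}}(T,\nu T[-i])\quad(i\in\Z).$$
I will also use that $\nu$, being an autoequivalence, carries silting complexes to silting complexes.

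For the ``if'' direction I would assume $\nu T\simeq T$. Since $T$ is silting we already know $\Hom_{\mathcal{T}}(T,T[i])=0$ for $i>0$, so only $i<0$ needs treatment. For such $i$ the displayed identity together with $\nu T\simeq T$ yields
$$\Hom_{\mathcal{T}}(T,T[i])\simeq D\,\Hom_{\mathcal{T}}(T,\nu T[-i])\simeq D\,\Hom_{\mathcal{T}}(T,T[-i]),$$
and the right hand side vanishes because $-i>0$. Hence $\Hom_{\mathcal{T}}(T,T[i])=0$ for all $i\ne0$, so $T$ is tilting. This half is purely formal.

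For the ``only if'' direction I would assume $T$ is tilting. The same identity then gives $\Hom_{\mathcal{T}}(T,\nu T[j])=0$ for all $j\ne0$ simultaneously. Next I would pass to $B=\End_{\mathcal{T}}(T)$ and invoke Rickard's theorem: the tilting complex $T$ induces a triangulated equivalence $G\colon \mathcal{T}\isom K^b(proj(B))$ with $G(T)\simeq B$. Transporting $\nu$ along $G$ endows $K^b(proj(B))$ with a Serre functor, which forces $B$ to be selfinjective (projectives and injectives must coincide) and identifies the transported functor with the Nakayama functor $\nu_B$ of $B$. By uniqueness of Serre functors $G\nu\simeq\nu_B G$, whence
$$G(\nu T)\simeq \nu_B(GT)\simeq \nu_B(B)\simeq DB.$$
Because $B$ is selfinjective, $\nu_B$ merely permutes the indecomposable projective $B$-modules, so $DB\simeq B$ as right $B$-modules; therefore $G(\nu T)\simeq B\simeq G(T)$, and applying $G^{-1}$ gives $\nu T\simeq T$.

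The hard part will be this ``only if'' direction, namely upgrading the Hom-space vanishing $\Hom_{\mathcal{T}}(T,\nu T[j])=0$ $(j\ne0)$ to an object-level isomorphism $\nu T\simeq T$. The crux is knowing that the endomorphism algebra $B$ of a tilting complex over a selfinjective algebra is again selfinjective and that $\nu$ corresponds to $\nu_B$; once that is granted, the elementary isomorphism $DB\simeq B$, valid for every selfinjective algebra, closes the argument. If one prefers to avoid naming $\nu_B$, the same conclusion follows by computing directly that $G(\nu T)$ has cohomology concentrated in degree $0$ equal to $DB$, and that any complex in $K^b(proj(B))$ with this property is isomorphic to the stalk $DB\simeq B$.
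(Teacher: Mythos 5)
The paper itself gives no argument here: it quotes the result from \cite{A13}, so the relevant comparison is with Aihara's proof of Theorem A.4 there. Your ``if'' direction is fine and agrees with that proof: Serre duality $\Hom_{\mathcal{T}}(T,T[i])\simeq \Hom_K(\Hom_{\mathcal{T}}(T,\nu T[-i]),K)$ for the Nakayama functor on $K^b(proj(A))$ kills the negative-shift Hom spaces once $\nu T\simeq T$. The ``only if'' direction, however, has a genuine gap, and it sits exactly at the step you yourself flagged as the crux. You assert that transporting $\nu$ along $G$ gives a Serre functor on $K^b(proj(B))$ which ``forces $B$ to be selfinjective (projectives and injectives must coincide).'' That implication is false: by Happel's theorem, $K^b(proj(B))$ admits a Serre functor if and only if $B$ is Gorenstein, not selfinjective. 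Any non-semisimple algebra of finite global dimension --- a hereditary path algebra, say --- has $K^b(proj(B))=D^b(mod(B))$ with Serre functor $\nu_B$, yet is not selfinjective. What the transported Serre functor actually yields is only that $DB\simeq G(\nu T)$ is a perfect complex, i.e.\ that $DB$ has finite projective dimension; it does not yield that $DB$ is projective.

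Worse, in this situation the selfinjectivity of $B$ is not an auxiliary input but is \emph{equivalent} to the statement being proved: from your own identification $G(\nu T)\simeq\nu_B(B)=DB$, the algebra $B$ is selfinjective iff $DB\in{\rm add}(B)={\rm add}(G(T))$, iff $\nu T\in{\rm add}(T)$, which for basic $T$ is precisely $\nu T\simeq T$. So the argument begs the question; your fallback (``$G(\nu T)$ is a stalk complex with cohomology $DB\simeq B$'') relies on the same unproved isomorphism $DB\simeq B$. To repair it you would have to invoke, as an external theorem, that derived equivalence preserves selfinjectivity --- true, but nontrivial, and its proof contains exactly this difficulty. (A smaller inaccuracy: even granting selfinjectivity, one only gets $DB\in{\rm add}(B)$; the isomorphism $DB\simeq B$ of right modules is the Frobenius property, which requires $T$, hence $B$, to be basic.) Aihara's proof avoids the endomorphism algebra entirely: if $T$ is tilting, Serre duality gives $\Hom_{\mathcal{T}}(T,\nu T[i])=0=\Hom_{\mathcal{T}}(\nu T,T[i])$ for all $i\ne0$, so $T\oplus\nu T$ is a silting complex containing the silting complex $T$ as a direct summand; since all basic silting complexes in $K^b(proj(A))$ have the same number of indecomposable direct summands by \cite{AI12}, this forces ${\rm add}(\nu T)={\rm add}(T)$ and hence $\nu T\simeq T$ in the basic case. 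That argument stays inside $K^b(proj(A))$ and needs no Rickard-type transport.
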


\begin{cor}\label{silting equals tilting}
Let $A$ be a finite dimensional symmetric algebra. Then, any silting complex is a tilting complex. 
\end{cor}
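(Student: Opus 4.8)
The plan is to derive the corollary directly from the preceding lemma (\cite[Thm.A.4]{A13}), whose hypothesis only requires $A$ to be selfinjective. Since every finite dimensional symmetric algebra is in particular selfinjective, the lemma applies to $A$, and it reduces the problem to showing that the condition $\nu(T)\simeq T$ is automatic for \emph{every} silting complex $T$ when $A$ is symmetric.

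\medskip
\noindent
First I would recall what it means for $A$ to be symmetric: there is an isomorphism of $A$-bimodules $A\cong \Hom_K(A,K)$, equivalently a symmetric nondegenerate associative bilinear form on $A$. The key consequence is that the Nakayama functor $\nu=-\otimes_A^{\mathbb{L}}\Hom_K(A,K)$ is isomorphic to the identity functor on $K^b(proj(A))$, since $\Hom_K(A,K)\cong A$ as bimodules. I would spell this out: on the level of projective modules $\nu$ sends $P(i)$ to $P(\sigma(i))$ where $\sigma$ is the Nakayama permutation, and for a symmetric algebra $\sigma=\id$, so $\nu\simeq\id$ as functors on $proj(A)$ and hence on the bounded homotopy category $K^b(proj(A))$.

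\medskip
\noindent
Given that $\nu\simeq\id$, for any silting complex $T$ we trivially have $\nu(T)\simeq T$, so the characterization in the lemma forces $T$ to be a tilting complex. This completes the argument: every silting complex over a symmetric algebra satisfies the tilting criterion.

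\medskip
\noindent
The main (and essentially only) obstacle here is the verification that $\nu$ acts as the identity on $K^b(proj(A))$, which rests on unwinding the definition of the Nakayama functor together with the bimodule isomorphism $A\cong\Hom_K(A,K)$; once this is in place the corollary is immediate from the lemma. Everything else is formal, so I would keep the proof to a sentence or two invoking the lemma and the symmetry of $A$.
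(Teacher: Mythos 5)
Your proof is correct and is exactly the argument the paper intends (the corollary is stated without proof as an immediate consequence of the lemma): symmetric implies selfinjective, and the bimodule isomorphism $A\cong\Hom_K(A,K)$ makes the Nakayama functor $\nu$ isomorphic to the identity on $K^b(proj(A))$, so the lemma's criterion $\nu(T)\simeq T$ holds automatically. One small caution: the full bimodule isomorphism is what you need here, since the weaker statement that the Nakayama permutation is trivial (weak symmetry) only controls $\nu$ on indecomposable projectives, not on arbitrary complexes.
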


\noindent
As we work with finite dimensional symmetric algebras only, all the silting complexes we will consider 
are tilting complexes. The next theorem is well-known.

\begin{thm}[{\cite{R89(a)}\cite{R91}}]\label{Rickard Morita theorem}
Let $A$ and $B$ be finite dimensional selfinjective algebras. Then, they are derived equivalent if and only if there exists a tilting complex $T$ such that 
$B\simeq \End_{K^b(proj(A))}(T)$. Furthermore, there exists a complex of bimodules $X$ in $D^b(B\text{-}mod\text{-}A)$, which is called a two-sided tilting complex, 
such that the derived tensor product with $X$ over $B$ gives the equivalence $D^b(mod(B))\simeq D^b(mod(A))$ which sends the stalk complex $B$ to the tilting complex $T$.
\end{thm}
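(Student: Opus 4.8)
The plan is to establish the two implications of the criterion and then the ``furthermore'' rigidification separately, following Rickard; I note at the outset that selfinjectivity plays no role in this general statement and could be dropped (it is needed elsewhere, e.g.\ for Corollary~\ref{silting equals tilting}).

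First I would treat the easy direction. Suppose a triangle equivalence $F\colon D^b(mod(A))\to D^b(mod(B))$ is given and set $T=F^{-1}(B)$, the preimage of the regular module viewed as a stalk complex. Since the tilting axioms are phrased entirely in triangulated terms, they transport across $F$: one has $\Hom_{D^b(A)}(T,T[i])\cong\Ext^i_B(B,B)$, which vanishes for $i\ne0$ and equals $B$ for $i=0$, giving condition~(i)$'$ together with $\End(T)\cong B$. Moreover $B$ generates the perfect complexes $K^b(proj(B))$ as a thick subcategory, and an equivalence preserves compact objects and thick closures, so $T$ lies in and generates $K^b(proj(A))$. Hence $T$ is a tilting complex with $\End_{K^b(proj(A))}(T)\cong B$.

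The substance is the converse. Given a tilting complex $T$ with $\End_{K^b(proj(A))}(T)=B$, I must manufacture an equivalence. The cleanest argument passes to the endomorphism DG algebra $E=\mathbf{R}\Hom_A(T,T)$: condition~(i)$'$ forces $H^i(E)=\Hom_{D^b(A)}(T,T[i])=0$ for $i\ne0$, so $E$ is formal, quasi-isomorphic to its degree-zero cohomology $B$. Condition~(ii) says $T$ generates $K^b(proj(A))$, i.e.\ $A\in\operatorname{thick}(T)$, so $T$ is a compact generator; the functor $\mathbf{R}\Hom_A(T,-)$ then identifies $D^b(mod(A))$ with the derived category of the formal DG algebra $E\simeq B$, namely $D^b(mod(B))$. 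Rickard's original route avoids DG language and works inside the homotopy category $K^b(proj(A))$, building the functor by an explicit inductive construction on the additive subcategories generated by $T$ and verifying compatibility with triangles directly.

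Finally, the ``furthermore'' clause upgrades this abstract equivalence to a \emph{standard} one, $-\otimes^{\mathbf L}_B X$ for a two-sided tilting complex $X\in D^b(B\text{-}mod\text{-}A)$. This is the step I expect to be the main obstacle: the isomorphism $\End(T)\cong B$ equips $T$ with a $B$-action only up to homotopy, so $T$ is not literally a complex of $B$-$A$-bimodules, and a triangulated equivalence is in general neither determined by, nor representable by, a bimodule complex. The resolution is to rectify the homotopy action into a strict one by replacing $T$ with a quasi-isomorphic complex of bimodules; formality of $E$ is exactly what kills the higher homotopy-coherence obstructions and makes this rectification possible. The resulting $X$ has underlying right $A$-complex quasi-isomorphic to $T$, whence $B\otimes^{\mathbf L}_B X\simeq T$, so the stalk complex $B$ is sent to $T$, and a generation-plus-full-faithfulness check confirms that $-\otimes^{\mathbf L}_B X$ is the sought equivalence.
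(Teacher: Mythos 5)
The paper does not prove this theorem at all: it is quoted as well known, with \cite{R89(a)} supplying the tilting-complex criterion and \cite{R91} the two-sided tilting complex, so your reconstruction can only be measured against the literature, not against a proof in the text. Your outline is correct, and it follows the Keller-style DG route rather than Rickard's original arguments. For the hard direction you pass to the endomorphism DG algebra $E=\mathbf{R}\Hom_A(T,T)$, note that concentration of cohomology in degree $0$ yields quasi-isomorphisms $E\supseteq\tau_{\le 0}E\to H^0(E)=B$, and use that $T$ is a compact generator; the rectification in the ``furthermore'' step is then the concrete construction $X=B\otimes^{\mathbf L}_{\tau_{\le 0}E}T$, where the \emph{strict} $\tau_{\le 0}E$-action on $T$ replaces the merely-up-to-homotopy $B$-action --- this is what your phrase about formality killing the coherence obstructions amounts to, and it is worth writing down explicitly, since ``formality'' by itself does not hand you a bimodule complex. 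Rickard's own proofs are genuinely different: \cite{R89(a)} builds the equivalence inside the homotopy category by an inductive procedure starting from ${\rm add}(T)$, and \cite{R91} avoids rectification altogether by tensoring up to the enveloping-algebra level (this is where projectivity over the ground ring enters), obtaining an equivalence between derived categories of bimodules and defining $X$ as the image of the regular bimodule, the delicate point being that the one-sided restriction of $X$ is $T$. Two places where your sketch needs tightening: first, in the easy direction $D^b(mod(A))$ is not a compactly generated category in the technical sense, so instead of ``equivalences preserve compact objects'' you should invoke the intrinsic characterization of perfect complexes ($X$ is perfect if and only if for every $Y$ one has $\Hom(X,Y[i])=0$ for $i\gg 0$); second, your equivalence is constructed on unbounded derived categories of all modules, so you need the short but necessary argument that it restricts to $D^b(mod(A))\simeq D^b(mod(B))$, e.g.\ via the characterization of these subcategories inside the big ones by finite-dimensionality of $\bigoplus_i\Hom(T,X[i])$. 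Your remark that selfinjectivity is irrelevant to this particular statement is correct; the paper imposes it only because that is the setting needed for the surrounding results.
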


\subsection{}
Silting objects in a triangulated category are related to each other by silting mutation. 

\begin{defn}
Let $\mathcal{C}$ be an additive category, $X$ and $M$ objects of $\mathcal{C}$. We say that a morphism $f: X\to Y$ is the \textbf{left ${\rm add}(M)$-approximation} of $X$ if 
$Y\in {\rm add}(M)$ and $\Hom(f,U): \Hom_{\mathcal{C}}(Y, U)\to \Hom_{\mathcal{C}}(X, U)$ is surjective,
for all $U\in {\rm add}(M)$. 

If $f$ is further left minimal, that is, if $g\in\Hom_{\mathcal{C}}(Y,Y)$ that satisfies $g\circ f=f$ 
is always an automorphism, we say that $f: X\to Y$ is the \textbf{minimal left ${\rm add}(M)$-approximation} of $X$. 
\end{defn}

\begin{defn}
Let $A$ be a  finite dimensional algebra, and let $T$ be a silting complex. 
We choose an indecomposable direct summand $X$, and write $T=X\oplus M$. We denote by $X\to Y$
the minimal left ${\rm add}(M)$-approximation of $X$, and extend it to a triangle
$X\to Y\to Z \to X[1]$. Then, define $\mu_X(T)=Z\oplus M$ and call $\mu_X(T)$ the \textbf{irreducible left silting mutation} of $T$. 
\end{defn}

\begin{rmk}
For a silting complex $T$, $\mu_X(T)$ is a silting complex by \cite[Thm.2.31]{AI12}. 
For a tilting complex $T$, $\mu_X(T)$ is not necessarily a tilting complex, but if we choose the indecomposable direct summand $X$ to be such that $\nu(X)\simeq X$, 
then $\mu_X(T)$ is a tilting complex by \cite[Lem.5.2]{CKL15}.
\end{rmk}

\begin{rmk}
Replacing the minimal left ${\rm add}(M)$-approximation by the minimal right ${\rm add}(M)$-approximation, we define the \textbf{irreducible right silting mutation}.
\end{rmk}

\begin{thm}[{\cite[Thm.2.11]{AI12}}]
For $T_1, T_2\in Silt(\mathcal{T})$, we write $T_1\ge T_2$ if 
\[
\Hom_{\mathcal{T}}(T_1, T_2[i])=0,\quad \text{for all $i>0$.}
\]
Then, $Silt(\mathcal{T})$ is a partially ordered set. 
\end{thm}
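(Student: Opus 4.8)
The plan is to verify the three defining properties of a partial order on $Silt(\mathcal{T})$: reflexivity, transitivity and antisymmetry. Reflexivity is immediate, since $T\ge T$ is precisely condition (i) in the definition of a silting object. The whole weight of the argument falls on the other two properties, and I would reduce both to a single one-sided generation statement, which I regard as the main obstacle.

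The generation statement I would establish is the following: for silting complexes $S$ and $T$, the relation $T\ge S$ holds if and only if $T$ can be obtained from the shifted copies $S[j]$ with $j\le 0$ by forming finitely many mapping cones and passing to direct summands (\emph{without} using shifts in the other direction); dually, $S\ge T$ holds if and only if $T$ is so obtained from the $S[j]$ with $j\ge 0$. The forward implication is the hard one. Its proof uses the generation axiom (ii): because $S$ is silting, every object of $\mathcal{T}$, and in particular $T$, is built from the full collection of shifts $S[j]$, $j\in\Z$; the hypothesis $\Hom_{\mathcal{T}}(T,S[i])=0$ for all $i>0$ then allows one to truncate such a presentation, discarding the contributions of the shifts $S[j]$ with $j>0$. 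In the language of co-t-structures, $S$ determines a bounded co-t-structure on $\mathcal{T}$ whose co-heart is ${\rm add}(S)$ and whose two aisles are exactly the two subcategories appearing above; the condition $T\ge S$ places $T$ in one aisle, and the truncation triangle attached to $T$ has its complementary summand annihilated by the Hom-vanishing. Producing this truncation, and checking its boundedness, is where axiom (ii) and the finiteness of the complexes are genuinely used.

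Granting the generation statement, transitivity is short. Assume $T_1\ge T_2$ and $T_2\ge T_3$. By the statement, $T_1$ is built from the complexes $T_2[j]$ with $j\le 0$. Applying the cohomological functor $\Hom_{\mathcal{T}}(-,T_3[i])$ to the triangles that assemble $T_1$, inducting on their number, and noting that the vanishing is inherited by direct summands, it is enough to verify the vanishing on each building block: for $i>0$ and $j\le 0$ one has $\Hom_{\mathcal{T}}(T_2[j],T_3[i])=\Hom_{\mathcal{T}}(T_2,T_3[i-j])=0$, because $i-j\ge i>0$ and $T_2\ge T_3$. Hence $\Hom_{\mathcal{T}}(T_1,T_3[i])=0$ for every $i>0$, that is $T_1\ge T_3$.

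For antisymmetry, suppose $T_1\ge T_2$ and $T_2\ge T_1$. The two halves of the generation statement show that $T_1$ is built both from $\{T_2[j]\mid j\le 0\}$ and from $\{T_2[j]\mid j\ge 0\}$; equivalently, $T_1$ lies in both aisles of the co-t-structure determined by $T_2$, hence in its co-heart ${\rm add}(T_2)$. Thus every indecomposable summand of $T_1$ is a summand of $T_2$. Exchanging the roles of $T_1$ and $T_2$ gives the reverse inclusion, so ${\rm add}(T_1)={\rm add}(T_2)$; since both complexes are basic, each indecomposable occurs exactly once in each, and therefore $T_1\simeq T_2$. Together with reflexivity and transitivity, this shows that $\ge$ is a partial order on $Silt(\mathcal{T})$.
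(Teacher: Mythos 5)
The paper itself contains no proof of this statement: it is quoted verbatim from Aihara--Iyama \cite{AI12}, so the only comparison available is with the proof there. Your proposal is essentially a reconstruction of that proof: reflexivity from axiom (i); a one-sided generation statement (their Proposition 2.4) proved by truncating, via the Hom-vanishing, a bounded presentation of $T$ supplied by the silting axiom (ii); transitivity by applying $\Hom_{\mathcal{T}}(-,T_3[i])$ to the assembling triangles; and antisymmetry by showing that an object lying in both one-sided closures lies in $\mathrm{add}(T_2)$, whence $T_1\simeq T_2$ by basicness. The architecture and all the individual reductions are the right ones.

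There is, however, one point where your formulation, read literally, fails: the generating operation must be \emph{extensions}, not \emph{mapping cones}. Any class containing $0$ and closed under mapping cones is automatically closed under the positive shift, since $X[1]=\mathrm{cone}(X\to 0)$; consequently the closure of $\{S[j]\mid j\le 0\}$ under cones and summands contains all shifts of $S$ and is therefore the whole of $\mathcal{T}$ by axiom (ii), so your ``if and only if'' would assert that $T\ge S$ always holds (false already for $T=S[1]$). The correct operation is the $*$-construction: if $X\to E\to Y\to X[1]$ is a triangle with $X$, $Y$ already built, then $E$ is built. This distinction is not cosmetic, because your transitivity induction depends on it: for an extension triangle the long exact sequence $\Hom(Y,Z)\to\Hom(E,Z)\to\Hom(X,Z)$ passes the vanishing to the middle term, whereas for a cone $X\to Y\to E\to X[1]$ the relevant outer term is $\Hom(X[1],Z)=\Hom(X,Z[-1])$, which is not controlled by the hypothesis $i>0$. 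Two further steps are asserted rather than proved, and both rest on the same splitting mechanism: in the truncation, a component $T\to S'[\ell]$ with $\ell>0$ is zero by hypothesis, so the triangle splits and $S'[\ell]$ can be discarded (here summand-closure of the classes is needed); and in the antisymmetry argument, the claim that the two one-sided closures intersect in $\mathrm{add}(T_2)$ follows by the identical splitting applied to a presentation of minimal length, using $\Hom(T_1,T_2[i])=0$ for $i>0$. With ``cones'' replaced by ``extensions'' and these splittings written out, your proof is complete and coincides with the one in \cite{AI12}.
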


\begin{thm}[{\cite[Thm.2.35, Prop.2.36]{AI12}}]\label{properties of silting order}
Let $A$ be a finite dimensional algebra, and let $T_1$ and $T_2$ be objects of $Silt(A)$. Then, we have the following.
\begin{itemize}
\item[(1)]
If $T_1>T_2$, then there exists an irreducible left silting mutation $T=\mu_X(T_1)$, for an indecomposable direct summand of $T_1$, such that $T_1>T\ge T_2$ holds.
\item[(2)]
The following are equivalent.
\begin{itemize}
\item[(a)]
$T_2$ is an irreducible left silting mutation of $T_1$.
\item[(b)]
$T_1$ is an irreducible right silting mutation of $T_2$.
\item[(c)]
$T_1>T_2$ and there is no silting object $T$ satisfying $T_1>T>T_2$. 
\end{itemize}
\end{itemize}
\end{thm}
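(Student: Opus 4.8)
The plan is to concentrate the real work in part (1) and to derive almost all of part (2) from it together with two short formal computations. I would begin by recording that an irreducible left mutation strictly decreases the order. Writing $T_1 = X \oplus M$ with defining triangle $X \xrightarrow{f} Y \to Z \to X[1]$ and $T_2 = \mu_X(T_1) = Z \oplus M$, I would apply $\Hom_{\mathcal{T}}(X \oplus M, -)$ to this triangle. Using the presilting vanishing $\Hom(T_1, T_1[i]) = 0$ for $i > 0$ together with $Y \in \text{add}(M)$, the long exact sequences yield $\Hom(T_1, Z[i]) = 0$ for $i > 0$, hence $\Hom(T_1, T_2[i]) = 0$, i.e.\ $T_1 \geq T_2$; minimality of $f$ forces $X$ not to be a summand of $Y$, so $Z \not\simeq X$ and $T_1 \neq T_2$, giving $T_1 > T_2$.

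For $\text{(a)} \Leftrightarrow \text{(b)}$ I would show that irreducible left and right mutation at the same indecomposable summand are mutually inverse. Rotating the triangle above to $Y \xrightarrow{g} Z \to X[1] \to Y[1]$, I would check that $g$ is the minimal right $\text{add}(M)$-approximation of $Z$: surjectivity of $\Hom_{\mathcal{T}}(U, -)$ along $g$ for $U \in \text{add}(M)$ follows from $\Hom(M, X[1]) = 0$, and left-minimality of $f$ transfers to right-minimality of $g$. Hence the irreducible right mutation of $T_2$ at $Z$ returns $X \oplus M = T_1$, which is exactly (b); the converse is symmetric.

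The heart is part (1). Given $T_1 > T_2$, I would invoke the bounded co-t-structure on $K^b(\text{proj}\,A)$ generated by the silting object $T_1$, under which every object admits a finite filtration by shifts of objects of $\text{add}(T_1)$, and the relation $T_1 \geq T_2$ restricts the shifts appearing in such a filtration of $T_2$ to one side. Because $T_1 \neq T_2$, a nontrivial shift must genuinely occur; I would select an indecomposable summand $X$ of $T_1$ detecting the first place where $T_2$ leaves $\text{add}(T_1)$, form the approximation triangle for $X$, and compare the two filtrations to show that $T := \mu_X(T_1)$ still satisfies $T \geq T_2$ while $T_1 > T$. The delicate bookkeeping that simultaneously guarantees the existence of such an $X$ (so that $T < T_1$) and that mutating at $X$ does not overshoot $T_2$ (so that $T \geq T_2$ is preserved) is what I expect to be the main obstacle; this is precisely where the co-t-structure machinery of \cite{AI12} is indispensable.

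Finally I would close the logical loop in part (2). The implication $\text{(c)} \Rightarrow \text{(a)}$ is immediate from part (1): it yields $T = \mu_X(T_1)$ with $T_1 > T \geq T_2$, and the absence of any silting object strictly between $T_1$ and $T_2$ forces $T = T_2$. For the remaining claim in $\text{(a)} \Rightarrow \text{(c)}$, namely that nothing lies strictly between $T_1$ and $\mu_X(T_1)$, I would use that all silting complexes in $K^b(\text{proj}\,A)$ share the same number of indecomposable summands and that $T_1 = X \oplus M$ and $\mu_X(T_1) = Z \oplus M$ differ in exactly one summand; reducing along the common presilting summand $M$ collapses the interval to the two-element silting poset of the reduced category, leaving no room for an intermediate complex.
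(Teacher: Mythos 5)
A preliminary remark on the comparison itself: the paper offers no proof of this statement at all --- it is imported verbatim from Aihara--Iyama \cite[Thm.2.35, Prop.2.36]{AI12} and used as a black box --- so your attempt has to be judged as a standalone proof, and as such it is incomplete. The formal parts are essentially right. Your verification that $T_1\ge \mu_X(T_1)$ is correct, though the strictness step should be repaired: rather than ``$X$ is not a summand of $Y$, so $Z\not\simeq X$'', argue that if $Z\simeq X$ then the connecting map $Z\to X[1]$ lies in $\Hom(X,X[1])=0$, so the triangle splits, $X\in{\rm add}(M)$, contradicting basicness. The equivalence (a)$\Leftrightarrow$(b) by rotating the approximation triangle is the standard argument and works (to transfer minimality you also need that $f$ is not a split monomorphism, which again uses $X\notin{\rm add}(M)$), and (c)$\Rightarrow$(a) does follow formally from part (1).

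The two genuine gaps sit exactly where the content of the theorem lies. First, for part (1) --- which you yourself call the heart --- you describe a strategy (filtrations from the co-$t$-structure generated by $T_1$, mutating at the summand $X$ where $T_2$ ``first leaves'' ${\rm add}(T_1)$) and then explicitly concede that the bookkeeping needed to get both $T_1>\mu_X(T_1)$ and $\mu_X(T_1)\ge T_2$ is an obstacle you have not overcome; an acknowledged missing step is a gap, not a proof, and this is precisely the nontrivial approximation-theoretic argument of \cite{AI12}. Second, your proof of (a)$\Rightarrow$(c) invokes a reduction principle: that the interval $\{T\in Silt(A) : T_1\ge T\ge T_2\}$ is identified with the two-element silting poset of a category ``reduced'' along the common summand $M$. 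To apply any such reduction you must first show that every silting object $T$ with $T_1>T>T_2$ contains $M$ as a direct summand. You never establish this (the natural route --- $M\oplus T$ is presilting, and presilting objects in $K^b(proj(A))$ have at most $|A|$ indecomposable summands --- relies on a statement about presilting objects that is itself delicate and not available for free), and this containment is essentially the content of \cite[Prop.2.36]{AI12}, so as written the step is close to circular. Moreover, silting reduction in the generality you invoke is a substantial theorem, not a formal manipulation. Both gaps would need to be filled before this counts as a proof.
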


\begin{defn}
Let $\mathcal{T}$ be a triangulated category which admits a silting object. We say that $\mathcal{T}$ is \textbf{silting discrete} if, for any silting objects $T_1$ and $T_2$ 
which satisfies $T_1\ge T_2$, there exists only finitely many objects $T$ of $Silt(\mathcal{T})$ that satisfy $T_1\ge T\ge T_2$.
\end{defn}

\noindent
The following proposition is easy to prove. 

\begin{prop}[{\cite[Prop.3.8]{A13}}]\label{one silting object suffices}
A triangulated category $\mathcal{T}$ is silting discrete if and only if there exists a basic silting object $A$ of $\mathcal{T}$ such that 
there are only finitely many objects $T$ of $Silt(\mathcal{T})$ that satisfy $A\ge T\ge A[\ell]$, for any $\ell>0$. 
\end{prop}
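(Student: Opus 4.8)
This is stated as an "easy to prove" proposition, so the goal is a clean argument leveraging the silting order structure and mutation theory developed earlier in the section. The forward direction is immediate: if $\mathcal{T}$ is silting discrete, then taking $T_1 = A$ and $T_2 = A[\ell]$ in the definition, and checking $A \ge A[\ell]$, the finiteness of the interval $[A[\ell], A]$ follows directly. The content is all in the converse.

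For the converse, I would argue as follows. Suppose $A$ is a basic silting object such that every interval $[A[\ell], A]$ contains only finitely many silting objects. I want to show $\mathcal{T}$ is silting discrete, i.e. that an \emph{arbitrary} interval $[T_2, T_1]$ with $T_1 \ge T_2$ is finite. The key idea is that $A$ plays the role of a universal reference point: because $A$ generates $\mathcal{T}$ as a triangulated category (condition (ii) in the definition of silting object), any silting object $T$ should be sandwiched between shifts of $A$, that is, $A[-m] \ge T \ge A[m]$ for some $m > 0$ depending on $T$. First I would establish this sandwiching: since $T \in \mathcal{T}$ and $A$ is silting, the complex $T$ is built from $\mathrm{add}(A)$ by finitely many cones and shifts, so the negative $\Hom$-vanishing $\Hom(A[-m], T[i]) = 0$ and $\Hom(T, A[m][i]) = 0$ for $i > 0$ hold once $m$ is large enough to absorb the finite amplitude of $T$ relative to $A$.

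With the sandwiching in hand, given any $T_1 \ge T_2$, I would choose $m$ large enough that both $T_1$ and $T_2$ lie in the interval $[A[m], A[-m]]$. Then the interval $[T_2, T_1]$ is contained in $[A[m], A[-m]]$, so it suffices to show the latter is finite. Now I would use the hypothesis together with the shift-compatibility of the silting order: the interval $[A[m], A[-m]] = [A[-m][2m], A[-m]]$ is a shift of $[A[2m], A]$ by the autoequivalence $[-m]$, and shifting is an order-isomorphism on $Silt(\mathcal{T})$, so the two intervals have the same cardinality. By hypothesis (applied with $\ell = 2m$), $[A[2m], A]$ is finite, hence so is $[A[m], A[-m]]$, and therefore $[T_2, T_1]$ is finite as required.

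\textbf{The main obstacle} is the sandwiching step, namely producing, for a given silting object $T$, an explicit bound $m$ with $A[-m] \ge T \ge A[m]$. This requires knowing that $T$ has finite amplitude as an object generated from $\mathrm{add}(A)$, which uses that $A$ is a silting object (so the smallest thick subcategory containing $A$ is all of $\mathcal{T}$) together with the fact that $T$ itself is a bounded complex; one then tracks how the generation of $T$ from $A$ in finitely many cone-and-shift steps controls the range of shifts of $A$ that have nonzero maps into or out of $T$. Everything else reduces to the formal properties of the partial order in Theorem 3.6 and the elementary observation that the shift functor $[1]$ is an order-automorphism of $Silt(\mathcal{T})$, which follows at once from $\Hom_{\mathcal{T}}(T_1[1], T_2[1][i]) = \Hom_{\mathcal{T}}(T_1, T_2[i])$.
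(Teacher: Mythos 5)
Your proof is correct. Note that the paper itself gives no proof of this proposition --- it is quoted from \cite[Prop.3.8]{A13} with only the remark that it is easy to prove --- so there is nothing in the paper to compare against; your argument is essentially the standard one underlying the cited result. The two ingredients you isolate are exactly the right ones: the shift $[1]$ is an order-automorphism of $Silt(\mathcal{T})$, which turns the hypothesis into finiteness of every interval $[A[m],A[-m]]$, and the sandwiching $A[-m]\ge T\ge A[m]$ for any silting $T$ and $m\gg 0$, which is precisely the Aihara--Iyama observation that any two silting objects have $\Hom$-vanishing in all sufficiently large shifts (proved, as you indicate, by writing $T$ in finitely many cone-and-shift steps from ${\rm add}(A)$ using condition (ii) of the definition of silting object, and tracking the finite loss of vanishing range through the long exact sequences); together with transitivity of the partial order this confines any interval $[T_2,T_1]$ inside a finite one.
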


\begin{cor}
Let $A$ be a finite dimensional algebra, and view $A$ as a complex concentrated in degree zero. If there are only finitely many silting objects $T\in Silt(A)$ that satisfy 
$A\ge T\ge A[\ell]$, for any $\ell>0$, then $K^b(proj(A))$ is silting discrete.
\end{cor}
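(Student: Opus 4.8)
The plan is to deduce the corollary directly from Proposition \ref{one silting object suffices}, taking the distinguished basic silting object to be $A$ itself, viewed as a stalk complex concentrated in degree zero. The essential point is that $A$ really is a silting object of $K^b(proj(A))$, so that the finiteness hypothesis of the corollary is an instance of the existence clause of the proposition.

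First I would check that $A$ is a silting complex. Condition (i) of the definition of silting object holds because $\Hom_{K^b(proj(A))}(A, A[i]) \simeq \Ext^i_A(A,A) = 0$ for all $i \ne 0$, since $A$ is projective over itself; thus $A$ is in fact a tilting complex. The generation condition is immediate, as every object of $K^b(proj(A))$ is obtained from copies of $A$ by shifts and mapping cones. Passing to its basic version $P = \bigoplus_j P_j$, where $P_j$ runs over a complete set of pairwise non-isomorphic indecomposable projective $A$-modules, we obtain a basic silting object with ${\rm add}(P) = {\rm add}(A)$, so that $A$ and $P$ determine the same element of $Silt(K^b(proj(A)))$, and likewise $A[\ell]$ and $P[\ell]$ determine the same element.

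Consequently the interval $\{ T \in Silt(K^b(proj(A))) : A \ge T \ge A[\ell] \}$ coincides with $\{ T : P \ge T \ge P[\ell] \}$, so the hypothesis of the corollary asserts exactly that this set is finite for every $\ell > 0$. This is precisely the condition in Proposition \ref{one silting object suffices} for the basic silting object $P$, and applying that proposition gives that $K^b(proj(A))$ is silting discrete. There is no real obstacle here: the mathematical content is carried entirely by Proposition \ref{one silting object suffices}, and the only step requiring any care is the harmless identification of the algebra $A$ with its basic silting version $P$, which leaves the silting poset and the relevant counting unchanged.
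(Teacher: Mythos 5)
Your proof is correct and is exactly the argument the paper intends: the corollary is stated without proof as an immediate specialization of Proposition \ref{one silting object suffices}, taking the basic silting object to be (the basic version of) the stalk complex $A$, whose silting property and generation of $K^b(proj(A))$ are standard. Your additional care about replacing $A$ by its basic version $P$ with ${\rm add}(P)={\rm add}(A)$ is harmless and fills in the only detail the paper leaves implicit.
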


\noindent
The meaning of finiteness condition in the definition of silting discreteness is the following. 

\begin{thm}[{\cite[Thm.3.5]{A13}}]\label{iterative left mutation}
Let $A$ be a finite dimensional algebra, $T_1$ and $T_2$ objects of $Silt(A)$ which satisfy $T_1\ge T_2$. If the number of objects $T\in Silt(A)$ that satisfy 
$T_1\ge T\ge T_2$ is finite, then $T_2$ is obtained by iterated irreducible left silting mutation from $T_1$.
\end{thm}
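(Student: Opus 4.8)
The plan is to argue by induction on the cardinality $n$ of the finite interval $[T_2,T_1]:=\{T\in Silt(A)\mid T_1\ge T\ge T_2\}$, using part (1) of Theorem \ref{properties of silting order} to produce, at each stage, a single irreducible left silting mutation that moves strictly downward while staying above $T_2$. The base case is $n=1$: then $T_1$ and $T_2$ both lie in a one-element interval, so $T_1=T_2$ and $T_2$ is obtained from $T_1$ by the empty sequence of mutations.

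For the inductive step, suppose $n>1$, so that $T_1\ne T_2$ and hence $T_1>T_2$. By Theorem \ref{properties of silting order}(1) there is an indecomposable direct summand $X$ of $T_1$ such that $T':=\mu_X(T_1)$ satisfies $T_1>T'\ge T_2$. I would then compare the intervals $[T_2,T']$ and $[T_2,T_1]$. Since $\ge$ is transitive, $T'\ge T\ge T_2$ implies $T_1\ge T\ge T_2$, so $[T_2,T']\subseteq[T_2,T_1]$; in particular $[T_2,T']$ is again finite. The containment is strict because $T_1\in[T_2,T_1]$ while $T_1\notin[T_2,T']$: were $T'\ge T_1$, antisymmetry of the silting partial order (Theorem 2.11 of \cite{AI12}) together with $T_1\ge T'$ would force $T_1=T'$, contradicting $T_1>T'$. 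Hence $|[T_2,T']|<n$, and by the induction hypothesis $T_2$ is obtained from $T'$ by iterated irreducible left silting mutation. Prepending the single mutation $T_1\mapsto T'=\mu_X(T_1)$ exhibits $T_2$ as obtained from $T_1$ by iterated irreducible left silting mutation, completing the induction.

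The real content of the argument is supplied entirely by part (1) of Theorem \ref{properties of silting order}, which guarantees that whenever $T_1>T_2$ one can descend by one irreducible left mutation without dropping below $T_2$; conceptually, part (2) identifies these irreducible left mutations with the covering relations of the poset $Silt(A)$, so the assertion amounts to saying that a finite interval admits a saturated descending chain from its top down to $T_2$. The one point that requires care, and the step I expect to be the crux, is the verification that each mutation strictly shrinks the interval so that the process must terminate; this is exactly where the finiteness hypothesis enters, and it rests on the antisymmetry of the silting order to ensure that $T_1$ itself is expelled from the smaller interval at the first step.
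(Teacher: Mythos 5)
Your proof is correct and is essentially the argument behind the cited result: the paper itself states this theorem as a quotation of \cite[Thm.3.5]{A13} without reproducing a proof, and the proof in that source is exactly your induction on the size of the finite interval, descending one step at a time via Theorem \ref{properties of silting order}(1) and using antisymmetry of the silting order to guarantee strict shrinkage. No gaps: the finiteness of the sub-interval and the exclusion of $T_1$ from it are justified precisely as you say.
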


\begin{thm}\label{silting connected}
Let $A$ be a finite dimensional algebra and suppose that $K^b(proj(A))$ is silting discrete. Then, any silting complex is obtained by iterated irreducible left silting mutation
from a shift of the stalk complex $A$. 
\end{thm}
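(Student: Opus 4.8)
The plan is to realize the given silting complex $T$ as lying below a suitable shift of the stalk complex $A$ in the silting order, and then to feed the comparable pair into the hypothesis of silting discreteness together with Theorem \ref{iterative left mutation}, which converts finiteness of an interval into a chain of irreducible left mutations. Thus the whole statement reduces to producing one shift $A[m]$ with $A[m]\ge T$.

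First I would use that $T$, as an object of $K^b(proj(A))$, is a bounded complex of projective modules, so there is an integer $d$ with $T^i=0$ for all $i>d$. I claim the shift $A[-d]$ then dominates $T$, i.e. $A[-d]\ge T$. To check this one computes, for $i>0$,
\[
\Hom_{K^b(proj(A))}(A[-d],T[i])\cong \Hom_{K^b(proj(A))}(A,T[i+d])\cong H^{i+d}(T),
\]
where the first isomorphism is the shift adjunction $\Hom(X[a],Y)\cong\Hom(X,Y[-a])$ and the second uses that $A$ is a projective stalk in degree zero, so that $\Hom_{K^b(proj(A))}(A,C[j])$ computes $H^j(C)$. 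Since $i+d>d$ and $T$ has no terms, hence no cohomology, in degrees exceeding $d$, this group vanishes, giving $A[-d]\ge T$. This is the only genuine computation in the argument.

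With $A[-d]\ge T$ established, I would apply the definition of silting discreteness directly to the comparable pair $(A[-d],T)$: since that definition demands finiteness of the interval for \emph{every} pair of comparable silting objects, it yields in particular that only finitely many basic silting complexes $S$ satisfy $A[-d]\ge S\ge T$. Theorem \ref{iterative left mutation}, applied with $T_1=A[-d]$ and $T_2=T$, then forces $T$ to be obtained from $A[-d]$ by a finite sequence of irreducible left silting mutations. As $A[-d]$ is a shift of the stalk complex $A$, this is exactly the assertion of the theorem. Note that no lower bound of the form $T\ge A[m']$ is needed, since the mutations proceed downward from the dominating object; likewise Proposition \ref{one silting object suffices} is used only to verify discreteness in applications and is not invoked here.

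Since the substantive content has been front-loaded into the cited results, the argument is short, and the one point I would be careful about is the orientation of the shift: bounding the terms of $T$ from above must yield a shift lying \emph{above} $T$ rather than below it. I expect the only real pitfall to be the sign conventions in $\Hom(X[a],Y)\cong\Hom(X,Y[-a])$ and in $\Hom(A,C[j])\cong H^j(C)$, because reversing either would flip the inequality to $T\ge A[-d]$ and make the left-mutation conclusion fail. I would therefore double-check that the computation above delivers the inequality in the direction $A[-d]\ge T$ before concluding.
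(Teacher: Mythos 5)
Your proposal is correct and follows essentially the same route as the paper: produce a shift $A[-\ell]$ dominating $T$ in the silting order, then invoke silting discreteness together with Theorem \ref{iterative left mutation}. The only difference is cosmetic — the paper simply cites the general vanishing $\Hom_{K^b(proj(A))}(X,Y[i])=0$ for $i\gg 0$ to choose $\ell$, whereas you make the bound explicit by identifying $\Hom_{K^b(proj(A))}(A,T[j])$ with $H^j(T)$, which is a valid sharpening of the same step.
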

\begin{proof}
For any objects $X, Y\in K^b(proj(A))$, $\Hom_{K^b(proj(A))}(X,Y[i])=0$, for $i>\!>0$. Thus, we fix a sufficiently large $\ell$, and
$\Hom_{K^b(proj(A))}(A[-\ell],X[i])=0$, for all $i>0$. That is, $A[-\ell]\ge X$ holds. Then, the result follows by Theorem \ref{iterative left mutation}.
\end{proof}

\subsection{}
We return to symmetric algebras. The following is an important application of the silting theory. The argument in the proof is taken from \cite[Thm.5,1]{AM17}. 
For symmetric algebras, we say \textbf{tilting mutation} instead of silting mutation.

\begin{thm}\label{unique Morita class}
Let $A_1,\dots,A_s$ be derived equivalent finite dimensional symmetric algebras, and we identify $\mathcal{T}=K^b(proj(A_i))$, for $1\le i\le s$.  Suppose the following. 
\begin{itemize}
\item[(a)]
The triangulated category $\mathcal{T}$ is tilting discrete.
\item[(b)]
For any $1\le i\le s$ and an indecomposable projective $A_i$-module $X$, we have an isomorphism of algebras 
$\End_{\mathcal{T}}(\mu_X(A_i))\simeq A_j$, for some $1\le j\le s$.
\end{itemize}
Then, any finite dimensional algebra $B$ having derived equivalence $K^b(proj(B))\simeq\mathcal{T}$ is Morita equivalent to $A_i$, for some $1\le i\le s$, 
that is, there is a category equivalence $mod(A_i)\simeq mod(B)$, for some $1\le i\le s$.
\end{thm}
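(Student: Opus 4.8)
The plan is to reduce the classification of Morita classes inside the common derived equivalence class to tracking endomorphism algebras along a single mutation path, and then to run an induction showing that this path never leaves the list $\{A_1,\dots,A_s\}$.

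First, identify $\mathcal{T}=K^b(proj(A_1))$ with $K^b(proj(B))$ via the given derived equivalence. By Theorem \ref{Rickard Morita theorem} there is a tilting complex $T$ in $\mathcal{T}$ with $B\simeq\End_{\mathcal{T}}(T)$, and since only Morita equivalence is at stake I may take $T$ basic. By Corollary \ref{silting equals tilting} silting and tilting complexes coincide for the symmetric algebras at hand, so assumption (a) makes $\mathcal{T}$ silting discrete, and each irreducible silting mutation remains a tilting complex because $\nu(X)\simeq X$ holds automatically over a symmetric algebra. Theorem \ref{silting connected} then exhibits a chain $T_0,T_1,\dots,T_m=T$ in which $T_0$ is a shift of the stalk complex $A_1$ and each step $T_{k+1}=\mu_{X_k}(T_k)$ is an irreducible left mutation at an indecomposable summand $X_k$ of $T_k$. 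Since shifting is an autoequivalence, $\End_{\mathcal{T}}(T_0)\simeq A_1$.

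The core is the inductive claim that $\End_{\mathcal{T}}(T_k)\simeq A_{i_k}$ for some index $i_k$. The base case $k=0$ holds with $i_0=1$. Assuming $\End_{\mathcal{T}}(T_k)\simeq A_{i_k}$, Theorem \ref{Rickard Morita theorem} supplies a triangle equivalence $F\colon K^b(proj(A_{i_k}))\to\mathcal{T}$ sending the stalk complex $A_{i_k}$ to $T_k$. Being a triangle equivalence, $F$ matches the indecomposable summands of the stalk complex, namely the indecomposable projective $A_{i_k}$-modules, with those of $T_k$; write $X_k=F(Y_k)$ for the projective $Y_k$ corresponding to $X_k$. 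Because $F$ preserves minimal left ${\rm add}(M)$-approximations and mapping cones, it intertwines irreducible mutation, giving $T_{k+1}=\mu_{X_k}(T_k)\simeq F(\mu_{Y_k}(A_{i_k}))$. As $F$ is fully faithful, $\End_{\mathcal{T}}(T_{k+1})\simeq\End_{K^b(proj(A_{i_k}))}(\mu_{Y_k}(A_{i_k}))$, which by hypothesis (b) is isomorphic to some $A_j$. This closes the induction, and taking $k=m$ yields $B\simeq\End_{\mathcal{T}}(T)\simeq A_i$, so $B$ is Morita equivalent to $A_i$.

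The main obstacle is the compatibility of irreducible silting mutation with the triangle equivalence $F$: I must check that $F$ carries a minimal left ${\rm add}(M)$-approximation to a minimal left ${\rm add}(F(M))$-approximation and the completing triangle to the completing triangle, so that $F(\mu_{Y_k}(A_{i_k}))$ is genuinely the mutation $\mu_{F(Y_k)}(T_k)$ rather than merely some silting complex sharing its endomorphism algebra. Left minimality is the delicate point, but it is an intrinsic categorical property and hence preserved by equivalences; this is what must be spelled out carefully. A secondary point to handle is the bookkeeping around basic complexes, ensuring the final identification $\End_{\mathcal{T}}(T)\simeq A_i$ yields Morita equivalence of $B$ with $A_i$ rather than a bare algebra isomorphism.
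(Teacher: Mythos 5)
Your proposal is correct and follows essentially the same route as the paper: Rickard's theorem to realize $B$ as $\End_{\mathcal{T}}(T)$ for a tilting complex $T$, tilting discreteness plus Theorem \ref{silting connected} to write $T$ as an iterated irreducible left mutation of (a shift of) the stalk complex $A_1$, and then an induction along the mutation chain in which a derived auto-equivalence from Theorem \ref{Rickard Morita theorem} transports each mutation step to a mutation of a stalk complex so that hypothesis (b) applies. The compatibility of mutation with triangle equivalences that you flag as the delicate point is exactly what the paper also uses (implicitly, in the isomorphism $\End_{\mathcal{T}}(\mu_{X_i}(T_{i-1}))\simeq \End_{\mathcal{T}}(\mu_{F(X_i)}(A_k))$), so your treatment is, if anything, slightly more explicit than the original.
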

\begin{proof}
By Theorem \ref{Rickard Morita theorem}, there is a tilting complex $T\in K^b(proj(A_1))$ such that $B=\End_{\mathcal{T}}(T)$. The condition {\rm(a)} implies that 
$T$ is obtained by iterated irreducible left silting mutation from 
the stalk complex $A_1$, by Theorem \ref{silting connected}, and we write
\[
T\simeq \mu_{X_\ell}\circ\cdots\circ\mu_{X_1}(A_1).
\]
Since $A_1$ is a symmetric algebra, silting complexes are tilting complexes by Corollary \ref{silting equals tilting}, so that 
$T_i=\mu_{X_i}\circ\cdots\circ\mu_{X_1}(A_1)$, for $1\le i\le \ell$, are tilting complexes. 
We show that, for $1\le i\le \ell$, we have $\End_{\mathcal{T}}(T_i)\simeq A_j$, for some $1\le j\le s$. The base $i=1$ is the assumption {\rm (b)}. Suppose that
$\End_{\mathcal{T}}(T_{i-1})\simeq A_k$, for some $1\le k\le s$, holds. Then, Theorem \ref{Rickard Morita theorem} 
implies that there is an auto-equivalence $F:\mathcal{T}\simeq \mathcal{T}$ such that $F(T_{i-1})=A_k$. Hence, we have isomorphisms of finite dimensional algebras
\[
\End_{\mathcal{T}}(T_i)=\End_{\mathcal{T}}(\mu_{X_i}(T_{i-1}))\simeq \End_{\mathcal{T}}(\mu_{F(X_i)}(A_k))
\]
and $\End_{\mathcal{T}}(T_i)\simeq A_j$, for some $1\le j\le s$, by the assumption {\rm (b)} again. 
\end{proof}

As we stated in the introduction, we only need to handle Brauer graph algebras. We define Brauer graph algebras as follows. See \cite{ES17}, for example.

\begin{defn}
A \textbf{Brauer graph} is an undirected graph, which allows loops and multiple edges, such that each vertex $v$ is associated with the multiplicity $m(v)\in\N$, and a cyclic ordering of the edges which have $v$ as an endpoint. Then, the \textbf{Brauer graph algebra} associated with a Brauer graph is defined as follows. 
\begin{itemize}
\item[(a)]
For each vertex $v$, let $\alpha_{v,1}, \cdots, \alpha_{v,c_v}$ be the directed arcs which connect each of the edges in the cyclic ordering around $v$
to the edge which is immediately after the edge in the cyclic ordering. Then, 
\[
\{ \alpha_{v,i} \mid  \text{$v$ is a vertex,}\; 1\le i\le c_v\}
\]
generates the Brauer graph algebra. We call $\alpha_{v,1}, \cdots, \alpha_{v,c_v}$ a \textbf{cycle}. If the cycle starts and ends in $E$, we denote the product 
$\alpha_{v,1}\cdots\alpha_{v,c_v}$ by $C_{E,v}$.
\item[(b)] 

\begin{itemize}
\item[(i)]
If $\alpha_{u,i}\alpha_{v,j}$ is not contained in any cycle, then $\alpha_{u,i}\alpha_{v,j}=0$. 
\item[(ii)]
For the endpoints $u$ and $v$ of an edge $E$, $C_{E,u}^{m(u)}=C_{E,v}^{m(v)}$.
\end{itemize}
\end{itemize}
Note that $\alpha_{v,1}\cdots\alpha_{v,c_v}\alpha_{v,1}=0$ follows from the defining relations. We call vertices of multiplicity strictly greater than one 
\textbf{exceptional vertices}.
\end{defn}

\noindent
Next theorem gives a combinatorial criterion for tilting discreteness of Brauer graph algebras.

\begin{thm}[{\cite[Thm.6.7]{AAC15}}]\label{Aihara thm}
A Brauer graph algebra is tilting discrete if and only if the Brauer graph contains at most one cycle of odd length and no cycle of even length.
\end{thm}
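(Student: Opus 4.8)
The plan is to prove both implications, using the reduction provided by Proposition \ref{one silting object suffices}: since a Brauer graph algebra $A$ is symmetric, silting complexes are tilting complexes by Corollary \ref{silting equals tilting}, and $K^b(proj(A))$ is tilting discrete precisely when, for every $\ell>0$, only finitely many tilting complexes $T$ lie in the interval $A\ge T\ge A[\ell]$. The main technical input I would establish first is that for a Brauer graph algebra, an irreducible tilting mutation $\mu_X(A)$ at an indecomposable projective $X$ again has a Brauer graph algebra as its endomorphism algebra, whose Brauer graph is obtained from the original by an explicit combinatorial move (a flip of the edge corresponding to $X$ in the cyclic orderings around its two endpoints). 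Granting this, the number of tilting complexes in a bounded interval is governed by the combinatorics of iterated flips, and the silting-theoretic machinery of the previous subsection reduces the statement to graph-theoretic bookkeeping.

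For the sufficiency direction, suppose the Brauer graph contains at most one odd cycle and no even cycle. First I would dispose of the tree case: when the graph is a tree, $A$ is a Brauer tree algebra, hence representation-finite and symmetric, and it is known that representation-finite symmetric algebras are silting discrete, so each interval $A\ge T\ge A[\ell]$ is finite. For the case of exactly one odd cycle I would argue by induction on the number of edges. The crucial point is that the flip move associated with tilting mutation, applied under the hypothesis ``at most one odd cycle and no even cycle,'' produces a Brauer graph again satisfying the same hypothesis, so the mutation process never manufactures an even cycle. Combined with Theorem \ref{iterative left mutation} and Theorem \ref{silting connected}, which guarantee that every $T$ in $A\ge T\ge A[\ell]$ is reached from $A$ by finitely many irreducible left mutations, one bounds the length of admissible mutation sequences using the degree bound $\ell$ together with the finiteness of the Brauer graphs reachable by flips, yielding finiteness of the interval.

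For the necessity direction I would prove the contrapositive: if the Brauer graph contains an even cycle, or at least two cycles of odd length, then $A$ is not tilting discrete. The strategy is to exhibit an infinite family of pairwise non-isomorphic tilting complexes lying in some interval $A\ge T\ge A[\ell]$, contradicting Proposition \ref{one silting object suffices}. An even cycle supplies a local Kronecker-type configuration: one identifies two indecomposable projectives whose repeated alternating irreducible mutation produces infinitely many distinct tilting complexes, all confined between $A$ and $A[1]$, the even parity being exactly what prevents the cyclic word from stabilizing. A pair of odd cycles can be glued to create the same effect. Because tilting discreteness is a derived invariant and, by Theorem \ref{Rickard Morita theorem}, is preserved under the derived equivalences arising from mutation, it suffices to locate such a configuration after contracting the tree part of the graph to a derived-equivalent Brauer graph algebra.

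The hardest part will be establishing rigorously that irreducible tilting mutation of a Brauer graph algebra is again a Brauer graph algebra with the predicted flipped graph, and tracking the effect of the flip on cycle parities and on the multiplicities of exceptional vertices. Controlling how multiplicities interact with the move — in particular ensuring that no even cycle is secretly created in the one-odd-cycle case and that the set of graphs reachable by flips stays finite — is where the genuine combinatorial effort lies; once that is in place, Proposition \ref{one silting object suffices} together with Theorems \ref{iterative left mutation} and \ref{silting connected} turns the representation-theoretic question into the finiteness statement about flips described above.
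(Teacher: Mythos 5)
The paper does not actually prove Theorem \ref{Aihara thm}: it is quoted verbatim from \cite[Thm.6.7]{AAC15}, so your proposal can only be measured against the argument given there. Your structural ingredients are the right ones and are exactly what \cite{AAC15} uses: irreducible tilting mutation of a Brauer graph algebra is again a Brauer graph algebra whose graph is a flip (Kauer move) of the original, and the finiteness question should be reduced to combinatorics of these flips. However, your sufficiency argument has a genuine logical gap: it is circular. You appeal to Theorem \ref{iterative left mutation} and Theorem \ref{silting connected} to assert that every tilting complex $T$ with $A\ge T\ge A[\ell]$ is reached from $A$ by finitely many irreducible left mutations, and then propose to bound such mutation sequences. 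But the hypothesis of Theorem \ref{iterative left mutation} is precisely that the interval between $A$ and $A[\ell]$ is finite, and the hypothesis of Theorem \ref{silting connected} is that $K^b(proj(A))$ is silting discrete --- in both cases the very statement you are trying to prove. Without discreteness already in hand, Theorem \ref{properties of silting order}(1) only produces a strictly descending chain $A>T_1>T_2>\cdots\ge T$ of irreducible mutations with no guarantee of termination at $T$, so no bound on the length of ``admissible mutation sequences'' can be extracted this way. The missing engine is the bootstrapping criterion of Aihara--Mizuno used in \cite{AAC15} (cf. \cite{AM17}): a symmetric algebra is tilting discrete if and only if, for every tilting complex $P$ obtained from the stalk complex by iterated irreducible tilting mutation, there are only finitely many tilting complexes $U$ with $P\ge U\ge P[1]$. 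It is this reduction to two-term tilting complexes at every mutated algebra --- combined with the classification of two-term tilting complexes over a Brauer graph algebra by signed walks, whose finiteness is equivalent to ``at most one odd cycle and no even cycle,'' and with the fact that flips preserve this cycle condition --- that makes the proof work. Your proposal contains neither this criterion nor any substitute for it.

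Two smaller points. In the necessity direction, your mechanism (an even cycle, or two odd cycles, supports an infinite family of two-term tilting complexes, contradicting discreteness already in the interval $A\ge T\ge A[1]$) is correct in spirit, but ``contracting the tree part of the graph to a derived-equivalent Brauer graph algebra'' is not a legitimate reduction: derived equivalences between Brauer graph algebras preserve the number of edges (flips do not contract edges), so contraction changes the derived equivalence class. The infinite family must be exhibited on the given graph, which is what the signed-walk classification in \cite{AAC15} accomplishes. Finally, your treatment of the tree case via representation-finiteness of Brauer tree algebras is sound (representation-finite symmetric algebras are tilting discrete by \cite{A13}), but once the two-term criterion is in place this base case is subsumed and not needed separately.
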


\section{Derived equivalence classes of tame block algebras}

In \cite{Ar17}, the author has determined the affine Weyl group orbit representatives of tame block algebras of Hecke algebras of type $A$ and $B$. 
The representatives are given as follows. The result for type $A$ has been known for a long time. 

\begin{itemize}
\item[(1)]
For Hecke algebras of type $A$, Brauer graph algebras whose Brauer graph are one of the following. (Both are in the same affine Weyl group orbit.)
\[
\begin{xy}
(0,0) *[o]+[Fo]{2}="A", (10,0) *[o]+[Fo]{\hphantom{2}}="B", (20,0) *[o]+[Fo]{2}="C",
\ar@{-} "A";"B"
\ar@{-} "B";"C"
\end{xy}\quad\quad
\begin{xy}
(0,0) *[o]+[Fo]{2}="A", (10,0) *[o]+[Fo]{2}="B", (20,0) *[o]+[Fo]{\hphantom{2}}="C",
\ar@{-} "A";"B"
\ar@{-} "B";"C"
\end{xy}
\]
They occur only when the quantum characteristic is $e=2$.
\item[(2)]
For Hecke algebras of type $B$ with two parameters, either 
\begin{itemize}
\item[(a)]
the Brauer graph algebras in (1), or the symmetric Kronecker algebra $K[X,Y]/(X^2, Y^2)$,  
if the quantum characteristic $e=2$, or
\item[(b)]
the Brauer graph algebra whose Brauer graph is 
\[
\begin{xy}
(0,0) *[o]+[Fo]{2}="A", (10,0) *[o]+[Fo]{2}="B", (20,0) *[o]+[Fo]{2}="C",
\ar@{-} "A";"B"
\ar@{-} "B";"C"
\end{xy}
\]
if the quantum characteristic is $e\ge4$ and $Q=-1$.
\end{itemize}
\end{itemize}

Our aim is to prove that they exhaust Morita equivalence classes of tame block algebras in type $A$ and type $B$. Note that they are tilting discrete by 
Theorem \ref{Aihara thm}. Thus, we compute the endomorphism algebras of irreducible left tilting mutation of the above algebras. 
Then, we apply Theorem \ref{unique Morita class} to obtain the desired result. 

\section{Computation of the endomorphism algebras}

\subsection{}
We begin by the symmetric Kronecker algebra. 
We state the following theorem only for the bounded homotopy category of a finite dimensional algebra, but it is proved in more general setting in \cite{AI12}. 

\begin{thm}[{\cite[Thm.2.26]{AI12}}]\label{local algebra case}
Let $A$ be a finite dimensional algebra. 
If $K^b(proj(A))$ has an indecomposable silting complex, then any silting complex is its shift.
\end{thm}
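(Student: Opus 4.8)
The plan is to fix an indecomposable silting complex $M$ and to prove that every basic silting complex $N\in Silt(K^b(proj(A)))$ is a shift $M[n]$, by running an iterated irreducible left mutation which is forced to stay among the shifts of $M$. The first observation I would record is that the irreducible left mutation of an \emph{indecomposable} silting complex is simply its shift by one: if $T=M$ is indecomposable, then in the notation of the definition of $\mu_X$ we have $X=M$ and complement $M'=0$, the minimal left $\mathrm{add}(0)$-approximation of $M$ is the zero map $M\to 0$, and the defining triangle $M\to 0\to Z\to M[1]$ gives $Z\simeq M[1]$, whence $\mu_M(M)=M[1]$. Consequently the only indecomposable summand one can mutate at is $M$ itself, and the mutation orbit of $M$ consists exactly of the shifts $\{M[n]\mid n\in\Z\}$, each of which is again indecomposable.

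Next I would sandwich an arbitrary $N$ between two shifts of $M$. Since any two objects $X,Y$ of $K^b(proj(A))$ satisfy $\Hom(X,Y[i])=0$ for $|i|\gg0$ (the fact already used in the proof of Theorem \ref{silting connected}), for $\ell$ large enough we obtain $\Hom(M,N[i+\ell])=0$ and $\Hom(N,M[i+\ell])=0$ for all $i>0$; that is, $M[-\ell]\ge N$ and $N\ge M[\ell]$ in the silting order. Fixing one such $\ell$ gives $M[-\ell]\ge N\ge M[\ell]$.

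The heart of the argument is then a descent. If $M[-\ell]=N$ we are done; otherwise $M[-\ell]>N$, and Theorem \ref{properties of silting order}(1) provides an irreducible left mutation of $M[-\ell]$ lying weakly above $N$. Because $M[-\ell]$ is indecomposable, the first paragraph forces this mutation to be $M[-\ell+1]$, so $M[-\ell]>M[-\ell+1]\ge N$. Iterating, I produce a chain $M[-\ell]>M[-\ell+1]>\cdots$ of shifts of $M$, each weakly above $N$, the construction continuing as long as the current term is strictly above $N$. This chain cannot avoid $N$ forever: if no term equalled $N$, then after $2\ell$ steps I would have $M[\ell]>N$, which together with $N\ge M[\ell]$ forces $N=M[\ell]$ by antisymmetry of the silting partial order, a contradiction. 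Hence $N=M[k-\ell]$ for some $k$, a shift of $M$.

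The step I expect to be the main point to get right is the forcing in the descent: one must use that each term of the chain is again a shift of $M$ (hence indecomposable), so that the ``there exists an irreducible left mutation'' of Theorem \ref{properties of silting order}(1) has no choice but to be the next shift; this is exactly what keeps the whole chain inside $\{M[n]\}$ and lets antisymmetry close the argument, rather than having to invoke silting discreteness or any invariance of the number of indecomposable summands. I would also note that the statement is to be read for \emph{basic} silting complexes (elements of $Silt$), as the cited order and mutation results are; a non-basic silting complex such as $M\oplus M$ generates the same additive closure but is visibly not a shift of $M$.
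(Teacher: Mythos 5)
Your proof is correct, but there is nothing in the paper to compare it with: Theorem \ref{local algebra case} is imported verbatim from \cite[Thm.~2.26]{AI12} and used as a black box (its only role is to verify hypotheses (a) and (b) of Theorem \ref{unique Morita class} for the symmetric Kronecker algebra). What you have done is re-derive the imported result from the other ingredients the paper does state: the poset structure on $Silt(\mathcal{T})$ from \cite[Thm.~2.11]{AI12} (antisymmetry is where you close the argument), Theorem \ref{properties of silting order}(1), and the Hom-vanishing sandwich $M[-\ell]\ge N\ge M[\ell]$ that the paper itself uses in the proof of Theorem \ref{silting connected}. All three of your steps are sound: for an indecomposable silting complex the complement is zero, so the minimal left $\mathrm{add}(0)$-approximation is $M\to 0$ and the cone is $M[1]$, whence $\mu_M(M)=M[1]$ is the unique irreducible left mutation; indecomposability of every shift $M[n]$ then forces the mutation supplied by Theorem \ref{properties of silting order}(1) to stay inside $\{M[n]\mid n\in\Z\}$; and the termination after at most $2\ell$ steps, closed off by antisymmetry against $N\ge M[\ell]$, is exactly what lets you avoid any appeal to silting discreteness or to Theorem \ref{iterative left mutation}. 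Your caveat about basicness is also the right reading: with the paper's definitions $M\oplus M$ is a silting complex that is not a shift of $M$, so the statement must be interpreted in $Silt(\mathcal{T})$, which is the only way the paper applies it. One small remark: your derivation is presumably not the argument of \cite{AI12} itself, since the mutation-order result you rely on (their Thm.~2.35) appears there only after their Thm.~2.26; but given the toolkit assembled in Section 3 of this paper, your mutation-theoretic descent is the natural and most economical route, and it has the merit of making this section of the paper self-contained.
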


Hence, the assumptions (a) and (b) of Theorem \ref{unique Morita class} hold for the symmetric Kronecker algebra by 
Theorem \ref{Aihara thm} and Theorem \ref{local algebra case}. Thus, we obtain the following.

\begin{lem}
Let $A$ be  the symmetric Kronecker algebra. If a finite dimensional algebra $B$ is derived equivalent to $A$, then $B$ is Morita equivalent to $A$.
\end{lem}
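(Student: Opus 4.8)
The plan is to apply Theorem \ref{unique Morita class} in its simplest instance, $s=1$, taking $A_1=A$ to be the symmetric Kronecker algebra itself and setting $\mathcal{T}=K^b(proj(A))$. With this choice the conclusion of that theorem reads exactly: every finite dimensional $B$ with $K^b(proj(B))\simeq\mathcal{T}$ is Morita equivalent to $A$, which is the assertion to be proved. The entire argument thus reduces to verifying the two hypotheses (a) and (b) of Theorem \ref{unique Morita class} for this single algebra, and the structural feature that makes both verifications immediate is that $A\cong K[X,Y]/(X^2,Y^2)$ is \emph{local}.

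For hypothesis (a) I would observe that $A$ is the Brauer graph algebra of the graph consisting of one non-exceptional vertex carrying a single loop. This graph has exactly one cycle, namely the loop, which is of odd length one, and it contains no cycle of even length, so Theorem \ref{Aihara thm} yields tilting discreteness of $\mathcal{T}$ at once. (Alternatively, tilting discreteness can be read off directly from locality, since by Theorem \ref{local algebra case} the silting complexes are then only the shifts of $A$.)

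For hypothesis (b) the key point is that locality collapses all of the silting combinatorics. Since $A$ is local, the stalk complex $A$ is an \emph{indecomposable} silting complex of $\mathcal{T}$, so Theorem \ref{local algebra case} forces every silting complex of $\mathcal{T}$ to be a shift $A[n]$. In particular the unique indecomposable projective is $A$ itself, and its irreducible left tilting mutation $\mu_A(A)$ is again a silting complex, whence $\mu_A(A)\simeq A[n]$ for some $n$. As shifting is an auto-equivalence of $\mathcal{T}$ fixing endomorphism algebras, we get $\End_{\mathcal{T}}(\mu_A(A))\simeq\End_{\mathcal{T}}(A)\simeq A$, the last isomorphism being the standard identification of the endomorphism ring of the right regular module with $A$ (and in any case $A$ is commutative). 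This is precisely condition (b) with $j=1$, and Theorem \ref{unique Morita class} then closes the proof.

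The one place asking for a little care — the nearest thing to an obstacle — is the verification of (b): one must guarantee that $\End_{\mathcal{T}}(\mu_A(A))$ comes out to be $A$ \emph{on the nose}, not merely something derived equivalent to it, which is the literal requirement of hypothesis (b). Locality is exactly what makes this painless, since it turns $\mu_A(A)$ into an honest shift of $A$ and shifts preserve endomorphism algebras exactly; without the input of Theorem \ref{local algebra case} one would instead have to write down the two-term mutation complex explicitly and compute its endomorphism ring by hand.
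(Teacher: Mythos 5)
Your proof is correct and follows essentially the same route as the paper: the paper likewise deduces hypotheses (a) and (b) of Theorem \ref{unique Morita class} from Theorem \ref{Aihara thm} and Theorem \ref{local algebra case}, and your write-up simply makes explicit the details (the loop being the unique odd cycle, and locality forcing $\mu_A(A)$ to be a shift of $A$, hence with endomorphism ring $A$) that the paper leaves implicit.
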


\subsection{}
Secondly, we consider the Brauer graph algebra for the Brauer graph
\[
\begin{xy}
(0,0) *[o]+[Fo]{2}="A", (10,0) *[o]+[Fo]{2}="B", (20,0) *[o]+[Fo]{2}="C",
\ar@{-} "A";"B"
\ar@{-} "B";"C"
\end{xy}
\]
and we denote it by $A(2,2,2)$. As a bounded quiver algebra, the quiver is 
\[
\begin{xy}
(0,0) *[o]+{1}="A", (10,0) *[o]+{2}="B",
\ar @(dl,ul) "A";"A"^{\alpha}
\ar @(ur,dr) "B";"B"^{\beta}
\ar @<1mm> "A";"B"^\mu
\ar @<1mm> "B";"A"^\nu
\end{xy}
\]
and the relations are
\[
\alpha\mu=\mu\beta=\beta\nu=\nu\alpha=0,\;\; \alpha^2=(\mu\nu)^2, \;\; \beta^2=(\nu\mu)^2.
\]
We denote $A(2,2,2)$ by $A$. The indecomposable projective $A$-modules are
\begin{align*}
P_1&=\Span\{e_1, \alpha, \mu, \mu\nu, \mu\nu\mu, (\mu\nu)^2\},\\
P_2&=\Span\{e_2, \beta, \nu, \nu\mu, \nu\mu\nu, (\nu\mu)^2\}.
\end{align*}
The heart $\Rad(P_1)/\Soc(P_1)$ is the direct sum of $\Span\{\alpha\}$ and the uniserial module $\Span\{\mu,\mu\nu, \mu\nu\mu\}$. 
Similarly, $\Rad(P_2)/\Soc(P_2)$ is the direct sum of $\Span\{\beta\}$ and the uniserial module $\Span\{\nu,\nu\mu, \nu\mu\nu\}$. 
$\Hom_A(P_1,P_2)$ consists of linear combinations of left multiplication by $\nu$ and $\nu\mu\nu$. We denote it by
\[
\Hom_A(P_1,P_2)=\Span\{\nu, \nu\mu\nu\}.
\]
Then, $\Hom_A(P_2,P_1)=\Span\{\mu, \mu\nu\mu\}$, and 
\[
\End_A(P_1)=\Span\{e_1, \mu\nu, \alpha, \alpha^2 \}, \;\; \End_A(P_2)=\Span\{e_2, \nu\mu, \beta, \beta^2\}.
\]

We may show that the derived equivalence class of $A(2,2,2)$ coincides with the Morita equivalence class of $A(2,2,2)$ as follows. 

\begin{prop}\label{Endoalgebra computation}
Any finite dimensional algebra which is derived equivalent to $A(2,2,2)$ is Morita equivalent to $A(2,2,2)$.
\end{prop}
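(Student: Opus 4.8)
The plan is to apply Theorem \ref{unique Morita class} with the single algebra $A_1=A(2,2,2)=A$, so that $s=1$. Condition (a) is immediate: the Brauer graph of $A$ is the line $2-2-2$, which is acyclic, so it contains no cycle of even length and at most one of odd length, whence $K^b(proj(A))$ is tilting discrete by Theorem \ref{Aihara thm}. The entire content therefore lies in verifying condition (b): for each of the two indecomposable projectives $X\in\{P_1,P_2\}$, the irreducible left tilting mutation $\mu_X(A)$ satisfies $\End_{K^b(proj(A))}(\mu_X(A))\simeq A$. Once this holds, Theorem \ref{unique Morita class} yields that every finite dimensional $B$ with $K^b(proj(B))\simeq K^b(proj(A))$ is Morita equivalent to $A$, which is exactly the assertion.

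Before computing I would halve the work by symmetry. The defining data of $A$ is invariant under the involution $\sigma$ exchanging $1\leftrightarrow 2$, $\alpha\leftrightarrow\beta$, $\mu\leftrightarrow\nu$, which reflects the line graph $2-2-2$ about its midpoint and induces a self-equivalence of $K^b(proj(A))$ swapping $P_1\leftrightarrow P_2$; hence $\End(\mu_{P_2}(A))\simeq\End(\mu_{P_1}(A))$ and it suffices to treat $X=P_1$. For $X=P_1$ I would first determine the minimal left ${\rm add}(P_2)$-approximation of $P_1$. Viewing $\Hom_A(P_1,P_2)=\Span\{\nu,\nu\mu\nu\}$ as a left module over $\End_A(P_2)=\Span\{e_2,\nu\mu,\beta,\beta^2\}$, one checks $\nu\mu\cdot\nu=\nu\mu\nu$ while $\beta\nu=0$, so this module is cyclic on the generator $\nu$. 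The minimal left approximation is thus a single copy $P_1\xrightarrow{\nu}P_2$, whose cone is the two-term complex $Z=(P_1\xrightarrow{\nu}P_2)$ with $P_1$ in degree $-1$ and $P_2$ in degree $0$. Therefore $\mu_{P_1}(A)=Z\oplus P_2$, and since $A$ is symmetric this is a tilting complex by Corollary \ref{silting equals tilting}.

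It then remains to compute $\End_{K^b(proj(A))}(Z\oplus P_2)$ as chain maps modulo homotopy and to recognize it as $A$. I would compute the four blocks: $\End(P_2)=\End_A(P_2)$ is $4$-dimensional; $\End(Z)$ is $4$-dimensional and local, with radical spanned by the classes of $(\alpha,0)$ and $(0,\beta)$ together with their common square, which is the socle, so it matches $\End_A(P_1)$; and $\Hom(Z,P_2)$, $\Hom(P_2,Z)$ are each $2$-dimensional, the homotopies in every block being controlled by the degree $-1$ maps in $\Hom_A(P_2,P_1)=\Span\{\mu,\mu\nu\mu\}$. The total dimension is $4+4+2+2=12=\dim A$. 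Reading off the quiver — two vertices $Z$ and $P_2$, a loop at each, one arrow in each direction between them — I would then match the products against the defining relations $\alpha\mu=\mu\beta=\beta\nu=\nu\alpha=0$, $\alpha^2=(\mu\nu)^2$, $\beta^2=(\nu\mu)^2$ of $A(2,2,2)$, identifying the loop at $Z$ with $\alpha$ and the class of $(0,\beta)$ in $\End(Z)$ with the round trip $\mu\nu$, and conclude $\End(\mu_{P_1}(A))\simeq A$.

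The main obstacle is precisely this last recognition step: computing all compositions in the homotopy category while keeping careful track of which chain endomorphisms of $Z$ are null-homotopic (the homotopy subspace sits inside the six-dimensional space of chain endomorphisms), and verifying both the four vanishing relations and the two socle relations for the new generators. The delicate bookkeeping lies in the degrees and signs of the degree $-1$ homotopy $P_2\to P_1$ and in confirming minimality of the approximation; once these are pinned down, the identification of the presented algebra with $A(2,2,2)$ is routine linear algebra. With condition (b) established for both $X=P_1$ and, by $\sigma$-symmetry, $X=P_2$, Theorem \ref{unique Morita class} completes the proof.
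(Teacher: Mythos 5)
Your proposal is correct and follows essentially the same route as the paper: tilting discreteness of the Brauer graph algebra via Theorem \ref{Aihara thm}, identification of the minimal left ${\rm add}(P_2)$-approximation $P_1\xrightarrow{\nu}P_2$ and the mutation $Z\oplus P_2$, an explicit computation of the endomorphism algebra modulo homotopy showing it is again $A(2,2,2)$ (with the $P_2$-mutation handled by the evident symmetry), and finally Theorem \ref{unique Morita class} with $s=1$. One small caution for your deferred sign bookkeeping: the squares of the classes of $(\alpha,0)$ and $(0,\beta)$ in $\End_{K^b(proj(A))}(Z)$ are not a \emph{common} square but negatives of each other modulo homotopy --- this is exactly the relation ${\alpha'}^2=-(\mu'\nu')^2$ found in the paper, which is then absorbed by rescaling a generator, possible since $K$ is algebraically closed of characteristic not two.
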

\begin{proof}
Let $A=A(2,2,2)$. We compute the endomorphism algebra $\End_{K^b(A)}(\mu_{P_1}(A))$. The computation of $\End_{K^b(A)}(\mu_{P_2}(A))$ is obtained by swapping the role of $P_1$ and $P_2$. First of all, it is easy to see that the minimal left ${\rm add}(P_2)$-approximation is the left multiplication by $\nu$, and we denote it by $\nu: P_1\to P_2$. Thus, 
the irreducible left tilting mutation $\mu_{P_1}(A)$ is the complex
\[
\begin{array}{ccccccccc}
\cdots\rightarrow & 0 & \rightarrow & P_1 & \rightarrow & P_2\oplus P_2 & \rightarrow & 0 & \rightarrow\cdots
\end{array}
\]
where the differential  $d: P_1 \rightarrow P_2\oplus P_2$ from degree $-1$ to degree $0$ is given by
\[
d=\begin{pmatrix}
\nu \\ 0
\end{pmatrix}.
\]
We consider the space of endomorphisms $\{ f_i\}_{i\in\Z}$ of complexes:
\[
\begin{array}{ccccccccc}
\cdots\rightarrow & 0 & \rightarrow & P_1 & \rightarrow & P_2\oplus P_2 & \rightarrow & 0 & \rightarrow\cdots\\
          & \downarrow &      & \downarrow &                  & \downarrow &  & \downarrow & \\
\cdots\rightarrow & 0 & \rightarrow & P_1 & \rightarrow & P_2\oplus P_2 & \rightarrow & 0 & \rightarrow\cdots
\end{array}
\]
Since $f_i=0$, for $i\ne-1,0$, we write elements of $\End_{C^b(proj(A))}(\mu_{P_1}(A))$ by
\[
f=\begin{pmatrix} f_{-1} & 0 \\ 0 & f_0 \end{pmatrix}.
\]
Then $\End_{C^b(proj(A))}(\mu_{P_1}(A))$ is the matrix algebra consisting of the elements
\[
\begin{pmatrix}
a_1e_1+a_2\mu\nu+a_5\alpha+a_6\alpha^2 & 0 & 0 \\
0 & a_1e_2+a_2\nu\mu+a_3\beta+a_4\beta^2 & b_1e_2+b_2\nu\mu+b_3\beta+b_4\beta^2 \\
0 & c_3\beta+c_4\beta^2 & d_1e_2+d_2\nu\mu+d_3\beta+d_4\beta^2
\end{pmatrix}
\]

\noindent
where $a_i, b_i, c_i, d_i$ are coefficients. The null-homotopic endomorphisms form 
its ideal consisting of the elements
\[
\begin{pmatrix}
p\mu\nu & 0 & 0 \\
0 & p\nu\mu+q\beta^2 & r\nu\mu+s\beta^2 \\ 
0 & 0 & 0 
\end{pmatrix}
\]
where $p,q,r,s$ are coefficients, and the factor algebra is $\End_{K^b(proj(A))}(\mu_{P_1}(A))$. Now we observe that 
$\End_{K^b(proj(A))}(\mu_{P_1}(A))$ is generated by
\begin{align*}
e_1'=\begin{pmatrix} e_1 & 0 & 0 \\ 0 & e_2 & 0 \\ 0 & 0 & 0 \end{pmatrix}, \quad& e_2'=\begin{pmatrix} 0 & 0 & 0 \\ 0 & 0 & 0 \\ 0 & 0 & e_2 \end{pmatrix} \\[5pt]
\alpha'=\begin{pmatrix} \alpha & 0 & 0 \\ 0 & e_2 & 0 \\ 0 & 0 & 0 \end{pmatrix}, \quad& \mu'=\begin{pmatrix} 0 & 0 & 0 \\ 0 & 0 & e_2 \\ 0 & 0 & 0 \end{pmatrix} \\[5pt]
\nu'=\begin{pmatrix} 0 & 0 & 0 \\ 0 & 0 & 0 \\ 0 & \beta & 0 \end{pmatrix}, \quad& \beta'=\begin{pmatrix} 0 & 0 & 0 \\ 0 & 0 & 0 \\ 0 & 0 & \nu\mu \end{pmatrix}
\end{align*}
and they satisfy the same relations as the generators $e_1, e_2, \alpha, \mu, \nu, \beta$ of $A(2,2,2)$ but ${\alpha'}^2=-(\mu'\nu')^2$. 
Thus, we conclude that $\End_{K^b(proj(A))}(\mu_{P_1}(A))\simeq A(2,2,2)$.
\end{proof}

\subsection{}
We consider 
$\begin{xy}
(0,0) *[o]+[Fo]{2}="A", (10,0) *[o]+[Fo]{2}="B", (20,0) *[o]+[Fo]{\hphantom{2}}="C",
\ar@{-} "A";"B"
\ar@{-} "B";"C"
\end{xy}$ and 
$\begin{xy}
(0,0) *[o]+[Fo]{2}="A", (10,0) *[o]+[Fo]{\hphantom{2}}="B", (20,0) *[o]+[Fo]{2}="C",
\ar@{-} "A";"B"
\ar@{-} "B";"C"
\end{xy}$. We denote the corresponding Brauer graph algebras by $A(2,2,1)$ and $A(2,1,2)$, respectively. 

\begin{prop}
We have the following.
\begin{itemize}
\item[(1)]
Let $A=A(2,2,1)$ and $P$ an indecomposable projective $A$-module. Then, $\End_{K^b(proj(A))}(\mu_P(A))$ is isomorphic to $A(2,2,1)$ or $A(2,1,2)$.
\item[(2)]
Let $A=A(2,1,2)$ and $P$ an indecomposable projective $A$-module. Then, $\End_{K^b(proj(A))}(\mu_P(A))$ is isomorphic to $A(2,2,1)$.
\item[(3)]
Finite dimensional algebras in the derived equivalence class of $A(2,1,2)$ are Morita equivalent to either $A(2,2,1)$ or $A(2,1,2)$.
\end{itemize}
\end{prop}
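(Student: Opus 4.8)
The plan is to prove parts (1) and (2) by the same explicit computation carried out for $A(2,2,2)$ in Proposition \ref{Endoalgebra computation}, and then to deduce (3) formally from Theorem \ref{unique Morita class}. For both $A=A(2,2,1)$ and $A=A(2,1,2)$ there are exactly two indecomposable projectives $P_1$ and $P_2$, whose radical series I would first write down explicitly as in the $A(2,2,2)$ case. The key preliminary observation is that, for each choice of $P_i$, the space $\Hom_A(P_i,P_{3-i})$ is generated as a left $\End_A(P_{3-i})$-module by a single homomorphism $g$ (left multiplication by $\nu$ or by $\mu$); hence the minimal left ${\rm add}(P_{3-i})$-approximation of $P_i$ is the single map $g\colon P_i\to P_{3-i}$, and the irreducible left tilting mutation $\mu_{P_i}(A)$ is the two-term complex with $P_i$ in degree $-1$, $P_{3-i}\oplus P_{3-i}$ in degree $0$, and differential $\binom{g}{0}$. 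This is exactly the shape appearing in Proposition \ref{Endoalgebra computation}, so the same bookkeeping applies.

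Next I would compute $\End_{C^b(proj(A))}(\mu_{P_i}(A))$ as a matrix algebra, using the defining relations of $A$ to impose the two commutation constraints coming from $\binom{g}{0}$, and then pass to the quotient by the ideal of null-homotopic endomorphisms, exactly as in Proposition \ref{Endoalgebra computation}. Reading off a generating set modulo homotopy and checking the relations then identifies $\End_{K^b(proj(A))}(\mu_{P_i}(A))$ with one of the standard Brauer graph algebras. A convenient check that also pins down which algebra occurs is the dimension: for $A=A(2,2,1)$ a computation gives $\dim\End_{K^b(proj(A))}(\mu_{P_1}(A))=8$ and $\dim\End_{K^b(proj(A))}(\mu_{P_2}(A))=11$, matching $A(2,1,2)$ and $A(2,2,1)$ respectively, which yields (1); while for $A=A(2,1,2)$ the $1\leftrightarrow 2$ symmetry of the algebra makes the two mutations isomorphic, and one finds dimension $11$, matching $A(2,2,1)$, which yields (2).

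For part (3) I would invoke Theorem \ref{unique Morita class} with the list $A_1=A(2,2,1)$ and $A_2=A(2,1,2)$. These are symmetric Brauer graph algebras, and they are derived equivalent: by part (1) the algebra $A(2,1,2)$ is the endomorphism algebra of the complex $\mu_{P_1}(A(2,2,1))$ over $A(2,2,1)$, which is a tilting complex by Corollary \ref{silting equals tilting}, so Theorem \ref{Rickard Morita theorem} furnishes a derived equivalence under which I identify $\mathcal{T}=K^b(proj(A(2,2,1)))=K^b(proj(A(2,1,2)))$. Hypothesis (a) of Theorem \ref{unique Morita class}, tilting discreteness of $\mathcal{T}$, holds by Theorem \ref{Aihara thm}, since the Brauer graph of $A(2,1,2)$ is a tree and therefore contains no cycle at all, in particular no even cycle and at most one odd cycle. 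Hypothesis (b) is precisely the content of parts (1) and (2): mutating either $A_1$ or $A_2$ at any indecomposable projective yields an algebra isomorphic to $A_1$ or $A_2$. Theorem \ref{unique Morita class} then gives the conclusion, since the derived equivalence class of $A(2,1,2)$ consists exactly of the finite dimensional algebras $B$ with $K^b(proj(B))\simeq\mathcal{T}$.

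The main obstacle is the explicit endomorphism algebra computation underlying (1) and (2): one must correctly determine the generators of $\End_{K^b(proj(A))}(\mu_{P_i}(A))$ modulo null-homotopy and verify all the Brauer graph algebra relations on the nose, rather than merely matching dimensions. As in the $A(2,2,2)$ computation, I expect the naive generators to satisfy the relations only up to a sign, an analogue of the twist $\alpha'^2=-(\mu'\nu')^2$ appearing there, so the delicate point is to rescale the generators so as to land exactly on the standard presentation of $A(2,2,1)$ or $A(2,1,2)$, while keeping track of which indecomposable projective produces which algebra.
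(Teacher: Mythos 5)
Your proposal is correct and follows essentially the same route as the paper: parts (1) and (2) by the explicit two-term mutation complex $P_i\to P_{3-i}\oplus P_{3-i}$ and computation of chain endomorphisms modulo null-homotopies exactly as in Proposition \ref{Endoalgebra computation} (including the sign adjustment of the generators, and with the same assignment $\mu_{P_1}(A(2,2,1))\rightsquigarrow A(2,1,2)$, $\mu_{P_2}(A(2,2,1))\rightsquigarrow A(2,2,1)$, and the $1\leftrightarrow2$ symmetry for $A(2,1,2)$), and part (3) by combining Theorem \ref{Aihara thm} (tilting discreteness, since the Brauer graphs are trees) with Theorem \ref{unique Morita class}. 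Your dimension counts ($8$ and $11$) are consistent with the paper's conclusions and serve as a useful sanity check on the identifications.
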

\begin{proof}
The computation of tilting mutation for the algebras $A(2,2,1)$ and $A(2,1,2)$ in (1) and (2) are similar to the proof of Proposition \ref{Endoalgebra computation}. 
Thus, we only give the result of the computation. 

{\rm(1)} Let $A=A(2,2,1)$. Then, $A$ is the bounded quiver algebra whose quiver is 
\[
\begin{xy}
(0,0) *[o]+{1}="A", (10,0) *[o]+{2}="B",
\ar @(dl,ul) "A";"A"^{\alpha}
\ar @<1mm> "A";"B"^\mu
\ar @<1mm> "B";"A"^\nu
\end{xy}
\]
and the relations are $\alpha\mu=\mu\nu\mu\nu\mu=\nu\mu\nu\mu\nu=\nu\alpha=0$, $\alpha^2=(\mu\nu)^2$. 
We start with $\mu_{P_1}(A)$. Then, $\End_{C^b(proj(A))}(\mu_{P_1}(A))$ is the matrix algebra consisting of 
\[
\begin{pmatrix}
a_1e_1+a_2\mu\nu+a_4\alpha+a_5\alpha^2 & 0 & 0 \\
0 & a_1e_2+a_2\nu\mu+a_3(\nu\mu)^2 & b_1e_2+b_2\nu\mu+b_3(\nu\mu)^2 \\
0 & c_1(\nu\mu)^2 & d_1e_2+d_2\nu\mu+d_3(\nu\mu)^2
\end{pmatrix}
\]
and the two-sided ideal of null-homotopic elements consists of
\[
\begin{pmatrix}
p\mu\nu & 0 & 0 \\
0& p\nu\mu+q(\nu\mu)^2 & r\nu\mu + s(\nu\mu)^2 \\
0& 0 & 0 
\end{pmatrix}.
\]
We define basis elements of $\End_{K^b(proj(A))}(\mu_{P_1}(A))$ in the same way as in the proof of Proposition \ref{Endoalgebra computation}, except for 
$\nu'$. For $\nu'$, we replace $\beta$ with $(\nu\mu)^2$. By modifying the sign of ${\alpha'}^2=-\mu'\nu'$, we conclude that 
$\End_{K^b(proj(A))}(\mu_{P_1}(A))\simeq A(2,1,2)$.

For $\End_{K^b(proj(A))}(\mu_{P_2}(A))$, we do the similar computation and define
\begin{gather*}
e_1'=\begin{pmatrix} e_2 & 0 & 0 \\ 0 & e_1 & 0 \\ 0 & 0 & 0 \end{pmatrix}, \quad e_2'=\begin{pmatrix} 0 & 0 & 0 \\ 0 & 0 & 0 \\ 0 & 0 & e_1 \end{pmatrix} \\[5pt]
\mu'=\begin{pmatrix} 0 & 0 & 0 \\ 0 & 0 & e_1 \\ 0 & 0 & 0 \end{pmatrix} \quad
\nu'=\begin{pmatrix} 0 & 0 & 0 \\ 0 & 0 & 0 \\ 0 & \alpha & 0 \end{pmatrix}, \quad \beta'=\begin{pmatrix} 0 & 0 & 0 \\ 0 & 0 & 0 \\ 0 & 0 & \mu\nu \end{pmatrix}.
\end{gather*}
Then, they define a bounded quiver algebra for the quiver
\[
\begin{xy}
(0,0) *[o]+{2}="A", (10,0) *[o]+{1}="B",
\ar @(dl,ul) "A";"A"^{\beta'}
\ar @<1mm> "A";"B"^{\nu'}
\ar @<1mm> "B";"A"^{\mu'}
\end{xy}
\]
and it follows that $\End_{K^b(proj(A))}(\mu_{P_2}(A))\simeq A(2,2,1)$.

{\rm (2)} Let $A=A(2,1,2)$. Then, $A$ is the bounded quiver algebra whose quiver is
\[
\begin{xy}
(0,0) *[o]+{1}="A", (10,0) *[o]+{2}="B",
\ar @(dl,ul) "A";"A"^{\alpha}
\ar @(ur,dr) "B";"B"^{\beta}
\ar @<1mm> "A";"B"^\mu
\ar @<1mm> "B";"A"^\nu
\end{xy}
\]
and whose relations are $\alpha\mu=\mu\beta=\beta\nu=\nu\alpha=0,\;\; \alpha^2=\mu\nu, \;\; \beta^2=\nu\mu$. Since the computation is symmetric, 
it suffices to consider $\mu_{P_1}(A)$. Then, the similar computation above shows that $\End_{K^b(proj(A))}(\mu_{P_1}(A))\simeq A(2,2,1)$.

{\rm(3)} Since Theorem \ref{Aihara thm} implies that these algebras are tilting discrete, 
(3) follows because the algebras satisfy the assumptions (a) and (b) of Theorem \ref{unique Morita class}.
\end{proof}

\section{Tame block algebras of Hecke algebras of type $D$}

In this section, we consider block algebras of Hecke algebras of  type $D$. Thus, we consider the Hecke algebra $H_n(q,1)$ of type $B$ for the parameter $Q=1$. 
The algebra $H_n(q,1)$ is generated by $T_0,\dots,T_{n-1}$ and the quadratic equation for $T_0$ is $T_0^2=1$. 
Define an algebra automorphism $\tau$ of $H_n(q,1)$ by $\tau(T_1)=T_0T_1T_0$ and $\tau(T_i)=T_i$, for $i\ne1$. 
Define another algebra automorphism $\sigma$ of $H_n(q,1)$ by $\sigma(T_0)=-T_0$ and $\sigma(T_i)=T_i$, for $i\ne0$. 
Then, $\sigma\tau=\tau\sigma$ and the Hecke algebra of type $D$ is the fixed point subalgebra $H_n(q,1)^\sigma$. 

Recall from the author's work \cite{Ar01(b)} that irreducible $H_n(q,1)$-modules are labeled by Kleshchev bipartitions when 
$e\ge2$ is even. They are nodes of the Misra-Miwa realization of the Kashiwara crystal $B(\Lambda)$, for $\Lambda=\Lambda_0+\Lambda_{e/2}$. 
The signature rule to compute the Kashiwara operators, for a given bipartition, is as follows. 
\begin{itemize}
\item[(a)]
Read removable and indent $i$-nodes $(i\in\Z/2\Z)$ from the top row of the first component of the bipartition to the bottom row of the second 
component of the bipartition. 
\item[(b)]
Delete consecutive occurrence of a removable $i$-node and an indent $i$-node in this order as many times as possible from the sequence, 
and change the status of the rightmost indent $i$-node to removable $i$-node.
\end{itemize}
We denote the block algebra of $H_n(q,1)$ labeled by a weight $\Lambda-\beta$ of the integrable highest weight module $V(\Lambda)$ by $R^\Lambda(\beta)$. 

For a Kleshchev bipartition $\lambda\vdash n$, we denote the irreducible $H_n(q,1)$-module by $D^\lambda$. 
Then, we denote by $(D^\lambda)^\sigma$ the irreducible $H_n(q,1)$-module obtained from $D^\lambda$ by twisting the module structure by $\sigma$, and 
define $h(\lambda)$ by $(D^\lambda)^\sigma=D^{h(\lambda)}$. 
The next theorem is obtained by a version of Clifford theory.

\begin{thm}\label{restriction}
Recall that the base field is algebraically closed of characterisitc not equal to two. 
\begin{itemize}
\item[(1)]
If $h(\lambda)\ne\lambda$ then $D^\lambda$ remains irreducible as an $H_n(q,1)^\sigma$-module. Further, 
$D^\lambda$ and $D^{h(\lambda)}$ are equivalent as $H_n(q,1)^\sigma$-modules. 
\item[(2)]
If $h(\lambda)\ne\lambda$ then $D^\lambda$ is the direct sum of pairwise inequivalent irreducible $H_n(q,1)^\sigma$-modules. Further, 
the twist by $\tau$ swaps the two irreducible $H_n(q,1)^\sigma$-modules. 
\end{itemize}
\end{thm}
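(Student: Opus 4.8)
The plan is to treat the inclusion $H^\sigma\subset H:=H_n(q,1)$ as an index-two skew group algebra and run Clifford theory. First I would record the structural fact that underlies everything: since $T_0^2=1$ and $T_0$ commutes with every $T_i$ for $i\neq1$, conjugation by $T_0$ realises $\tau$, so $\tau=\operatorname{Ad}(T_0)$ is \emph{inner} on $H$; moreover $\sigma(T_0)=-T_0$ places $T_0$ in the $(-1)$-eigenspace $H^{-\sigma}$, and a direct check gives $H^{-\sigma}=H^\sigma T_0$, whence $H=H^\sigma\oplus H^\sigma T_0$ is the skew group algebra $A\rtimes\langle T_0\rangle$ with $A:=H^\sigma$ and $\Z/2\Z$ acting by $\tau$. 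Because $2$ is invertible in $K$, restriction along $A\subset H$ preserves semisimplicity, so each $\Res^H_A(D^\lambda)$ is a semisimple $A$-module.

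The computational heart is the $H$-module isomorphism
\[
\Res^H_A(D^\lambda)\otimes_A H \;\cong\; D^\lambda \oplus (D^\lambda)^\sigma ,
\]
which I would prove by exhibiting the two surjections $d\otimes h\mapsto d\cdot h$ onto $D^\lambda$ and $d\otimes h\mapsto d\cdot\sigma(h)$ onto $(D^\lambda)^\sigma$ and checking that their sum is an isomorphism on the spanning set $\{d\otimes1,\,d\otimes T_0\}$; solving the resulting $2\times2$ linear system is exactly where $\operatorname{char}K\neq2$ enters. Combining this with the induction-restriction adjunction and Schur's lemma gives
\[
\dim_K \End_A(\Res^H_A D^\lambda)=\dim_K\Hom_H(D^\lambda,D^\lambda)+\dim_K\Hom_H((D^\lambda)^\sigma,D^\lambda)=1+[\,D^\lambda\cong(D^\lambda)^\sigma\,],
\]
where the bracket is $1$ precisely when $h(\lambda)=\lambda$, since $(D^\lambda)^\sigma\cong D^{h(\lambda)}$. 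When $h(\lambda)\neq\lambda$ the endomorphism algebra is $K$, so the semisimple module $\Res^H_A D^\lambda$ is irreducible; and as $\sigma$ fixes $A$ pointwise, $\Res^H_A(D^{h(\lambda)})=\Res^H_A((D^\lambda)^\sigma)=\Res^H_A(D^\lambda)$, which is (1). When $h(\lambda)=\lambda$ the endomorphism algebra is two-dimensional and commutative, so the semisimple module splits as $V_1\oplus V_2$ with $V_1\not\cong V_2$ both irreducible, which is the splitting in (2).

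For the $\tau$-swap in the split case I would make the intertwiners explicit. Fix an isomorphism $\phi\colon D^\lambda\to(D^\lambda)^\sigma$; then $\phi^2$ is $H$-linear, hence a nonzero scalar by Schur, so after rescaling $\phi^2=\id$ and $V_1,V_2$ are its $\pm1$-eigenspaces. On the other hand, right multiplication $R_{T_0}\colon d\mapsto dT_0$ is an $H$-isomorphism $D^\lambda\xrightarrow{\sim}(D^\lambda)^\tau$ (this is precisely the statement that $\tau$ is inner), and restricting it to $A$ identifies $(\Res^H_A D^\lambda)^\tau$ with $\Res^H_A D^\lambda$. The relation $\sigma(T_0)=-T_0$ forces the anticommutation $\phi\circ R_{T_0}=-R_{T_0}\circ\phi$, so $R_{T_0}$ carries the $+1$-eigenspace of $\phi$ into the $-1$-eigenspace and vice versa; thus $R_{T_0}$ restricts to an $A$-isomorphism $V_1\to V_2$ that intertwines the actions up to $\tau$, that is $V_1^\tau\cong V_2$, which is the asserted swap.

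The step I expect to be the main obstacle is not any single computation but keeping the two automorphisms in their correct roles: the dichotomy on $H$-modules is governed by the outer automorphism $\sigma$ (through $h$), whereas the coset representative effecting Clifford descent is $T_0$ and the resulting swap on $A$-modules is the twist by the inner automorphism $\tau$. The sign $\sigma(T_0)=-T_0$ is what links them, since both the key isomorphism $\Res^H_A D^\lambda\otimes_A H\cong D^\lambda\oplus(D^\lambda)^\sigma$ and the anticommutation $\phi R_{T_0}=-R_{T_0}\phi$ rest on it; so the delicate points are the two places where $\operatorname{char}K\neq2$ is invoked and the bookkeeping that prevents conflating $\sigma$ with $\tau$.
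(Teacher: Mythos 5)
Your proposal is correct, and it is exactly the argument the paper has in mind: the paper offers no proof at all, saying only that the theorem ``is obtained by a version of Clifford theory,'' and your write-up supplies precisely that standard Clifford theory for the index-two extension $H_n(q,1)^\sigma\subset H_n(q,1)$ (the skew-product decomposition $H=H^\sigma\oplus H^\sigma T_0$, the isomorphism $\Res D^\lambda\otimes_{H^\sigma}H\cong D^\lambda\oplus(D^\lambda)^\sigma$ with adjunction and Schur's lemma, and the $R_{T_0}$--$\phi$ anticommutation giving the $\tau$-swap), with the characteristic-not-two hypothesis entering exactly where it should. Note only that item (2) of the paper's statement contains an evident typo (it should read $h(\lambda)=\lambda$), which you correctly interpreted.
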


\noindent
The following result of Hu enables us to compute $h(\lambda)$ explicitly.

\begin{thm}[{\cite[Thm.1.5]{Hu03}}]
Assume that $e\ge2$ is even and $\Lambda=\Lambda_0+\Lambda_{e/2}$. 
If $\lambda=\tilde{f}_{i_1}\cdots \tilde{f}_{i_n}\emptyset\in B(\Lambda)$, then $h(\lambda)=\tilde{f}_{i_1+e/2}\cdots \tilde{f}_{i_n+e/2}\emptyset$. 
\end{thm}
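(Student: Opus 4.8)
The plan is to realize the twist $D^\lambda \mapsto (D^\lambda)^\sigma$ as the diagram automorphism of the crystal $B(\Lambda)$ coming from the rotation $\rho\colon i \mapsto i + e/2$ of the cyclic quiver $\Z/e\Z$, and then to extract the formula for $h(\lambda)$ by induction on $n$. The point of departure is that $\rho$ swaps $\Lambda_0$ and $\Lambda_{e/2}$, hence fixes $\Lambda = \Lambda_0+\Lambda_{e/2}$ and induces an automorphism of $B(\Lambda)$ which, on the Misra--Miwa data, is precisely $\tilde{f}_{i_1}\cdots\tilde{f}_{i_n}\emptyset \mapsto \tilde{f}_{i_1+e/2}\cdots\tilde{f}_{i_n+e/2}\emptyset$. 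The theorem is thus the assertion that the module-theoretic twist by $\sigma$ categorifies $\rho$.

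To build the bridge I would invoke the module-theoretic description of the crystal operators: for a Kleshchev bipartition $\mu$, the node $\tilde{f}_i\mu$ labels the irreducible top (equivalently socle) of the refined $i$-induction $f_i D^\mu$, where $f_i$ is ordinary induction along $H_{n-1}(q,1)\subset H_n(q,1)$ followed by projection onto the generalized eigenspace of the Jucys--Murphy element $L_n$ attached to residue $i$. The crucial computation is how $\sigma$ acts on these elements. Writing $L_n = T_{n-1}\cdots T_1 T_0 T_1\cdots T_{n-1}$ and using $\sigma(T_0)=-T_0$, $\sigma(T_k)=T_k$ for $k\ge1$, a one-line induction gives $\sigma(L_n)=-L_n$ for all $n$, and the normalization of $L_n$ is immaterial since each $L_n$ contains exactly one factor $T_0$. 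Because $1+q+\cdots+q^{e-1}=0$ with $e$ even forces $q$ to be a primitive $e$-th root of unity, we have $q^{e/2}=-1$; hence if $L_n$ has generalized eigenvalue $q^i$ on $M$, it has eigenvalue $-q^i=q^{i+e/2}$ on the twist $M^\sigma$. In other words the $i$-eigenspace on $M$ becomes the $(i+e/2)$-eigenspace on $M^\sigma$.

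From this I would deduce the functorial intertwining $(f_i M)^\sigma \cong f_{i+e/2}(M^\sigma)$, and similarly $(e_i M)^\sigma \cong e_{i+e/2}(M^\sigma)$ for $i$-restriction, since twisting by $\sigma$ commutes with induction while relabelling JM eigenspaces as above. As $(-)^\sigma$ is an exact auto-equivalence of the module category, it preserves tops and socles and sends irreducibles to irreducibles, so it carries $\tilde{f}_i\mu$ to $\tilde{f}_{i+e/2}\,h(\mu)$ at the level of crystal labels. The induction is then immediate: the base case is $(D^\emptyset)^\sigma = D^\emptyset$, and writing $\lambda = \tilde{f}_{i_1}\mu$ with $\mu = \tilde{f}_{i_2}\cdots\tilde{f}_{i_n}\emptyset$ we obtain
\[
(D^\lambda)^\sigma \simeq \Top\bigl(f_{i_1+e/2}(D^\mu)^\sigma\bigr) \simeq \Top\bigl(f_{i_1+e/2}\,D^{h(\mu)}\bigr) = D^{\tilde{f}_{i_1+e/2}\,h(\mu)},
\]
so that $h(\lambda) = \tilde{f}_{i_1+e/2}\,h(\mu) = \tilde{f}_{i_1+e/2}\cdots\tilde{f}_{i_n+e/2}\emptyset$ by the inductive hypothesis.

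The main obstacle is the middle step: pinning down precisely that negating the Jucys--Murphy elements shifts the residue by exactly $e/2$, and that the intertwiner $(f_i M)^\sigma \cong f_{i+e/2}(M^\sigma)$ holds as an isomorphism of functors rather than merely on the level of composition factors. This requires care with the residue/multicharge conventions attached to $\Lambda = \Lambda_0+\Lambda_{e/2}$ and with the charges $0$ and $e/2$ of the two components, checking in particular that the shift induced by $-1 = q^{e/2}$ is uniform across both components of the bipartition. Once this compatibility is established, the remainder of the argument is formal.
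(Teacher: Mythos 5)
The paper does not actually prove this statement: it is imported verbatim from Hu \cite[Thm.1.5]{Hu03}, so there is no internal proof to compare against. Your argument is correct, and it is in substance the same argument as in Hu's paper: the identities $\sigma(L_n)=-L_n$ (each Jucys--Murphy element contains exactly one factor $T_0$) and $q^{e/2}=-1$ (valid since $e$ is even, $q$ has order $e$, and $\cha K\ne 2$) show that twisting by $\sigma$ relabels the generalized $L_n$-eigenvalue $q^i$ as $q^{i+e/2}$; combined with the fact that $\sigma$ preserves the subalgebra $H_{n-1}(q,1)$, so that $(-)^\sigma$ commutes with $\Ind$, this gives the functorial intertwining $(f_iM)^\sigma\simeq f_{i+e/2}(M^\sigma)$, and the induction on $n$ then goes through exactly as you wrote it, with the nonvanishing of $\tilde{f}_{i_1+e/2}h(\mu)$ coming for free because $(D^\lambda)^\sigma$ is a nonzero irreducible. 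The one input you use that deserves explicit citation is the modular branching rule $D^{\tilde{f}_i\mu}=\Top(f_iD^\mu)$ underlying the Kleshchev-bipartition labeling (Grojnowski--Kleshchev theory and \cite{Ar01(b)}); granted that, the rest of your proof is formal and complete, and the opening paragraph about the diagram rotation $\rho$ fixing $\Lambda_0+\Lambda_{e/2}$ is motivation rather than a needed step.
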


If $e$ is odd, $-Q\not\in q^{\Z}$ implies that tame block algebras are Morita equivalent to the symmetric Kronecker algebra but it occurs only when $e=2$, contradicting 
our assumption that $e$ is odd. Hence $e$ must be even, and $-Q=q^{e/2}$. Then, we must have $e=2$ again by \cite[Thm.A]{Ar17}. 
In this situation, the following proposition holds. 

\begin{prop}\label{type D indec}
Let $A$ be a tame block algebra of type $D$, and $B$ the block algebra of type $B$ that covers $A$. 
Then, irreducible $B$-modules remain irreducible if we view them as $A$-modules.
\end{prop}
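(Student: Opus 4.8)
The plan is to turn the statement into a fixed-point question for the involution $h$ on the crystal $B(\Lambda)$ and then settle it with the theorem of Hu together with the representation-type criterion of \cite{Ar17}. Recall that the simple $B$-modules are the $D^\lambda$ with $\lambda$ a Kleshchev bipartition lying in the block, and that by Theorem \ref{restriction} such a module stays irreducible on restriction to the type $D$ subalgebra $H_n(q,1)^\sigma$ (landing in $A$) exactly when $h(\lambda)\neq\lambda$, while it breaks into two pieces when $h(\lambda)=\lambda$. Thus the entire content of the proposition is the combinatorial claim that \emph{no} Kleshchev bipartition $\lambda$ labelling a simple $B$-module is fixed by $h$.

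First I would make $h$ explicit. Since we are in the case $e=2$, we have $e/2=1$, so the theorem of Hu gives, for $\lambda=\tilde{f}_{i_1}\cdots\tilde{f}_{i_n}\emptyset$, that $h(\lambda)=\tilde{f}_{i_1+1}\cdots\tilde{f}_{i_n+1}\emptyset$; that is, $h$ is the involution of $B(\Lambda)$ induced by the diagram automorphism of $A^{(1)}_1$ interchanging the residues $0$ and $1$, consistent with $\sigma$ exchanging $\Lambda_0$ and $\Lambda_{e/2}=\Lambda_1$ and fixing $\Lambda$. In particular $h$ carries the block $R^\Lambda(\beta)$ with $\beta=c_0\alpha_0+c_1\alpha_1$ to $R^\Lambda(c_1\alpha_0+c_0\alpha_1)$, so a fixed point of $h$ can occur only inside a $\sigma$-stable block, namely one with $c_0=c_1$.

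If the block $B$ is not $\sigma$-stable this already finishes the argument: for $\lambda$ in $B$ the bipartition $h(\lambda)$ lies in the distinct block $R^\Lambda(c_1\alpha_0+c_0\alpha_1)$, so $h(\lambda)\neq\lambda$. It therefore remains to treat the $\sigma$-stable (balanced) case, which I expect to be the crux. Here I would feed in the classification of tame blocks: by \cite[Thm.~A]{Ar17} a block of type $B$ with $e=2$ is tame precisely when it has defect $2$, and for $\beta=c(\alpha_0+\alpha_1)$ the defect equals $2c$; hence the only $\sigma$-stable tame block is $R^\Lambda(\alpha_0+\alpha_1)$. A direct walk through the crystal shows that this block contains exactly the two Kleshchev bipartitions $((1),(1))=\tilde{f}_1\tilde{f}_0\emptyset$ and $(\emptyset,(1,1))=\tilde{f}_0\tilde{f}_1\emptyset$, and Hu's theorem interchanges them; so again no $\lambda$ is $h$-fixed.

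Combining the two cases, $h(\lambda)\neq\lambda$ for every simple $B$-module $D^\lambda$, and Theorem \ref{restriction}(1) yields that each $D^\lambda$ remains irreducible as an $A$-module, as required. As an alternative to the explicit crystal computation in the balanced case, one can argue by counting: a tame block of type $D$ has two simple modules, whereas a $\sigma$-stable tame block (which has exactly two simple modules, since a balanced block has at least two and a tame block at most two) would by Theorem \ref{restriction} contribute either one or four simple $A$-modules, already forcing the covering block $B$ to be non-$\sigma$-stable. In either route the essential difficulty is controlling the $\sigma$-stable blocks, that is ruling out an $h$-fixed Kleshchev bipartition, and this is exactly where the classification of \cite{Ar17} and the concrete description of $h$ via Hu's theorem are needed.
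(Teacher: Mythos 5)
Your main argument is correct, and it takes a genuinely different route from the paper's. The paper proves the proposition by brute force: it enumerates the affine Weyl group orbit of $\Lambda-\delta$ in four families $w=(s_0s_1)^{k+1},(s_1s_0)^{k+1},(s_0s_1)^ks_0,(s_1s_0)^ks_1$, writes down the two Kleshchev bipartitions in each block explicitly, and computes $h$ on them via Hu's theorem to see $h(\lambda)\neq\lambda$ case by case. Your proof replaces almost all of this with the structural observation that $h$ sends the block $R^\Lambda(c_0\alpha_0+c_1\alpha_1)$ to $R^\Lambda(c_1\alpha_0+c_0\alpha_1)$, so that $h(\lambda)\neq\lambda$ is automatic unless $c_0=c_1$; all four of the paper's families are unbalanced (the coefficients differ by $2(k+1)$ or $2k+1$), so they are dispatched in one line, and the only balanced tame block, $R^\Lambda(\delta)$ (defect $2c=2$ forces $c=1$), is settled by checking that $h$ swaps $((1),(1))=\tilde f_1\tilde f_0\emptyset$ and $(\emptyset,(1,1))=\tilde f_0\tilde f_1\emptyset$. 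Notably, this balanced block does not appear in the paper's four families (those all have $n=|\beta|\ge 3$); it is the block of $H_2(q,1)$ covering the block $K[T_0+1,T_1+1]$ of $H_2^D(q)$, which is the symmetric Kronecker algebra and hence tame. So your dichotomy actually covers a configuration that the paper's enumeration omits (whether type $D_2$ is within the intended scope is a matter of convention, but your argument costs nothing extra). What the paper's computation buys in exchange is the explicit list of Kleshchev bipartitions in each tame block, which is what makes the later decomposition-matrix discussion concrete.

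Two caveats. First, your closing ``alternative counting argument'' is flawed and should be deleted: the premise that a tame block of type $D$ has two simple modules is not available before the proposition (it is essentially the corollary deduced from it), and it is in fact false in precisely the balanced case, since the type $D$ block covered by $R^\Lambda(\delta)$ is the Kronecker algebra, which has one simple module and whose covering block \emph{is} $\sigma$-stable; there the two simple $B$-modules restrict to the same simple $A$-module, which is consistent with Theorem \ref{restriction} but contradicts the conclusion of that counting argument. Keep the crystal computation as the proof of the balanced case. Second, both you and the paper pass without comment from ``$A$ is tame'' to ``$B$ lies among the tame (defect two) blocks of type $B$''; this step needs either \cite[Thm.~B]{Ar17} for type $D$, or the Clifford-theoretic fact that an index-two covering in characteristic not two preserves representation type. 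Since the paper makes the same implicit use of \cite{Ar17}, this is not a defect of your proposal relative to the paper, but it is worth stating explicitly.
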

\begin{proof}
As we work in the case $e=2$, we may enumerate the affine Weyl group orbits explicitly. Let $\Lambda=\Lambda_0+\Lambda_1$ and 
$\{\alpha_0,\alpha_1\}$ the simple roots, $\delta=\alpha_0+\alpha_1$ the null root. 
Then, we may prove the formulas below by induction on $k\ge0$.
\begin{align*}
\Lambda-(s_0s_1)^{k+1}\Lambda&=(k+1)(2k+3)\alpha_0+(k+1)(2k+1)\alpha_1,\\
\Lambda-(s_1s_0)^{k+1}\Lambda&=(k+1)(2k+1)\alpha_0+(k+1)(2k+3)\alpha_1,\\
\Lambda-(s_0s_1)^ks_0\Lambda&=k(2k+1)\alpha_0+(k+1)(2k+1)\alpha_1,\\
\Lambda-(s_1s_0)^ks_1\Lambda&=(k+1)(2k+1)\alpha_0+k(2k+1)\alpha_1.
\end{align*}
\begin{itemize}
\item[(i)]
Let $\beta=(k+1)(2k+3)\alpha_0+(k+1)(2k+1)\alpha_1+\delta$. Then, $B=R^\Lambda(\beta)$ and the bipartitions
\begin{align*}
\lambda_1&=((2k+1,2k,\dots,1,1,1), (2k+2,2k+1,\dots,3,2,1))\\
&=\tilde{f}_0^{4k+3}\tilde{f}_1^{4k+1}\cdots \tilde{f}_0^{3}\tilde{f}_1^{2}\tilde{f}_0\emptyset
=\tilde{f}_0^{\max}\tilde{f}_1^{\max}\cdots \tilde{f}_0^{\max}\tilde{f}_1^{2}\tilde{f}_0\emptyset,\\[5pt]
\lambda_2&=((2k+1,2k,\dots,1), (2k+2,2k+1,\dots,3,2,1,1,1))\\
&=\tilde{f}_0^{4k+3}\tilde{f}_1^{4k+1}\cdots \tilde{f}_0^{3}\tilde{f}_1\tilde{f}_0\tilde{f}_1\emptyset
=\tilde{f}_0^{\max}\tilde{f}_1^{\max}\cdots \tilde{f}_0^{\max}\tilde{f}_1\tilde{f}_0\tilde{f}_1\emptyset
\end{align*}
label irreducible $B$-modules. By transposing the first component of $\lambda_1$ and the second component of $\lambda_2$, we obtain other two bipartitions that belong 
to this block. The last bipartition that belongs to this block is the bipartition of two $2$-cores
$((2k+3,2k+2,\dots,1), (2k,2k-1,\dots,1))$. Then,
\begin{align*}
h(\lambda_1)&=\tilde{f}_1^{\max}\tilde{f}_0^{\max}\cdots \tilde{f}_1^{\max}(\emptyset,(2,1))\\
&=((2k,2k-1,\dots,1), (2k+3,2k+2,\dots,1))\ne \lambda_1,\\[5pt]
h(\lambda_2)&=\tilde{f}_1^{\max}\tilde{f}_0^{\max}\cdots \tilde{f}_1^{\max}((1), (1,1))\\
&=((2k+2,2k+1,\dots,1), (2k+1,2k,\dots,1,1))\ne \lambda_2.
\end{align*}
Thus, Theorem \ref{restriction} implies that the irreducible $B$-modules remain irreducible as $A$-modules.
\item[(ii)]
Let $\beta=(k+1)(2k+1)\alpha_0+(k+1)(2k+3)\alpha_1+\delta$. Then, Kleshchev bipartitions
\begin{align*}
\lambda_1&=((2k+2,2k+1,\dots,2,1), (2k+1,2k,\dots,2,1,1,1))\\
&=\tilde{f}_1^{4k+3}\tilde{f}_0^{4k+1}\cdots \tilde{f}_1^{3}\tilde{f}_0\tilde{f}_1\tilde{f}_0\emptyset
=\tilde{f}_1^{\max}\tilde{f}_0^{\max}\cdots \tilde{f}_1^{\max}((1),(1,1)),\\[5pt]
\lambda_2&=((2k,2k-1,\dots,2,1), (2k+3,2k+2,\dots,2,1))\\
&=\tilde{f}_1^{4k+3}\tilde{f}_0^{4k+1}\cdots \tilde{f}_1^{3}\tilde{f}_0^{2}\tilde{f}_1\emptyset
=\tilde{f}_1^{\max}\tilde{f}_0^{\max}\cdots \tilde{f}_1^{\max}(\emptyset,(2,1)),
\end{align*}
where we may also write
\[
\lambda_1=\tilde{f}_1\tilde{f}_0\tilde{f}_1^{4k+3}\tilde{f}_0^{4k+1}\cdots \tilde{f}_1^{3}\tilde{f}_0\emptyset
=\tilde{f}_1\tilde{f}_0\tilde{f}_1^{\max}\tilde{f}_0^{\max}\cdots \tilde{f}_1^{\max}\tilde{f}_0^{\max}\emptyset, 
\]
label irreducible $B$-modules and we have $h(\lambda_1)\ne\lambda_1$ and  $h(\lambda_2)\ne\lambda_2$.
\item[(iii)]
Let $\beta=k(2k+1)\alpha_0+(k+1)(2k+1)\alpha_1+\delta$. Then, Kleshchev bipartitions
\begin{align*}
\lambda_1&=((2k,2k-1,\dots,2,1,1,1), (2k+1,2k,\dots,2,1))\\
&=\tilde{f}_1^{4k+1}\tilde{f}_0^{4k-1}\cdots \tilde{f}_0^{3}\tilde{f}_1^{2}\tilde{f}_0\emptyset
=\tilde{f}_1^{\max}\tilde{f}_0^{\max}\cdots \tilde{f}_0^{\max}((1,1),(1)),\\[5pt]
\lambda_2&=((2k,2k-1,\dots,2,1), (2k+1,2k,\dots,2,1,1,1))\\
&=\tilde{f}_1^{4k+1}\tilde{f}_0^{4k-1}\cdots \tilde{f}_0^{3}\tilde{f}_1\tilde{f}_0\tilde{f}_1\emptyset
=\tilde{f}_1^{\max}\tilde{f}_0^{\max}\cdots \tilde{f}_0^{\max}(\emptyset, (1,1,1))
\end{align*}
label irreducible $B$-modules and we have $h(\lambda_1)\ne\lambda_1$ and  $h(\lambda_2)\ne\lambda_2$.
\item[(iv)]
Let $\beta=(k+1)(2k+1)\alpha_0+k(2k+1)\alpha_1+\delta$. Then, Kleshchev bipartitions
\begin{align*}
\lambda_1&=((2k+1,2k,\dots,3,2,1), (2k,2k-1,\dots,1,1,1))\\
&=\tilde{f}_0\tilde{f}_1\tilde{f}_0^{4k+1}\tilde{f}_1^{4k-1}\cdots \tilde{f}_1^{3}\tilde{f}_0\emptyset
=\tilde{f}_0\tilde{f}_1\tilde{f}_0^{\max}\tilde{f}_1^{\max}\cdots \tilde{f}_1^{\max}\tilde{f}_0^{\max}\emptyset\\
&=\tilde{f}_0^{4k+1}\tilde{f}_1^{4k-1}\cdots \tilde{f}_1^{3}\tilde{f}_0\tilde{f}_1\tilde{f}_0\emptyset
=\tilde{f}_0^{\max}\tilde{f}_1^{\max}\cdots \tilde{f}_1^{\max}((1), (1,1)),\\[5pt]
\lambda_2&=((2k-1,2k-2,\dots,1), (2k+2,2k+1,\dots,3,2,1))\\
&=\tilde{f}_0^{4k+1}\tilde{f}_1^{4k-1}\cdots \tilde{f}_1^{3}\tilde{f}_0^2\tilde{f}_1\emptyset
=\tilde{f}_0^{\max}\tilde{f}_1^{\max}\cdots \tilde{f}_1^{\max}(\emptyset, (2,1))
\end{align*}
label irreducible $B$-modules and we have $h(\lambda_1)\ne\lambda_1$ and  $h(\lambda_2)\ne\lambda_2$.
\end{itemize}
Hence, the irreducible $B$-modules do not split in all the cases.
\end{proof}

\noindent
Proposition \ref{type D indec} implies the desired result for tame block algebras of type $D$.

\begin{cor}
Any tame block algebra of type $D$ is Morita equivalent to either $A(2,2,1)$ or $A(2,1,2)$. 
\end{cor}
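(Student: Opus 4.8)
The plan is to transport the structure of the covering type $B$ block $B$ across the restriction functor, using the crossed-product description of the pair $(A,B)$. Recall that $A$ is the $\sigma$-fixed block sitting inside $B$, that conjugation by $T_0$ realizes the automorphism $\tau$, and that $T_0^2=1$; since $\operatorname{char}K\ne2$, this exhibits $B$ as the skew group algebra $A\rtimes\langle\tau\rangle$, with $\sigma$ the associated $\Z/2\Z$-grading automorphism having $B^\sigma=A$. By hypothesis $A$ is tame, so by the confirmation in \cite{Ar17} that tame block algebras are Brauer graph algebras, $A$ is a symmetric special biserial algebra; the whole problem therefore reduces to identifying its Brauer graph.

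First I would count the simple modules and record how restriction behaves. By Proposition \ref{type D indec} every simple $B$-module restricts irreducibly to $A$, and by Theorem \ref{restriction}(1) the modules $D^\lambda$ and $D^{h(\lambda)}$ restrict to isomorphic $A$-modules, with $h(\lambda)\ne\lambda$ throughout. Hence the simple $A$-modules are in bijection with the (necessarily free) $\sigma$-orbits on $\Irr(B)$, distinct orbits giving non-isomorphic simples. The simple $B$-modules exhibited in Proposition \ref{type D indec} form exactly two such orbits, $\{D^{\lambda_1},D^{h(\lambda_1)}\}$ and $\{D^{\lambda_2},D^{h(\lambda_2)}\}$, so $A$ has precisely two simple modules. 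Thus $A$ is a non-representation-finite tame Brauer graph algebra with two edges.

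It remains to determine which two-edge graph occurs, and here the real work lies. One concrete route is to compute the Cartan matrix of $A$ from the decomposition data of the type $D$ block --- which the crystal-theoretic control of Proposition \ref{type D indec} makes accessible --- and to match it against the short list of two-edge tame Brauer graph algebras; a more conceptual route is Cohen--Montgomery duality, which identifies $A$ up to Morita equivalence with the skew group algebra $B\rtimes\langle\sigma\rangle$ and so realizes the Brauer graph of $A$ by folding that of $B$ along the order-two automorphism induced by $\sigma$. Either way, because we are in quantum characteristic $e=2$ the exceptional multiplicities come out as $(2,2,1)$ or $(2,1,2)$ rather than $(2,2,2)$ (which needs $e\ge4$), while the one-edge symmetric Kronecker algebra is already excluded by the count of simples; this forces $A$ to be $A(2,2,1)$ or $A(2,1,2)$. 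The corollary only asserts that one of these two occurs, so I would not need to decide between them --- though the action of $\tau$ on the two simple $A$-modules, which is symmetric for $A(2,1,2)$ but not for $A(2,2,1)$, is what distinguishes the two cases. The main obstacle is precisely this identification step: Proposition \ref{type D indec} yields the simple modules transparently, but separating $A(2,2,1)$ and $A(2,1,2)$ from the remaining two-edge tame Brauer graph algebras requires extracting the full Cartan matrix (equivalently, the vertex multiplicities of the Brauer graph), not merely the number of edges.
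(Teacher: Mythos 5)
Your structural setup is not what actually happens in this situation, and the error propagates into the identification step that you yourself flag as ``where the real work lies'' but never carry out. You posit that the covering block $B$ is $\sigma$-stable, with $B\simeq A\rtimes\langle\tau\rangle$ and $B^{\sigma}=A$, and you propose to recover the Brauer graph of $A$ by ``folding'' that of $B$ via Cohen--Montgomery duality. The computation in Proposition \ref{type D indec} shows exactly the opposite: $h(\lambda_i)$ has its $\alpha_0$- and $\alpha_1$-contents interchanged, so $D^{h(\lambda_i)}$ lies in the block $\sigma(B)\neq B$ (the blocks of cases (i) and (ii), respectively (iii) and (iv), are swapped by $\sigma$). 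Hence $A$ is not a subalgebra of $B$ at all; rather $A\subset B\oplus\sigma(B)$ and $x\mapsto x+\sigma(x)$ is an algebra isomorphism $B\simeq A$. Your own bookkeeping betrays the inconsistency: you count $\{D^{\lambda_1},D^{h(\lambda_1)}\}$ and $\{D^{\lambda_2},D^{h(\lambda_2)}\}$ as two $\sigma$-orbits ``on $\Irr(B)$'', which would force $B$ to have four simple modules, whereas it has two ($\lambda_1,\lambda_2$ are the only Kleshchev bipartitions in the block); the orbits live in $\Irr(B\oplus\sigma(B))$. The dichotomy ``$\sigma$-stable or not'' is precisely what must be decided here --- in the genuinely $\sigma$-stable case (e.g.\ the Kronecker block $R^{\Lambda}(\delta)$) folding really does occur and produces a smaller algebra --- and your proposal decides it the wrong way. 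Note also that ``all simples restrict irreducibly'' alone does not imply Morita equivalence of $A$ and $B$: for a $\sigma$-stable block whose simples are moved by the $\sigma$-twist, every simple restricts irreducibly yet the fixed subalgebra has half as many simples. What the crystal computation really establishes is the stronger fact $\sigma(B)\neq B$.

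Once the correct picture is in place, the corollary is immediate and requires no intrinsic Brauer-graph identification: since $\sigma(B)\neq B$, Clifford theory (this is the content of Theorem \ref{restriction}(1) combined with Proposition \ref{type D indec}) makes restriction a Morita equivalence from $B$ to $A$, and the type $B$ classification already proved by tilting mutation (Theorem \ref{Thm2}(2)(a)) gives $A\sim A(2,2,1)$ or $A(2,1,2)$. Your proposal never makes this connection, and your decisive claim --- that ``because $e=2$ the multiplicities come out as $(2,2,1)$ or $(2,1,2)$ rather than $(2,2,2)$'' --- is an assertion borrowed from the type $B$ classification, which constrains type $B$ blocks only; it says nothing about the Brauer graph of a type $D$ block until one has shown that $A$ is Morita equivalent to a type $B$ block, which is exactly the missing step. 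Your fallback route (computing the Cartan matrix of $A$ from type $D$ decomposition data) is likewise not executed, and a simple count plus list-matching cannot close it: the family of two-edge Brauer graph algebras of infinite representation type is infinite (any line with multiplicities $(m_1,m_2,m_3)$ having at least two exceptional vertices, loops, double edges, \dots), so pinning down the graph needs the full Cartan matrix, i.e.\ needs decomposition numbers for the type $D$ block that are not available at this point except through the very equivalence $A\simeq B$ you have not established.
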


\section{Decomposition numbers}

The Cartan matrices for $A(2,1,2)$, $A(2,2,1)$ and $A(2,2,2)$ are $C=(\begin{smallmatrix} 3 & 1 \\ 1 & 3 \end{smallmatrix})$,  $(\begin{smallmatrix} 4 & 2 \\ 2 & 3 \end{smallmatrix})$ 
and $(\begin{smallmatrix} 4 & 2 \\ 2 & 4 \end{smallmatrix})$, respectively. 
Then the decomposition matrix $D$ is determined modulo permutation of the rows by the equation $D^TD=C$, for each of the three cases as follows. 
\[
\begin{pmatrix}
1 & 0 \\
1 & 0 \\
1 & 1 \\
0 & 1 \\
0 & 1
\end{pmatrix}
\;\;\text{for $A(2,1,2)$}, \quad
\begin{pmatrix}
1 & 0 \\
1 & 0 \\
1 & 1 \\
1 & 1 \\
0 & 1
\end{pmatrix}
\;\;\text{for $A(2,2,1)$}, \quad
\begin{pmatrix}
1 & 0 \\
1 & 0 \\
1 & 1 \\
1 & 1 \\
0 & 1 \\
0 & 1
\end{pmatrix}
\;\;\text{for $A(2,2,2)$}.
\]
The decomposition matrix given in Theorem \ref{Thm1} is determined by the same method. 

\section{An example from wild block algebras}

The main result of this paper shows that tame block algebras exhaust Morita classes in each of the derived equivalence class. 
We give an example that it is not the case for wild block algebras as we may naturally expect. 
In this section, we use left modules following the standard convention. But it is harmless 
since the opposite algebra of a cyclotomic quiver Hecke algebra is isomorphic to the original algebra. 

Let $q$ be a primitive third root of unity, and we consider the block algebra of the Hecke algebra of type $B$ with $Q=-q$ labeled by the weight 
$\Lambda-\delta$, where $\Lambda=\Lambda_0+\Lambda_1$ and $\delta=\alpha_0+\alpha_1+\alpha_2$. The algebra is a special case of the 
cyclotomic quiver Hecke algebra $R^\Lambda(\delta)$ of the Lie type $A^{(1)}_{\ell=2}$ associated with 
\[
Q_{ij}(u,v)=\begin{cases}
u+v \quad & (i,j)=(0,1), (1,2), (1,0), (2,1),\\
\lambda u+v &(i,j)=(0,2), \\
u+\lambda v &(i,j)=(2,0), \\
1 & \text{otherwise.}
\end{cases}
\]
where $\lambda$ is a nonzero parameter. The graded dimension formula, which is proven in the same way as \cite[Theorem 3.5]{AIP15}, shows that 
an idempotent generator $e(\nu)$ of $R^\Lambda(\delta)$ is nonzero if and only if $\nu$ is one of 
\[
\nu[1]=(0,2,1), \; \nu[2]=(0,1,2), \; \nu[3]=(1,0,2), \; \nu[4]=(1,2,0), 
\]
and if we denote $e(\nu[i])$ by $e_i$ then 
\begin{align*}
\dim_q e_1R^\Lambda(\delta)e_1&=1+q^2+q^4,\;\; \dim_q e_2R^\Lambda(\delta)e_1=q+q^3, \\
\dim_q e_3R^\Lambda(\delta)e_1&=q^2,\;\; \dim_q e_4R^\Lambda(\delta)e_1=0, \\[5pt]
\dim_q e_1R^\Lambda(\delta)e_2&=q+q^3,\;\; \dim_q e_2R^\Lambda(\delta)e_2=1+2q^2+q^4, \\
\dim_q e_3R^\Lambda(\delta)e_2&=q+q^3,\;\; \dim_q e_4R^\Lambda(\delta)e_2=q^2,\\[5pt]
\dim_q e_1R^\Lambda(\delta)e_3&=q^2,\;\; \dim_q e_2R^\Lambda(\delta)e_3=q+q^3, \\
\dim_q e_3R^\Lambda(\delta)e_3&=1+2q^2+q^4, \;\; \dim_q e_4R^\Lambda(\delta)e_3=q+q^3, \\[5pt]
\dim_q e_1R^\Lambda(\delta)e_4&=0,\;\; \dim_q e_2R^\Lambda(\delta)e_4=q^2, \\
\dim_q e_3R^\Lambda(\delta)e_4&=q+q^3, \;\; \dim_q e_4R^\Lambda(\delta)e_4=1+q^2+q^4. 
\end{align*}

We consider other generators $x_1,x_2,x_3$ and $\psi_1,\psi_2$. First of all, it is clear that $x_1e_i=0$, for $1\le i\le 4$. 
\begin{itemize}
\item[(1)]
We start with 
\begin{gather*}
\psi_1e_1=e(2,0,1)\psi_1=0, \;\; x_2e_1=(\lambda x_1+x_2)e_1=\psi_1^2e_1=0, \\
x_1\psi_2e_1=\psi_2x_1e_1=0,\;\; x_3\psi_2e_1=\psi_2x_2e_1=0,\\
x_1\psi_1\psi_2e_1=x_1e_4\psi_1\psi_2=0,\;\; x_2\psi_1\psi_2e_1=\psi_1x_1e_2\psi_2=0,\\
x_3\psi_1\psi_2e_1=\psi_1\psi_2x_2e_1=0.
\end{gather*}
It follows that $R^\Lambda(\delta)e_1$ is equal to
\begin{gather*}
K[x_1,x_2,x_3]\Span\{ e_1, \psi_1e_1, \psi_2e_1, \psi_1\psi_2e_1, \psi_2\psi_1e_1, \psi_1\psi_2\psi_1e_1\}\\
=K[x_3]e_1+K[x_2]e_2\psi_2e_1+Ke_3\psi_1\psi_2e_1.
\end{gather*}
Moreover, $x_2^2e_2=x_2\psi_1^2e_2=\psi_1x_1e_4\psi_1=0$ implies $x_2^2\psi_2e_1=x_2^2e_2\psi_2=0$ and 
$x_3^3e_1=x_3^2(x_2+x_3)e_1=x_3^2\psi_2^2e_1=\psi_2x_2^2e_2\psi_2=0$.\footnote{These also follow from the graded dimensions.} Hence,
\[
R^\Lambda(\delta)e_1=\Span\{ e_1, e_2\psi_2e_1, x_3e_1, e_3\psi_1\psi_2e_1, x_2e_2\psi_2e_1, x_3^2e_1\}.
\]
\item[(2)]
Using $\psi_1\psi_2e_2=e(2,0,1)\psi_1\psi_2=0$, $x_2\psi_2e_2=x_2e_1\psi_2=0$ and $x_1e_i=0$,
\begin{align*}
R^\Lambda(\delta)e_2&=K[x_1,x_2,x_3]\Span\{ e_2, \psi_1e_2, \psi_2e_2, \psi_1\psi_2e_2, \psi_2\psi_1e_2, \psi_2\psi_1\psi_2e_2\}\\
&=K[x_2,x_3]e_2+K[x_3]e_3\psi_1e_2+K[x_3]e_1\psi_2e_2+K[x_2,x_3]e_4\psi_2\psi_1e_2.
\end{align*}
Then one can show that $R^\Lambda(\delta)e_2$ is equal to
\[
\Span\{ e_2, e_1\psi_2e_2, e_3\psi_1e_2, x_2e_2, x_3e_2, e_4\psi_2\psi_1e_2, x_3e_1\psi_2e_2, x_3e_3\psi_1e_2, x_2x_3e_2 \}
\]
where $x_2x_3e_2=-x_3^2e_2$ by $x_2x_3e_2+x_3^2e_2=x_3\psi_2^2e_2=\psi_2x_2e_1\psi_2=0$ and $x_2^2e_2=x_2\psi_1^2e_2=\psi_1x_1e_3\psi_1=0$.
\item[(3)]
Nextly, $x_2e_4=\psi_1^2e_4=\psi_1e(2,1,0)\psi_1=0$ implies $x_2\psi_2e_3=x_2e_4\psi_2=0$. We also have $\psi_1\psi_2e_3=e(2,1,0)\psi_1\psi_2=0$.
It follows that 
\begin{align*}
R^\Lambda(\delta)e_3&=K[x_1,x_2,x_3]\Span\{ e_3, \psi_1e_3, \psi_2e_3, \psi_1\psi_2e_3, \psi_2\psi_1e_3, \psi_2\psi_1\psi_2e_3\}\\
&=K[x_2,x_3]e_3+K[x_3]e_2\psi_1e_3+K[x_3]e_4\psi_2e_3+K[x_2,x_3]e_1\psi_2\psi_1e_3.
\end{align*}
Thus, one can show that $R^\Lambda(\delta)e_3$ is equal to
\[
\Span\{ e_3, e_2\psi_1e_3, e_4\psi_2e_3, x_2e_3, x_3e_3, e_1\psi_2\psi_1e_3, x_3e_2\psi_1e_3, x_3e_4\psi_2e_3, x_2x_3e_3 \}
\]
where $x_2x_3e_3=-\lambda^{-1}x_3^2e_3$ by $\lambda x_2x_3e_3+x_3^2e_3=x_3\psi_2^2e_3=\psi_2x_2e_4\psi_2=0$ and $x_2^2e_3=x_2\psi_1^2e_3=\psi_1x_1e_2\psi_1=0$. 
\item[(4)]
Finally, $\psi_1e_4=0$, $x_2e_4=\psi_1^2e_4=0$ and $x_3\psi_2e_4=\psi_2x_2e_4=0$ imply
\begin{align*}
R^\Lambda(\delta)e_4&=K[x_3]e_4+K[x_2]e_3\psi_2e_4+K[x_2,x_3]e_2\psi_1\psi_2e_4 \\
&=\Span\{ e_4, e_3\psi_2e_4, x_3e_4, e_2\psi_1\psi_2e_4, x_2e_3\psi_2e_4, x_3^2e_4 \}.
\end{align*}
\end{itemize}

By the graded dimensions, the radical of $R^\Lambda(\delta)$ is spanned by elements of positive degree, and it follows that $R^\Lambda(\delta)$ is a basic algebra and 
$R^\Lambda e_i$, for $1\le i\le 4$, form a complete set of indecomposable projective $R^\Lambda(\delta)$-modules. 
Recall that the cyclotomic quiver Hecke algebra admits an anti-involution 
which fixes each of the generators. Thus, $R^\Lambda(\delta)$ is isomorphic to its opposite algebra, and it follows that we have the bounded quiver presentation of 
$R^\Lambda(\delta)$ as follows. 

\begin{lem}\label{quiver presentation}
Let $Q$ be the quiver 
\[
\begin{xy}
(0,0) *[o]+{1}="A", (10,0) *[o]+{2}="B", (20,0) *[o]+{3}="C", (30,0) *[o]+{4}="D",
\ar@ <1mm> "A";"B"^{\alpha_1}
\ar@ <1mm>  "B";"C"^{\alpha_2}
\ar@ <1mm>  "C";"D"^{\alpha_3}
\ar@ <1mm> "B";"A"^{\beta_1}
\ar@ <1mm>  "C";"B"^{\beta_2}
\ar@ <1mm>  "D";"C"^{\beta_3}
\end{xy}
\]
and let $I$ be the admissible ideal of $KQ$ which defines the relations
\begin{gather*}
\alpha_1\alpha_2\alpha_3=0,\;\; \beta_3\beta_2\beta_1=0 \\
\beta_1\alpha_1\alpha_2=\alpha_2\alpha_3\beta_3, \;\; \beta_2\beta_1\alpha_1=\alpha_3\beta_3\beta_2 \\
\alpha_1\beta_1\alpha_1=\alpha_1\alpha_2\beta_2, \;\; \beta_1\alpha_1\beta_1=\alpha_2\beta_2\beta_1 \\
\alpha_2\beta_2\alpha_2=0=\beta_2\alpha_2\beta_2 \\
\alpha_3\beta_3\alpha_3=\beta_2\alpha_2\alpha_3, \;\; \beta_3\alpha_3\beta_3=\beta_3\beta_2\alpha_2
\end{gather*}
and all the path of length greater than or equal to $5$ are set to be zero. 

If $\lambda=(-1)^{\ell+1}=-1$ and $K$ is of odd characteristic, then we have the algebra isomorphism $R^\Lambda(\delta)\simeq KQ/I$. 
\end{lem}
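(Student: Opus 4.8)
The plan is to establish the isomorphism $R^\Lambda(\delta)\simeq KQ/I$ by constructing an explicit surjective algebra homomorphism $\phi: KQ\to R^\Lambda(\delta)$ and then showing that its kernel is exactly the admissible ideal $I$, after which a dimension count finishes the argument. First I would define $\phi$ on the arrows by sending the idempotents $e_i$ of $KQ$ to the idempotent generators $e_i$ of $R^\Lambda(\delta)$ computed above, and sending the arrows $\alpha_i,\beta_i$ to suitable scalar multiples of the elements $e_{i+1}\psi\,e_i$ and $e_i\psi\,e_{i+1}$ respectively, using the $\psi_1,\psi_2$ that connect adjacent residue sequences $\nu[i]$. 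The graded-dimension computations performed in items (1)--(4) already identify explicit $K$-bases for each projective $R^\Lambda(\delta)e_i$, so one can read off which products $e_j\psi\,e_i$ are nonzero and normalize the generators so that $\phi$ is well defined.

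Next I would verify that all the relations listed in the definition of $I$ are actually satisfied in $R^\Lambda(\delta)$ under $\phi$. These fall into three groups: the vanishing relations such as $\alpha_1\alpha_2\alpha_3=0$, $\beta_3\beta_2\beta_1=0$, and $\alpha_2\beta_2\alpha_2=0=\beta_2\alpha_2\beta_2$, which follow from the idempotent vanishing facts like $\psi_1e_1=0$, $\psi_1\psi_2e_2=0$, $\psi_1e_4=0$ and the nilpotency relations $x_3^3e_1=0$, $x_2^2e_i=0$ established in the proof of the graded dimensions; the commutation-type relations $\beta_1\alpha_1\alpha_2=\alpha_2\alpha_3\beta_3$ and its companions, which encode the straightening of the polynomial generators $x_2,x_3$ against the $\psi$'s; and the relations involving the parameter $\lambda$, where the identity $x_2x_3e_3=-\lambda^{-1}x_3^2e_3$ must be balanced against $x_2x_3e_2=-x_3^2e_2$. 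It is precisely here that the hypothesis $\lambda=(-1)^{\ell+1}=-1$ enters: with $\lambda=-1$ the asymmetry between the $e_2$ and $e_3$ sides collapses, and the relations become the symmetric ones written in the statement. The assumption that $K$ has odd characteristic guarantees that $2$ is invertible, so the signs arising from these straightening relations behave as expected and the quadratic relation $Q_{02}(u,v)=\lambda u+v$ specializes consistently.

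Having checked the relations, $\phi$ factors through $KQ/I$, giving a surjection $KQ/I\twoheadrightarrow R^\Lambda(\delta)$ because the images of the arrows and idempotents generate $R^\Lambda(\delta)$ (they are the generators $x_i,\psi_i,e(\nu)$ up to the established change of variables). To conclude it is an isomorphism I would compare graded dimensions on both sides. The right-hand side has $\dim R^\Lambda(\delta)=24$ by summing the entries of the Cartan-type table $\dim_q e_jR^\Lambda(\delta)e_i$ at $q=1$, and an independent count of the monomial paths in $KQ/I$ that survive the relations (there are six per vertex, matching the six basis elements exhibited for each $R^\Lambda(\delta)e_i$ in items (1)--(4)) gives $\dim KQ/I\le 24$. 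Since a surjection between finite-dimensional spaces of equal dimension is an isomorphism, this completes the proof. I expect the main obstacle to be the second step: one must organize the straightening relations so that the $\lambda$-dependence is tracked correctly, and verify that the normalization of the $\alpha_i,\beta_i$ can be chosen globally (not just vertex by vertex) so that all the mixed relations hold simultaneously with the sign $\lambda=-1$; the bookkeeping of which $\psi$-products vanish versus which survive, inherited from the graded-dimension formula, is delicate but routine once the bases from items (1)--(4) are in hand.
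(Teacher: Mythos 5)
Your overall strategy is exactly the paper's: the paper's proof simply sets $\alpha_1=e_1\psi_2e_2$, $\alpha_2=e_2\psi_1e_3$, $\alpha_3=e_3\psi_2e_4$, $\beta_1=e_2\psi_1e_1$, $\beta_2=e_3\psi_1e_2$, $\beta_3=e_4\psi_2e_3$, asserts that the listed relations can be checked, and leaves the verification (and the implicit dimension comparison against the bases exhibited in items (1)--(4)) to the reader. Your proposal fills in the same skeleton --- surjection from $KQ$, relation check including where $\lambda=-1$ and odd characteristic enter, then a dimension count --- so methodologically there is nothing new or missing relative to the paper.

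However, your dimension bookkeeping is wrong, and since your entire final step rests on it, it has to be fixed. Summing the graded dimension table at $q=1$ gives column totals $6,9,9,6$, i.e.\ $\dim_K R^\Lambda(\delta)=30$, not $24$; equivalently, the bases exhibited in items (1)--(4) have $6$, $9$, $9$, $6$ elements respectively, not ``six per vertex'' --- look again at $R^\Lambda(\delta)e_2$ and $R^\Lambda(\delta)e_3$, each of which is $9$-dimensional (e.g.\ $\Span\{e_2, e_1\psi_2e_2, e_3\psi_1e_2, x_2e_2, x_3e_2, e_4\psi_2\psi_1e_2, x_3e_1\psi_2e_2, x_3e_3\psi_1e_2, x_2x_3e_2\}$). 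Correspondingly, on the quiver side you must count the surviving paths in $KQ/I$ and obtain the same column profile $6,9,9,6$ (these are precisely the bases of $P_1,\dots,P_4$ listed immediately after the lemma in the paper, e.g.\ $P_2=\Span\{e_2,\alpha_1,\beta_2,\alpha_2\beta_2,\beta_1\alpha_1,\beta_3\beta_2,\alpha_1\alpha_2\beta_2,\beta_2\beta_1\alpha_1,\alpha_2\alpha_3\beta_3\beta_2\}$), giving $\dim_K KQ/I\le 30$. With both totals equal to $30$ the surjection-plus-dimension argument closes as you intend; with the numbers as you wrote them it does not. One further small point: your claim that the images of the idempotents and arrows generate $R^\Lambda(\delta)$ deserves a sentence of justification --- it follows because the $x$'s are recovered from $\psi$-products (e.g.\ $\psi_2^2e_1=x_3e_1$ since $x_2e_1=0$), or more structurally because the paper has already shown $R^\Lambda(\delta)$ is basic with radical spanned by the positive-degree part, so the degree-one elements $e_j\psi_k e_i$ generate the radical modulo its square.
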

\begin{proof}
Let $\alpha_1=e_1\psi_2e_2$, $\alpha_2=e_2\psi_1e_3$, $\alpha_3=e_3\psi_2e_4$ and $\beta_1=e_2\psi_1e_1$, $\beta_2=e_3\psi_1e_2$, $\beta_3=e_4\psi_2e_3$. 
Then, we can check that the defining relations are satisified and prove the desired isomorphism. The details are left to the reader. 
\end{proof}

Let $A=KQ/I$ be the bounded quiver algebra in Lemma \ref{quiver presentation}, $P_i=R^\Lambda(\delta)e_i$, for $1\le i\le 4$. 
We shall mutate the stalk complex $P_1\oplus P_2\oplus P_3\oplus P_4$ at $P_i$, for $1\le i\le 4$. As $A$ admits 
an algebra automorphism of order $2$ which swap
\[
\alpha_1\leftrightarrow \beta_3,\;\; \alpha_2\leftrightarrow \beta_2,\;\; \alpha_3\leftrightarrow \beta_1,
\]
it suffices to consider mutations at $P_1$ and $P_2$. 

Let us start with mutation of $P_1\oplus P_2\oplus P_3\oplus P_4$ at $P_1$. Since
\begin{align*}
P_1&=\Span\{ e_1, \beta_1, \alpha_1\beta_1, \beta_2\beta_1, \alpha_2\beta_2\beta_1, \alpha_1\alpha_2\beta_2\beta_1\},\\
P_2&=\Span\{ e_2, \underline{\alpha_1}, \beta_2, \alpha_2\beta_2, \beta_1\alpha_1, \beta_3\beta_2, \underline{\alpha_1\alpha_2\beta_2}, \beta_2\beta_1\alpha_1, \alpha_2\alpha_3\beta_3\beta_2\},\\
P_3&=\Span\{ e_3, \alpha_2, \beta_3, \underline{\alpha_1\alpha_2}, \beta_2\alpha_2, \alpha_3\beta_3, \alpha_2\alpha_3\beta_3, \beta_3\beta_2\alpha_2, \alpha_3\beta_3\beta_2\alpha_2\},\\
P_4&=\Span\{ e_4, \alpha_3, \alpha_2\alpha_3, \beta_3\alpha_3, \beta_2\alpha_2\alpha_3, \beta_3\beta_2\alpha_2\alpha_3\},
\end{align*}
where the underlined elements may become a target of a morphism $P_1\to P_i$, for $i=2,3,4$, 
the right multiplication by $\alpha_1$ gives the minimal left ${\rm add}(P_2\oplus P_3\oplus P_4)$-approximation $P_1\to P_2$. Therefore, the complex 
$Q=(P_1\stackrel{\cdot \alpha_1}{\rightarrow} P_2)$ concentrated in degrees $-1$ and $0$ is the mapping cone. The next proposition shows that 
the mutated algebra cannot be a block algebra of Hecke algebras of classical type. 

\begin{prop}
The algebra $\End_{K^b(proj(A))}(Q\oplus P_2\oplus P_3\oplus P_4)^{\rm op}$ is not cellular.
\end{prop}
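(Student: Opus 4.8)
The plan is to make the endomorphism algebra $B:=\End_{K^b(proj(A))}(Q\oplus P_2\oplus P_3\oplus P_4)$ completely explicit as a bound quiver algebra, and then to read off from that presentation that $B^{\rm op}$ admits no cellular structure. Since every block algebra of a Hecke algebra of classical type is cellular (by the results of Dipper--James--Murphy and Geck cited in the introduction), this is exactly what is needed to conclude that $B^{\rm op}$, which lies in the derived equivalence class of the block $A=R^\Lambda(\delta)$, is not itself such a block algebra.

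First I would compute all morphism spaces in $K^b(proj(A))$ among the four indecomposable summands. For the three stalk complexes one simply has $\Hom_{K^b}(P_i,P_j)=e_iAe_j$, read off from the bases of the $P_i$ already listed. The summand $Q=(P_1\xrightarrow{\cdot\alpha_1}P_2)$, concentrated in degrees $-1,0$, is handled by describing chain maps modulo homotopy: a map $Q\to P_j$ is right multiplication by some $y\in e_2Ae_j$ with $\alpha_1y=0$; a map $P_i\to Q$ is right multiplication by $z\in e_iAe_2$, taken modulo $z\sim z+w\alpha_1$ with $w\in e_iAe_1$; and $\End_{K^b}(Q)$ consists of the pairs $(a,b)\in e_1Ae_1\times e_2Ae_2$ with $\alpha_1b=a\alpha_1$, modulo the homotopies $(\alpha_1w,w\alpha_1)$, $w\in e_2Ae_1$. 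Evaluating each product by means of the relations of $A$ gives the dimensions of all Hom spaces; determining which morphisms lie in $\rad^2$ then yields the Gabriel quiver and a set of relations for $B$. The salient point of this step is that, whereas in $A$ the vertices $1$ and $4$ are non‑adjacent, in $B$ one finds $\Hom_{K^b}(Q,P_4)=\Span\{\cdot\alpha_2\alpha_3\}$ is one‑dimensional and does \emph{not} lie in $\rad^2$, so that a new arrow joining $Q$ and $P_4$ appears and the quiver of $B$ becomes a four‑cycle rather than a line.

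The remaining task, and the main obstacle, is to prove that the explicit algebra $B^{\rm op}$ is not cellular. One cannot detect this by a crude numerical invariant: because $B$ is derived equivalent to the symmetric algebra $A$ it is itself symmetric (Rickard), hence its Cartan matrix $C$ is symmetric; moreover one checks directly that $C$ even factors as $C=D^{\mathrm t}D$ for a non‑negative integer matrix $D$ having the unitriangular shape required of a decomposition matrix, so the factorization and triangularity conditions alone do not obstruct cellularity. The argument must therefore use the precise relations of $B$. The route I would take is that a basic cellular algebra over an algebraically closed field carries an involutory anti‑automorphism which induces the identity permutation on its vertices (its simple modules being self‑dual), and in particular is isomorphic to its opposite algebra; so if $B^{\rm op}$ were cellular we would have $B\simeq B^{\rm op}$. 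I would then analyse the bound quiver presentation: although the four‑cycle quiver of $B$ is itself invariant under reversal, I would show that the relations produced by the mutation are not preserved by any arrow‑reversing, vertex‑fixing bijection, so that $B$ admits no algebra anti‑automorphism and $B\not\simeq B^{\rm op}$. Establishing this incompatibility of the mutated relations with every candidate anti‑automorphism is the delicate point, since the obstruction is invisible at the level of the quiver and of the Cartan data and is encoded only in the asymmetric form of the relations; should it happen that $B\simeq B^{\rm op}$ after all, one would instead have to compare the forced triangular cell‑module filtration with the genuine radical structure of the indecomposable projectives of $B$ and derive a contradiction there.
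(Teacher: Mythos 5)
Your setup (computing the $\Hom$-spaces in $K^b(proj(A))$ among $Q,P_2,P_3,P_4$ and extracting the Gabriel quiver) and your cellularity criterion (a basic cellular algebra carries an involutive anti-automorphism fixing the vertices, so its quiver must be invariant under reversing all arrows) are exactly the ingredients of the paper's proof. But your execution contains a factual error that derails the argument: the quiver of $B$ is \emph{not} ``invariant under reversal.'' You correctly identify the morphism $\gamma\colon Q\to P_4$ given by right multiplication by $\alpha_2\alpha_3$, which is not in $\rad^2$ and yields the arrow joining vertices $1$ and $4$ (in the paper's convention, $1\to4$). However, the opposite $\Hom$-space $\Hom_{K^b(proj(A))}(P_4,Q)$ is one-dimensional, spanned by the chain map given in degree $0$ by right multiplication by $\beta_3\beta_2\in e_4Ae_2$, and this map factors as $P_4 \stackrel{\cdot\beta_3}{\longrightarrow} P_3 \stackrel{\cdot\beta_2}{\longrightarrow} P_2 \stackrel{\beta_1'}{\longrightarrow} Q$, hence lies in $\rad^2$ (indeed in $\rad^3$) and gives no arrow. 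So the quiver has exactly seven arrows: pairs in both directions between $1,2$, between $2,3$ and between $3,4$, plus the \emph{single} unpaired arrow $\gamma$. This asymmetry is the paper's entire proof: since the cellular involution fixes the isomorphism classes of simple modules, the quiver of a basic cellular algebra must contain an arrow $j\to i$ for every arrow $i\to j$, and $B^{\rm op}$ fails this at the pair $\{1,4\}$.

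Because you asserted that the quiver is a reversal-symmetric four-cycle, you were forced into the much harder (and unnecessary) plan of showing that the \emph{relations} are incompatible with every vertex-fixing anti-automorphism; you leave precisely that step --- which you yourself call ``the delicate point'' --- unproven, and you add a further unexecuted fallback (comparing cell filtrations with radical structure). As written, the proposal therefore does not prove the proposition: the one concrete claim that would have decided the matter, namely the (a)symmetry of the quiver, is stated incorrectly, and the substitute argument is only a sketch. Had you computed $\Hom_{K^b(proj(A))}(P_4,Q)$ modulo $\rad^2$, your own framework would have terminated immediately in the paper's one-line conclusion.
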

\begin{proof}
Let $\alpha_1'\in \Hom_{K^b(proj(A))}(Q,P_2)$ and $\beta_1'\in \Hom_{K^b(proj(A))}(P_2,Q)$ be 
\[
\begin{array}{ccc}
P_1 & \stackrel{\cdot\alpha_1}{\longrightarrow} & P_2 \\
\downarrow & & \downarrow  \\
0 & \longrightarrow & P_2 
\end{array}
\quad\text{and}\quad
\begin{array}{ccc}
0 & \longrightarrow & P_2 \\
\downarrow & & \downarrow \\
P_1 & \stackrel{\cdot\alpha_1}{\longrightarrow} & P_2
\end{array}
\]
respectively, where the right vertical homomorphism is the right multiplication by $\beta_1\alpha_1-\alpha_2\beta_2$ for $\alpha_1'$ and the identity map for $\beta_1'$, 
and $\gamma\in \Hom_{K^b(proj(A))}(Q,P_4)$
\[
\begin{array}{ccc}
P_1 & \stackrel{\cdot\alpha_1}{\longrightarrow} & P_2 \\
\downarrow & & \downarrow  \\
0 & \longrightarrow & P_4 
\end{array}
\]
where the right vertical homomorphism is the right multiplication by $\alpha_2\alpha_3$. 
Then we can give the bounded quiver algebra presentation of the algebra
\[
B=\End_{K^b(proj(A))}(Q\oplus P_2\oplus P_3\oplus P_4)^{\rm op}. 
\]
Namely, after computing homomorphisms between $Q$ and $P_2,P_3,P_4$, 
the Gabriel quiver of $B$ is of the form
\[
\begin{xy}
(20,20) *[o]+{1}="A", (20,0) *[o]+{2}="B", (40,0) *[o]+{3}="C", (40,20) *[o]+{4}="D",
\ar@ <1mm> "A";"B"^{\alpha_1'}
\ar@ <1mm>  "B";"C"^{\alpha_2}
\ar@ <1mm>  "C";"D"^{\alpha_3}
\ar@ <1mm> "B";"A"^{\beta_1'}
\ar@ <1mm>  "C";"B"^{\beta_2}
\ar@ <1mm>  "D";"C"^{\beta_3}
\ar@ <0mm>  "A";"D"^{\gamma}
\end{xy}
\]
so that $B$ is not cellular since there does not exist an arrow $4\to 1$. 
\end{proof}

\begin{rmk}
We denote the indecomposable projective $B$-modules by $P_1', P_2', P_3', P_4'$. Then, the radical series and the socle series coincide for each $P_i'$. 
\begin{itemize}
\item[(1)]
$\Soc(P_1')=K \alpha_1'\alpha_2\beta_2\beta_1'$ and $\Rad(P_1')/\Soc(P_1')$ is of length $3$ as follows. 
\[
\begin{array}{c}
K\beta_1' \\
K\alpha_1'\beta_1'\oplus K\beta_2\beta_1'\\
K\alpha_2\beta_2\beta_1'\oplus K\beta_3\beta_2\beta_1'
\end{array}
\]

\noindent
where $\alpha_2\beta_2\beta_1'+\beta_1'\alpha_1'\beta_1'=0$, $\beta_3\beta_2\beta_1'\alpha_1'=0$, $\alpha_3\beta_3\beta_2\beta_1'=0$ and $\gamma\beta_3=\alpha_1'\alpha_2$. 
\item[(2)]
$\Soc(P_2')=K \alpha_2\alpha_3\beta_3\beta_2$ and $\Rad(P_2')/\Soc(P_2')$ is of length $3$ as follows. 
\[
\begin{array}{c}
K\alpha_1'\oplus K\beta_2 \\
K\beta_1'\alpha_1'\oplus K\alpha_2\beta_2\oplus K\beta_3\beta_2 \\
K\alpha_1'\alpha_2\beta_2\oplus K\beta_2\beta_1'\alpha_1'
\end{array}
\]

\noindent
where $\alpha_1'\beta_1'\alpha_1'+\alpha_1'\alpha_2\beta_2=0$, $\beta_2\beta_1'\alpha_1'=\alpha_3\beta_3\beta_2$ and $\beta_1'\alpha_1'\alpha_2\beta_2=\alpha_2\alpha_3\beta_3\beta_2$. 
\item[(3)]
$\Soc(P_3')=K \alpha_3\beta_3\beta_2\alpha_2$ and $\Rad(P_3')/\Soc(P_3')$ is of length $3$ as follows. 
\[
\begin{array}{c}
K\alpha_2\oplus K\beta_3 \\
K\alpha_1'\alpha_2\oplus K\beta_2\alpha_2\oplus K\alpha_3\beta_3 \\
K\beta_1'\alpha_1'\alpha_2\oplus K\beta_3\beta_2\alpha_2
\end{array}
\]

\noindent
where $\beta_1'\alpha_1'\alpha_2=\alpha_2\alpha_3\beta_3$ and $\beta_2\beta_1'\alpha_1'\alpha_2=\alpha_3\beta_3\beta_2\alpha_2$. 
\item[(4)]
$\Soc(P_4')=K \beta_3\beta_2\alpha_2\alpha_3$ and $\Rad(P_4')/\Soc(P_4')$ is of length $3$ as follows. 
\[
\begin{array}{c}
K\alpha_3\oplus K\gamma\\
K\alpha_2\alpha_3\oplus K\beta_3\alpha_3\\
K\beta_2\alpha_2\alpha_3
\end{array}
\]

\noindent
where $\beta_1'\gamma=\alpha_2\alpha_3$ and $\alpha_1'\alpha_2\alpha_3=0$. 
\end{itemize}
\end{rmk}

If we consider the mutation of the stalk complex $P_1\oplus P_2\oplus P_3\oplus P_4$ at $P_2$, the minimal left ${\rm add}(P_1\oplus P_3\oplus P_4)$-approximation is 
$P_2\rightarrow P_1\oplus P_3$ given by the right multiplication of $(\beta_1,\alpha_2)$, and we define the mapping cone to be $R$. 

We define $\alpha_1' \in \Hom_{K^b(proj(A))}(P_1,R)$, $\alpha_2' ,\gamma \in \Hom_{K^b(proj(A))}(R,P_3)$ by
\[
\begin{array}{ccc}
0 & \longrightarrow & P_1 \\
\downarrow & & \downarrow  \\
P_2 & \stackrel{\cdot(\beta_1,\alpha_2)}{\longrightarrow} & P_1\oplus P_3
\end{array}
\quad\text{and}\quad
\begin{array}{ccc}
P_2 & \stackrel{\cdot(\beta_1,\alpha_2)}{\longrightarrow} & P_1\oplus P_3 \\
\downarrow & & \downarrow \\
0 & \longrightarrow & P_3
\end{array}
\]
respectively, where the right vertical homomorphism is the right multiplication by $(e_1,0)$ for $\alpha_1'$ and 
$(\begin{smallmatrix} \alpha_1\alpha_2 \\ -\alpha_3\beta_3 \end{smallmatrix})$ for $\alpha_2'$, 
$(\begin{smallmatrix} 0 \\ -\beta_2\alpha_2 \end{smallmatrix})$ for $\gamma$. 

Similarly, we define $\beta_1' \in \Hom_{K^b(proj(A))}(R,P_1)$, $\beta_2' \in \Hom_{K^b(proj(A))}(P_3,R)$ by
\[
\begin{array}{ccc}
P_2 & \stackrel{\cdot(\beta_1, \alpha_2)}{\longrightarrow} & P_1\oplus P_3 \\
\downarrow & & \downarrow  \\
0 & \longrightarrow & P_1
\end{array}
\quad\text{and}\quad
\begin{array}{ccc}
0 & \longrightarrow & P_3 \\
\downarrow & & \downarrow \\
P_2 & \stackrel{\cdot(\beta_1,\alpha_2)}{\longrightarrow} & P_1\oplus P_3
\end{array}
\]
respectively, where the right vertical homomorphism is the right multiplication by 
$(\begin{smallmatrix} \alpha_1\beta_1 \\ -\beta_2\beta_1 \end{smallmatrix})$ for $\beta_1'$ and $(0, e_3)$ for $\beta_2'$. 
Let $C=\End_{K^b(proj(A))}(P_1\oplus R\oplus P_3\oplus P_4)^{\rm op}$. Then we can show that the Gabriel quiver is as follows. 
\[
\begin{xy}
(0,0) *[o]+{1}="A", (20,0) *[o]+{2}="B", (40,0) *[o]+{3}="C", (60,0) *[o]+{4}="D",
\ar@ <1mm> "A";"B"^{\alpha_1'}
\ar@ <1mm>  "B";"C"^{\alpha_2'}
\ar@ <1mm>  "C";"D"^{\alpha_3}
\ar@ <1mm> "B";"A"^{\beta_1'}
\ar@ <1mm>  "C";"B"^{\beta_2'}
\ar@ <1mm>  "D";"C"^{\beta_3}
\ar@(lu,ru)  "B";"C"^{\gamma}
\end{xy}
\]
Hence, we have the following. 

\begin{prop}
The algebra $\End_{K^b(proj(A))}(P_1\oplus R\oplus P_3\oplus P_4)^{\rm op}$ is not cellular.
\end{prop}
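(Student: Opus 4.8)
The plan is to repeat the strategy behind the previous proposition and derive a contradiction from the symmetry that cellularity imposes on the Gabriel quiver. Set $C=\End_{K^b(proj(A))}(P_1\oplus R\oplus P_3\oplus P_4)^{\rm op}$. Since the four summands are pairwise non-isomorphic indecomposables, $C$ is a basic algebra and its simple modules are in bijection with the vertices $1,2,3,4$ of the Gabriel quiver displayed above. Suppose, for contradiction, that $C$ is cellular, and let $\ast$ be a cellular anti-involution, so $\ast$ is a $K$-algebra anti-automorphism with $\ast^2=\id$.

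First I would use the fact that, for a cellular algebra, every simple module is self-dual with respect to the contravariant duality induced by $\ast$ (Graham--Lehrer). Hence $\ast$ fixes each isomorphism class of simple modules, i.e.\ it induces the identity permutation on the vertices of the quiver, fixing every primitive idempotent up to conjugacy. Because $\ast$ preserves the radical filtration and reverses products, it then restricts, for each pair of vertices $i,j$, to a $K$-linear isomorphism $e_j(\rad C/\rad^2 C)e_i\isom e_i(\rad C/\rad^2 C)e_j$. Comparing dimensions shows that the number of arrows $i\to j$ equals the number of arrows $j\to i$; that is, the arrow-count matrix of the Gabriel quiver of any cellular algebra is symmetric. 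This is exactly the principle that excluded cellularity in the mutation at $P_1$, where an arrow $1\to4$ had no counterpart $4\to1$.

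It then remains only to read off the asymmetry of the quiver of $C$: there are two arrows from $2$ to $3$, namely $\alpha_2'$ and $\gamma$, whereas there is a single arrow from $3$ to $2$, namely $\beta_2'$. The arrow-count matrix is therefore not symmetric, contradicting the conclusion of the previous paragraph, so $C$ admits no cellular anti-involution and is not cellular. The one step that needs genuine care, rather than routine verification, is the structural input that the cellular anti-involution induces the identity on vertices; everything downstream of the statement that this permutation is trivial is the immediate dimension count, and the double arrow $2\to3$ supplies the contradiction at once.
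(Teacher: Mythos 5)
Your proposal is correct and follows essentially the same route as the paper: the paper's proof consists of computing the Gabriel quiver of $C$ and invoking (as in the preceding proposition for the mutation at $P_1$) the principle that a cellular algebra must have a symmetric Ext-quiver, which fails here because $\alpha_2'$ and $\gamma$ give two arrows $2\to 3$ against the single arrow $\beta_2'\colon 3\to 2$. Your only addition is to spell out the Graham--Lehrer self-duality argument behind that principle, which the paper leaves implicit.
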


\begin{rmk}
We denote the indecomposable projective $C$-modules by $P_1', P_2', P_3', P_4'$ as before. Then, the module structure this time are as follows. 
\begin{itemize}
\item[(1)]
$\Soc(P_1')=K\alpha_1'\beta_1'\alpha_1'\beta_1'$ and $\Rad(P_1')/\Soc(P_1')$ is of length $3$ as follows. 
\[
\begin{array}{c}
K\beta_1' \\
K\alpha_1'\beta_1'\oplus K\beta_2'\beta_1'\\
K\beta_1'\alpha_1'\beta_1'
\end{array}
\]

\noindent
where $\gamma\beta_2'\beta_1'=0$, $\alpha_2'\beta_2'\beta_1'+\beta_1'\alpha_1'\beta_1'=0$, $\beta_3\beta_2'\beta_1'=0$ and $\beta_2'\beta_1'\alpha_1'\beta_1'=0$. 
\item[(2)]
$\Soc(P_2')= K\beta_1'\alpha_1'\beta_1'\alpha_1'$ and $\Rad(P_2')/\Soc(P_2')$ is of length $3$ as follows.
\[
\begin{array}{c}
K\alpha_1'\oplus K\beta_2' \\
K\beta_1'\alpha_1'\oplus K\alpha_2'\beta_2'\oplus K\gamma\beta_2'\oplus K\beta_3\beta_2' \\
K\beta_2'\gamma\beta_2'\oplus K\alpha_1'\beta_1'\alpha_1' \oplus K\beta_2'\alpha_2'\beta_2'
\end{array}
\]

\noindent
where $\alpha_1'\beta_1'\alpha_1'+\alpha_1'\alpha_2'\beta_2'=0$, $\beta_2'\alpha_2'\beta_2'+\alpha_3\beta_3\beta_2'=0$, 
$\beta_2'\beta_1'\alpha_1'+\beta_2'\gamma\beta_2'=0$ and $\alpha_1'\gamma=0$, 
$\alpha_2'\beta_2'\gamma\beta_2'=\beta_1'\alpha_1'\beta_1'\alpha_1'=\gamma\beta_2'\gamma\beta_2'$, 
$\alpha_2'\beta_2'\alpha_2'\beta_2'=\beta_1'\alpha_1'\beta_1'\alpha_1'=\gamma\beta_2'\alpha_2'\beta_2'$, 
$\beta_3\beta_2'\gamma\beta_2'=0$, $\beta_3\beta_2'\alpha_2'\beta_2'=0$. 
\item[(3)]
$\Soc(P_3')=K\beta_2'\gamma\alpha_3\beta_3$ and $\Rad(P_3')/\Soc(P_3')$ is of length $3$ as follows. 
\[
\begin{array}{c}
K\alpha_2'\oplus K\gamma\oplus K\beta_3 \\
K\alpha_1'\alpha_2'\oplus K\beta_2'\gamma\oplus K\beta_2'\alpha_2' \\
K\alpha_2'\beta_2'\alpha_2'\oplus K\beta_3\beta_2'\alpha_2'
\end{array}
\]

\noindent
where $\beta_2'\alpha_2'+\alpha_3\beta_3=0$, $\beta_1'\alpha_1'\alpha_2'+\alpha_2'\beta_2'\alpha_2'=0$, 
$\alpha_2'\beta_2'\gamma=\alpha_2'\beta_2'\alpha_2'=\gamma\beta_2'\alpha_2'$, 
$\beta_3\beta_2'\gamma=\beta_3\beta_2'\alpha_2'$, $\gamma\beta_2'\gamma=0$, 
$\beta_2'\gamma\alpha_3\beta_3+\beta_2'\alpha_2'\beta_2'\alpha_2'=0$, $\beta_2'\gamma\alpha_3\beta_3=\alpha_3\beta_3\beta_2'\alpha_2'$ and 
$\alpha_1'\alpha_2'\beta_2'\alpha_2'=0$. 
\item[(4)]
$\Soc(P_4')=K \beta_3\beta_2'\alpha_2'\alpha_3$ and $\Rad(P_4')/\Soc(P_4')$ is of length $3$ as follows. 
\[
\begin{array}{c}
K\alpha_3 \\
K\alpha_2'\alpha_3\oplus K\beta_3\alpha_3\\
K\beta_2'\alpha_2'\alpha_3
\end{array}
\]

\noindent
where $\gamma\alpha_3+\alpha_2'\alpha_3=0$, $\beta_2'\alpha_2'\alpha_3+\alpha_3\beta_3\alpha_3=0$, $\alpha_1'\alpha_2'\alpha_3=0$, 
$\alpha_2'\beta_2'\alpha_2'\alpha_3=0$ and $\gamma\beta_2'\alpha_2'\alpha_3=0$. 
\end{itemize}
\end{rmk}

It is an interesting question to ask whether there exists a symmetric cellular algebra which is not a block algebra in the derived equivalence class of $R^\Lambda(\delta)$. 

\bibliographystyle{amsplain}

\end{document}